\newtheorem{theorem}{Theorem}[section]
\newtheorem*{theorem*}{Theorem}
\newtheorem{lemma}[theorem]{Lemma}
\newtheorem{proposition}[theorem]{Proposition}
\newtheorem{corollary}[theorem]{Corollary}
\newtheorem{definition}[theorem]{Definition}
\newtheorem{remark}[theorem]{\it Remark\/}
\newtheorem{example}[theorem]{\it Example\/}
\newcommand{\B}[0]{\ensuremath{B}}
\newcommand{\Q}[0]{\ensuremath{Q}}
\newcommand{\RR}[0]{\ensuremath{\mathbb{R}}}
\newcommand{\ZZ}[0]{\ensuremath{\mathbb{Z}}}
\newcommand{\KK}[0]{\ensuremath{\mathbb{K}}}
\newcommand{\CC}[0]{\ensuremath{\mathbb{C}}}
\newcommand{\CAR}[2]{\ensuremath{\mathbb{C}\{#1_{1},...,#1_{#2}\}}}
\newcommand{\CalO}[2]{\ensuremath{\mathcal{O}_{#1}^{#2}}}
\newcommand{\spec}[0]{\ensuremath{\operatorname{Spec}}}
\newcommand{\Hom}[0]{\ensuremath{\operatorname{Hom}}}
\newcommand{\ord}[0]{\ensuremath{\operatorname{Ord}}}
\newcommand{\e}[0]{\ensuremath{\operatorname{e}}}
\newcommand{\E}[0]{\ensuremath{\operatorname{E}}}
\newcommand{\suppmaps}[1]{\h_{\Gamma_{+}(#1)}}
\newcommand{\Set}{\mathcal{S}et}
\newcommand{\Ysch}{Y\!-\!\mathcal{S}ch}
\newcommand{\mJet}[3]{\operatorname{#1(#2)}_{#3}}
\newcommand{\red}[1]{#1_{red}}
\newcommand{\Ver}[1]{\operatorname{Ver}(#1)}
\newcommand{\Verd}[2]{\operatorname{Ver}(#1,#2)}
\newcommand{\supp}[1]{\operatorname{Supp}(#1)}
\newcommand{\suppd}[2]{\operatorname{Supp}(#1,#2)}
\newcommand{\vol}[2]{\operatorname{Vol}_{#2}(#1)}
\newcommand{\cone}[1]{\operatorname{Cone}(#1)}
\newcommand{\rs}[1]{\operatorname{RS}(#1)}
\newcommand{\h}[0]{\ensuremath{\operatorname{h}}}
\begin{document}

\title[Simultaneous Embedded Resolutions]{Newton non-degenerate $\mu$-constant deformations admit simultaneous embedded
resolutions}

\author[M. Leyton-\'{A}lvarez]{Maximiliano Leyton-\'{A}lvarez}
\email{mleyton@utalca.cl, leyton@inst-mat.utalca.cl}  
\address{Instituto de Matem\'aticas, Universidad de Talca\\
Camino Lircay S$\backslash$N, Campus Norte, Talca, Chile}
\author[H. Mourtada]{Hussein Mourtada}
\email{hussein.mourtada@imj-prg.fr}
\address{Universit\'e de Paris, Sorbonne Universit\'e, CNRS, Institut de Math\'ematiques de Jussieu-Paris Rive Gauche, F-75013 Paris, France }

\author[M. Spivakovsky]{Mark Spivakovsky}
\email{mark.spivakovsky@math.univ-toulouse.fr}

\address{Institut de Math\'ematiques de Toulouse, UMR 5219 du CNRS,  Universit\'e Paul Sabatier, CNRS,
118 route de Narbonne,  Toulouse, France\\
and LaSol, UMI 2001, Instituto de Matem\'aticas, Unidad de Cuernavaca, Av. Universidad s/n Periferica, 62210 Cuernavaca, Morelos, Mexico}


\subjclass[2020]{14B05, 14B07, 14B25, 14E15, 14E25, 32S05, 32S10, 32S15}
\keywords{Algebraic geometry,
Singularities of algebraic varieties, Deformations of singularities,   Simultaneous embedded resolutions.
Milnor number, Newton number, m-jet spaces.}
\thanks{The first author is  partially supported by Project  ANID FONDECYT  1170743. The second author is partially supported by Projet ANR LISA, ANR-17-CE40-0023.}

\begin{abstract}

\noindent  Let $\CC^{n+1}_o$ denote the germ of $\CC^{n+1}$ at the origin. Let $V$ be a hypersurface germ in $\CC^{n+1}_o$ and $W$ a deformation of $V$ over $\CC_{o}^{m}$. Under the hypothesis that $W$ is a Newton non-degenerate deformation, in this article we will prove that  $W$ is a $\mu$-constant deformation if and only if $W$ admits a simultaneous embedded resolution. This result gives a lot of information about $W$, for example, the topological triviality of the family $W$ and the fact that the natural morphism $\red{(\mJet{W}{\CC_{o}}{m})}\rightarrow \CC_{o}$ is flat,  where $\mJet{W}{\CC_{o}}{m}$ is the relative space of
$m$-jets. On the way to the proof of our main result, we give a complete answer to a question of Arnold on the monotonicity of Newton numbers in the case of convenient Newton polyhedra.
\end{abstract}

\maketitle

\section{Introduction}

Before stating and discussing the main problem of this article we will give some brief preliminaries and introduce the notation that will be used in the article.

\subsubsection{Preliminaries on $\mu$-constant deformations}
\label{subsec:mu-def}

Let
$$
\CalO{n+1}{x}:=\CAR{x}{n+1},\quad n\geq 0,
$$
be  the $\CC$-algebra of analytic function germs at the origin $o$ of $\CC^{n+1}$ and $\CC^{n+1}_{o}$ the
complex-analytic germ of $\CC^{n+1}$. By abuse of notation we denote by $o$ the origin of $\CC^{n+1}_{o}$.  Let $V$ be a hypersurface of $\CC^{n+1}_{o}$, $n\geq 1$, given by an equation $f(x)=0$, where $f$ is irreducible in $\CalO{n+1}{x}$. Assume that $V$ has an isolated singularity at $o$.  One of the important topological invariants of the singularity $o\in V$ is the Milnor number
$\mu(f)$, defined by
$$
\mu(f):=\dim_{\CC} \CalO{n+1}{x}/J(f),
$$
where $J(f):=(\partial_1f,...,\partial_{n+1}f) \subset\CalO{n+1}{x}$ is the Jacobian ideal of $f$.  In this article we will consider deformations of $f$ that preserve the Milnor number.  Let $F\in \CC\{x_1,...,x_{n+1},s_1,...,s_m\}$ be a deformation of $f$:
\begin{center}
 $F(x,s):=f(x)+\sum\limits_{i=1}^{\infty}h_i(s)g_i(x)$
\end{center}

\noindent where $h_i\in\CalO{m}{s}:=\CAR{s}{m}$, $m\geq 1$, and  $g_i\in \CalO{n+1}{x}$ satisfy
$$
h_i(o)=g_i(o)=0.
$$
Take a sufficiently small open set $\Omega\subset\CC^{m}$ containing $o$, and representatives of the analytic function germs $h_1,...,h_i,...$ in $\Omega$. By a standard abuse of notation we will denote these representatives by the same letters $h_1,...,h_l$.  We use the notation $F_{s'}(x):=F(x,s')$ when  $s'\in \Omega $  is fixed.  We will say that the deformation $F$ is $\mu$-constant if the open set $\Omega$ can be chosen so that $\mu(F_{s'})=\mu(f)$ for all  $s'\in \Omega$.\\

Let $\mathcal{E}:=\{e_1,e_2,...,e_{n+1}\}\subset\ZZ_{\geq 0}^{n+1}$ be the standard basis of $\RR^{n+1}$. Let $g\in\CC\{x_1,...,x_{n+1}\}$ be a convergent power series. Write
$$
g(x)= \sum\limits_{\tiny \alpha\in Z }a_{\alpha}x^{\alpha},\quad Z:=\ZZ_{\geq 0}^{n+1}\setminus\{o\},
$$
in the multi-index notation.  The {\it Newton polyhedron} $\Gamma_{+}(g)$ is the convex hull of the set\linebreak
$\bigcup\limits_{\alpha\in \supp{g}}(\alpha+ \RR_{\geq 0}^n)$, where $\supp{g}$ (short for ``the support of $g$'') is defined by
$$
\supp{g}:=\{\alpha\ |\ a_{\alpha}\neq 0\}.
$$
The {\it Newton boundary} of $\Gamma_{+}(g)$, denoted by $\Gamma(g)$, is the union of the compact faces of $\Gamma_{+}(g)$. For a face $\gamma$ of $\Gamma_{+}(g)$, the polynomial  $g_{\gamma}$ is defined as follows:

$$
g_{\gamma}=\sum\limits_{\alpha\in\gamma}a_{\alpha}x^{\alpha}.
$$

We will say that $g$, is {\it non-degenerate with respect to its Newton boundary (or Newton non-degenerate)} if for every compact face $\gamma$ of the Newton polyhedron $\Gamma_{+}(g)$ the partial derivatives $\partial_{x_1}g_{\gamma},\partial_{x_2}g_{\gamma} ,...,\partial_{x_{n+1}}g_{\gamma}$ have no common zeros  in $(\CC^{\star})^{n+1}$.

We say that a deformation of $F$ of $f$ is {\it non-degenerate} if the neighborhood $\Omega$ of $o$ in $\CC^m$ can be chosen so that for all $s'\in\Omega$ the germ $F_{s'}$ is non-degenerate with respect to its Newton boundary $\Gamma(F_{s'})$. \\

We rewrite the deformation  $F$ in the form:

$$
F(x,s)= \sum\limits_{\tiny \alpha\in Z }a_{\alpha}(s)x^{\alpha},\quad Z:=\ZZ_{\geq 0}^{n+1}\setminus\{o\},
$$
and let  $\supp{F}:=\{\alpha |a_{\alpha}(s)\not\equiv 0\}$. Given a sufficiently small open set $\Omega \subset \CC^m$
containing $o$, we will say that $s'\in \Omega$ is a {\it general point of $\CC^m_{o}$} if
$$
\Gamma_{+}(F_{s'})=\Gamma_{+}(\supp{F}).
$$
Remark that
$$
\Omega \not \subset \bigcup \displaylimits_{\alpha\in\supp{F}}\{s\in\Omega\ |\ a_{\alpha}(s)=0\},
$$
and that $s'$ is general if whenever $s'$ belongs to the non-empty open set
$$
\Omega\setminus\bigcup \displaylimits_{\alpha\in \supp{F}}\{s\in\Omega|a_{\alpha}(s)=0\}.
$$
In particular, plenty of general points $s'$ exist.

\subsubsection{Preliminaries on Simultaneous Embedded Resolutions}
\label{subsec:sim-emb-res}
Let us keep the notation from the previous section. We put $S:=\CC^m_o$,  and denote by $W$ the deformation of $V$ given by $F$.  Then we have the following commutative diagram:

\begin{center}
$\xymatrix @!0 @R=1.5pc @C=2pc{
 & V \ar@{^{(}->}[rr] \ar@{->}[dd] && W \ar@{^{(}->}[rrr] \ar@{->}[dd]^{\varrho} &&&
 \CC^{n+1}_o\times S \ar@{->}[llldd] \\
  & & \square&  &&& \\
  & o\;\ar@{^{(}->}[rr] && S&&&
}$
\end{center}
where  the morphism $\varrho$ is flat. Given a sufficiently small open set $\Omega\subset\CC^{m}$ containing $o$, by a standard abuse of notation we will denote by the same letters the representatives  of $\varrho$ and $W$. ( usually we will use this abuse of notation for any representative of a germ). We use the notation $W_{s'}:=\varrho^{-1}(s')$, $s'\in \Omega$.\\

In what follows we will define what we mean by {\it Simultaneous Embedded Resolution} of $W$. We give the general definition here, even though, as explained in Remark \ref{rem:SER}, the Simultaneous Embedded Resolutions that we will construct in the main theorem are of a special type.

We consider a proper bimeromorphic morphism $\varphi:\widetilde{\CC^{n+1}_o\times S}\rightarrow \CC^{n+1}_o\times S$ such that $\widetilde{\CC^{n+1}_o\times S}$ is formally smooth over $S$, and we denote by $\widetilde{W}^{s}$ and
$\widetilde{W}^{t}$  the strict and the total transform of $W$ in $\widetilde{\CC^{n+1}_o\times S}$, respectively.

Denote by $ {\rm Exp(\varphi)}$, the exceptional fiber of $\varphi$.

\begin{definition} The morphism $\widetilde{W}^{s}\rightarrow W$ is a {\it very weak simultaneous resolution} if there exists a  sufficiently small open set $\Omega\subset\CC^{m}$ containing $o$ such that $\widetilde{W}^{s}_{s'}\rightarrow W_{s'}$ is a resolution of singularities for each $s'\in\Omega$.
\end{definition}
\begin{definition}\label{def:NCDR} We say that $\widetilde{W}^{t}$ is a {\it normal crossing divisor relative to} $S$ if
$\widetilde{W}^{t}$ is locally embedded trivial, which is to say that for each $p\in \varphi^{-1}(o,o)$ there exist sufficiently small open sets $o\in \Omega\subset\CC^{m}$, $o\in \Omega'\subset\CC^{n+1}$, $o\in \Omega''\subset\CC^{n+1}$ and  a neighborhood of $p$,
$$
U\subset\varphi^{-1}(\Omega'\times\Omega),
$$
such that there exists a map $\phi$,

\begin{center}

$\xymatrix @!0 @R=1.5pc @C=2pc{
 U \ar@{->}[rrr]^{\phi} \ar@{->}[rdd]&&&
 \Omega''\times \Omega \ar@{->}[lldd]^{pr_2} \\
   &&& \\
   & \Omega&&
}$
\end{center}
biholomorphic onto its image, such that  $\widetilde{W}^t\cap U$ is defined by the ideal $\phi^{\star}\mathcal{I}$, where
$\mathcal{I}=\left(y_{1}^{a_{1}}\cdots y_{n+1}^{a_{n+1}}\right)$, $y_1,...y_{n+1}$ is a coordinate system at $o$ in $\Omega''$, and the $a_i$ are non-negative integers.

If $p\in\widetilde{W}^{s}_{o}\cap \varphi^{-1}(o,o)$, we require that $a_{n+1}=1$ and that $\widetilde{W}^s\cap U$ be defined by the ideal $\phi^{\star}\mathcal{I}'$, where $\mathcal{I}'=(y_{n+1})$.

\end{definition}
\begin{remark} Assume that $\widetilde{W}^{t}$ is a normal crossing divisor relative to $S$. Then $\mathcal
O_{\widetilde{W}^{t}}$ is a locally free sheaf of $\CalO{m}{s}$-modules. In particular, the morphism $\widetilde{W}^{t}\rightarrow S$ is flat.
\end{remark}
\begin{definition} We will say $\varphi$ is a {\it simultaneous embedded resolution} if, in the above notation, the morphism
$\widetilde{W}^{s}\rightarrow W$ is a very weak simultaneous resolution and $\widetilde{W}^{t}$ is a normal crossing divisor relative to $S$.
\end{definition}

\begin{remark}
\label{rem:SER}
In the proof of the main result (Theorem \ref{the:mu-ND=ESR}), the construction of a simultaneous embedded resolution $\varphi$  goes as follows: first we construct an adapted toric birational proper morphism
$\pi:\widetilde{\CC^{n+1}_{o}}\rightarrow\CC^{n+1}_o$ (here $\CC^{n+1}_o$ is endowed with the natural toric structure respecting the chosen coordinates) such that ${\rm Exp(\varphi)}=\pi^{-1}(o)$. Then $\varphi$ is the product morphism which is defined by

$$
\varphi:\widetilde{\CC^{n+1}_o}\times S\rightarrow \CC^{n+1}_o\times S; (x,s)\mapsto (\pi(x),s).
$$

\end{remark}

Let us recall that $W$ is defined by

\begin{center}
 $F(x,s):=f(x)+\sum\limits_{i=1}^{\infty}h_i(s)g_i(x)$
\end{center}
where  $h_i\in \CalO{m}{s}$, $m\geq 1$, and  $g_i\in \CalO{n+1}{x}$ such that $h_i(o)=g_i(o)=0$.

Let $\epsilon>0$ (resp. $\epsilon'>0$) be small enough so that $f, g_1,...,g_l$ (resp. $h_1,...,h_l$) are defined in the open ball  $B_{\epsilon'}(o)\subset \CC^{n+1}$ (resp.  $B_{\epsilon}(o)\subset \CC^{m}$), and the singular locus of $W$ is $\{o\}\times B_{\epsilon}(o)$.  We will say that the deformation of $W$ is {\it embedded topologically trivial} (in the classical literature, one often says simply that $F$ is topologically trivial)  if, in addition, there exists a homeomorphism

$$
\xi: B_{\epsilon'}(o)\times B_{\epsilon}(o) \rightarrow  B_{\epsilon'}(o)\times B_{\epsilon}(o); (x,s)\mapsto (\lambda(x,s), s)
$$
such that  $\xi(W)=V'\times B_{\epsilon}(o)$, where $V':=\xi(V)$, that is to say, $\xi$ trivializes $W$.

The following Proposition relates simultaneous embedded resolutions, embedded topologically trivial deformations and $\mu$-constant deformations.

\begin{proposition}

\label{pro:SER->mu-const}
Let $V$ and $W$ be as above. Assume that $W$ admits a simultaneous embedded resolution such that $\rm Exp(\varphi)=\varphi^{-1}(\{o\}\times S)$. Then:
\begin{enumerate}
 \item The deformation $W$ is embedded topologically trivial.
 \item The deformation $W$ is $\mu$-constant.
\end{enumerate}

\end{proposition}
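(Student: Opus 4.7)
The plan is to prove (1) by a vector-field integration argument and then deduce (2) from (1) via the topological invariance of the Milnor number. The starting observation is that the conclusions of Definition~\ref{def:NCDR} supply, in a neighborhood of the compact set $\varphi^{-1}(o,o)$, a local product structure for the triple $(\widetilde{\CC^{n+1}_o\times S},\widetilde{W}^t,\widetilde{W}^s)$ over $S$, and the task is to globalize this structure and push it down through $\varphi$.

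For part (1), I would proceed as follows. At each point $p\in\varphi^{-1}(o,o)$, the biholomorphism $\phi\colon U\to\Omega''\times\Omega$ of Definition~\ref{def:NCDR} yields vector fields $\phi^{\star}\partial_{s_i}$ on $U$ which (a) cover $\partial_{s_i}$ on $\Omega$, (b) are tangent to $\widetilde{W}^t\cap U$ since that closed subset corresponds under $\phi$ to a cylinder over $\Omega$, and (c) are tangent to $\widetilde{W}^s\cap U$ at any $p\in\widetilde{W}^s_o\cap\varphi^{-1}(o,o)$ thanks to the extra condition $a_{n+1}=1$ that makes $\widetilde{W}^s\cap U$ itself a cylinder. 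Properness of $\varphi$ makes $\varphi^{-1}(o,o)$ compact, so a neighborhood of it is covered by finitely many such charts $\{U_\alpha,\phi_\alpha\}$; picking a smooth partition of unity $\{\rho_\alpha\}$ subordinate to this cover, the globally defined vector fields $X_i:=\sum_\alpha\rho_\alpha\phi_\alpha^{\star}\partial_{s_i}$ still project to $\partial_{s_i}$ on $S$ and remain tangent to $\widetilde{W}^t$ and $\widetilde{W}^s$ (tangency is a linear condition). Integrating the commuting family $X_1,\dots,X_m$ up to a small polydisc $\Omega\subset S$ yields a homeomorphism $\widetilde{\xi}\colon\widetilde{X}_o\times\Omega\to\widetilde{X}$ which is fiber-preserving over $\Omega$ and which sends $\widetilde{W}^t_o\times\Omega$ onto $\widetilde{W}^t$ and $\widetilde{W}^s_o\times\Omega$ onto $\widetilde{W}^s$.

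To obtain the desired trivialization $\xi$ downstairs I would push $\widetilde{\xi}$ through $\varphi$. Away from the exceptional locus $\varphi$ is a biholomorphism onto its image, so the push-down is immediate there; along the exceptional locus one must check that $\widetilde{\xi}$ respects the fibers of $\varphi$, i.e.\ carries any exceptional fiber in $\widetilde{X}_o$ to an exceptional fiber in $\widetilde{X}_{s'}$. This is the main obstacle, and it is handled by choosing the charts $\phi_\alpha$ so that each exceptional component of $\widetilde{W}^t$ (being a component of the product divisor $y_1^{a_1}\cdots y_{n+1}^{a_{n+1}}=0$ in $\Omega''\times\Omega$) is itself a cylinder over $\Omega$; since $X_i$ is then tangent to each exceptional component, the flow matches $\varphi$-fibers with $\varphi$-fibers. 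Setting $\xi:=\varphi\circ\widetilde{\xi}\circ(\varphi|_{\widetilde{X}_o})^{-1}$ and using properness of $\varphi$ to guarantee continuity of $\xi^{-1}$, one obtains an embedded topological trivialization of $W$ over $B_{\epsilon}(o)$, as required.

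For part (2), the homeomorphism $\xi$ of part (1) identifies each embedded germ $(B_{\epsilon'}(o),W_{s'},o)$ with $(B_{\epsilon'}(o),V,o)$. The Milnor fibration of $F_{s'}$ at $o$ is determined, up to homeomorphism, by this embedded topological type, and the Milnor number $\mu(F_{s'})$ is the rank of the middle homology of the Milnor fiber, hence a topological invariant of the embedded germ. Therefore $\mu(F_{s'})=\mu(f)$ for every $s'\in\Omega$, proving that $W$ is a $\mu$-constant deformation.
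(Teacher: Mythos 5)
Your proposal is essentially the paper's own proof: the paper likewise takes the local relative product charts of Definition~\ref{def:NCDR}, uses compactness of $\varphi^{-1}(o,o)$ and a partition of unity to glue them into a trivialization upstairs (there realized as two diffeomorphisms onto an auxiliary manifold $M$ rather than by integrating lifted vector fields), pushes the result down through $\varphi$, and deduces (2) from the topological invariance of $\mu$. One small caution: the patched fields $X_i=\sum_\alpha\rho_\alpha\phi_\alpha^{\star}\partial_{s_i}$ need not commute, but this is harmless since each projects to $\partial_{s_i}$ and one can integrate them successively in a fixed order; beyond that, your treatment of the $\varphi$-fiber compatibility of the descent is at the same level of detail as (indeed slightly more explicit than) the paper's.
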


\begin{proof} The Milnor number $\mu$ is a topological invariant, hence $(1)$ implies $(2)$, see Theorem $1.4$ of \cite{Tei73}\\

 We will apply Thom’s first isotopy lemma (see \cite{Mat12}, proposition 2.11) to a closed neighborhood $C$ (that we describe below) of the compact set $\varphi^{-1}(o,o)$ and to the restriction of $\varphi$ to $C.$

We begin by describing $C$ and then we show that the hypotheses of the lemma are satisfied.\\

As $W$ admits a simultaneous embedded resolution, there exists a proper bimeromorphic morphism
$\varphi:\widetilde{\CC^{n+1}_0\times S}\rightarrow \CC^{n+1}_0\times S$ such that $\widetilde{\CC^{n+1}_0\times S}$ is formally smooth over $S$, and $\widetilde{W}^t$ is a normal crossing divisor relative to $S$.  By Definition \ref{def:NCDR}, we have that for each  $p\in \varphi^{-1}(o,o)$ there exist sufficiently small $\epsilon, \epsilon',\epsilon''>0$, and a map $\phi_p$ biholomorphic onto its image

$$
\xymatrix @!0 @R=1.5pc @C=2pc{
 U_p \ar@{->}[rrrr]^{\phi_p} \ar@{->}[rrdd]&&&&
 B_{\epsilon''}(o)\times B_{\epsilon}(o) \ar@{->}[lldd]^{pr_2} \\
   &&&& \\
   && \B_{\epsilon}(o) &&
}
$$
that trivializes $\widetilde{W}^t\cap U_p$, where  $U_p\subset \varphi^{-1}(B_{\epsilon'}(o)\times B_{\epsilon}(o))$ is a neighborhood of $p$. Without loss of generality, we assume that $\phi_p$ is bijective.

As $\varphi^{-1}(o,o)$ is a compact set, there exists a finite set of points $\{ p_1,...,p_l\}\subset \varphi^{-1}(o,o)$ such that $\varphi^{-1}(o,o)\subset \Omega=\bigcup_{1}^l U_{p_i}$.   Moreover we may assume that $\epsilon$, $\epsilon'$ do not depend on $p_i$, that  $\Omega \subset \varphi^{-1}(B_{\epsilon'}(o)\times B_{\epsilon}(o))$, and
using at most a homothetic transformation that $\epsilon''$ does not depend on $p$.\\

The open set $\Omega$ is an open neighborhood of $\varphi^{-1}(o,o)$, and there exist $\epsilon_0'>0$ and $\epsilon_0>0$ such that $\varphi^{-1}(B_{\epsilon'_0}(o)\times B_{\epsilon_0}(o))\subset \Omega$.  Indeed, if this was not true, there would exist a sequence $x_n\in \varphi^{-1}(B_{\epsilon'/n}(o)\times B_{\epsilon/n}(o))$ such that $x_n\not \in \Omega$ for all $n>0$.  The morphism $\varphi$ is proper, hence $\varphi^{-1}(\overline{B_{\epsilon'/n}(o)}\times \overline{B_{\epsilon/n'}(o)})$ is a compact set. We may assume that the sequence $x_n$ converges to a point $q\in \varphi^{-1}(o,o)$ (because $\varphi(x_n)$ converge to $(o,o))$. Then there exists $n_0\in\mathbb N$ such that  $x_n\in  \Omega$ for all $n\geq n_0$, which is a contradiction.  Note that we can also assume that $\varphi^{-1}(\overline{B_{\epsilon'_0}(o)}\times B_{\epsilon_0}(o))\subset \Omega$.\\
Now, it is well known that there exists  $\epsilon'_1>0$ small enough such that for all $0<\delta\leq \epsilon'_1$ the hypersurface V intersects the $(2n+1)$-sphere $S_{\delta}(o):=\partial \overline{B_{\delta}(o)}$ transversally (see \cite{Mil68}). And there exists $\epsilon_1>0$ small enough so that the hypersurface $W_s$ intersects the $2n+1$-sphere $S_{\delta}(o)$ transversally for all $s\in B_{\epsilon_1}(o)\subset \CC^m.$

Without loss of generality, we assume that $\epsilon=\epsilon_0=\epsilon_1$ and $\epsilon'_0=\epsilon'_1$  (we can replace
$\epsilon$  by $\epsilon_0$ in the defintion of $\Omega$). The set
$C:=\varphi^{-1}(\overline{B_{\epsilon'_0}}(o)\times B_{\epsilon}(o))$ is a closed set of $\Omega$.\\

Now we will verify the hypotheses of Thom’s first isotopy lemma.\\

\begin{enumerate}
  \item The morphisms
  $$
  \varphi|_C:C\rightarrow \overline{B_{\epsilon'_0}}(o)\times B_{\epsilon}(o)
  $$
  and
  $$
  pr_2:\overline{B_{\epsilon'_0}(o)}\times B_{\epsilon}(o)\rightarrow  B_{\epsilon}(o)
  $$
  are proper, hence so is $\psi:=pr_2\circ\varphi|_C$.

\item  As each  intersection of  $(C\cap W^t)\cup \varphi^{-1}(S_{\epsilon'_o}(o)\times B_{\epsilon}(o))$ is transverse, the set  $(C\cap W^t)\cup \varphi^{-1}(S_{\epsilon'_o}(o)\times B_{\epsilon}(o))$ induces a Whitney stratification of $C$ (obtained by first considering the complement in $(C\cap W^t)\cup \varphi^{-1}(S_{\epsilon'_o}(o)\times B_{\epsilon}(o))$ and then by the natural stratification of  $(C\cap W^t)\cup \varphi^{-1}(S_{\epsilon'_o}(o)\times B_{\epsilon}(o))$ which is a union of manifolds intersecting transversally). Moreover, as $\widetilde{\CC^{n+1}_0\times S}$ is formally smooth over $S$, on each stratum $X$ of $C$ the morphism $\psi |_X$ is smooth.

\item Observe that, by construction, for each stratum $X$ of $C$ and each $q\in X$ there exists a section $r$ of $\psi$ such that $r(\psi|_{X}(q))=q:$

 $$
 \xymatrix @!0 @R=1.5pc @C=2pc{
&X \ar@{->}[dd]^{\psi|_{X}} \\
   &\\
   &\ar@/^0.4cm/[uu]^{r} \B_{\epsilon}(o) }
   $$

hence, $\psi|_{X}:X\rightarrow B_{\epsilon}(o)$ is a submersive map.
 \end{enumerate}

Let $C_0:=C\cap \psi^{-1}(o)$ and $X_0:=X\cap \psi^{-1}(o)$, where $X$ is a stratum of $C$. Thom’s first isotopy lemma assures us that there exists $\epsilon>0$ small enough and a homeomorphism

$$
\xymatrix @!0 @R=1.5pc @C=2pc{
 C \ar@{->}[rrrr]^{\xi_0} \ar@{->}[rrdd]&&&&
 C_0\times B_{\epsilon}(o) \ar@{->}[lldd]^{pr_2} \\
   &&&& \\
   && \B_{\epsilon}(o) &&
}
$$
such that $\xi_o(X)=X_o\times \B_{\epsilon}(o)$, see Proposition $11.1$ and Corollary $10.3$ of \cite{Mat12}.
Then the morphism $\xi_0$ trivializes simultaneously

\begin{center}
 $
C^{\circ}:=\varphi^{-1}(B_{\epsilon'_0}(o)\times B_{\epsilon}(o))
$ and $C^{\circ}\cap W^t$.
\end{center}

We denote by $\varphi_o$ the morphism obtained by restricting $\varphi$ to the special fiber.

\begin{center}
$\xymatrix @!0 @R=1.5pc @C=2pc{
  \widetilde{\CC^{n+1}_o} \ar@{->}[dd]^{\varphi_0}\ar@{^{(}->}[rrr]&&&  \widetilde{\CC^{n+1}_0\times S}\ar@{->}[dd]^{\varphi}\\
   && &  \\
   \CC^{n+1}_o\;\ar@{^{(}->}[rrr] &&& \CC^{n+1}_0\times S}$
\end{center}
Consider the morphism
$$
\varphi': \widetilde{\CC^{n+1}_o}\times S\rightarrow \CC^{n+1}_o\times S; (x,s)\mapsto (\varphi_0(x),s).
$$
Then for small enough $\epsilon'_0$ and $\epsilon$, the map

 $$
 \xi: B_{\epsilon'_0}(o)\times B_{\epsilon}(o) \rightarrow
 B_{\epsilon'_0}(o)\times B_{\epsilon}(o); (x,s)\mapsto \varphi'\circ\xi_0\circ \varphi^{-1}(x,s)
$$
is  the desired trivialization.
\end{proof}

\subsubsection{On the main result of the article}

Keep the notation of the previous sections. Recall that $W$ is a deformation of $V$ over $S:=\CC^m_o$ given by $F$.  In the article \cite{Oka89} the author proves that if $W$  is a non-degenerate $\mu$-constant deformation of $V$ that induces a negligible truncation of the Newton boundary then $W$ admits a very weak simultaneous resolution. However if the method of proof used is observed with detail, what is really proved is that $W$ admits a simultaneous embedded resolution in the special case when

 $$F(x,s):=f(x)+sx^{\alpha}\in \CC\{x_1,x_2,x_3,s\}.$$

Intuitively one might think that the condition that $W$ admit a simultaneous embedded resolution is more restrictive than the condition that $W$ is a $\mu$-constant deformation. However, this intuition is wrong in the case of Newton non-degenerate $\mu$-constant deformations.  More precisely, in this article we prove the following result:

\begin{theorem*}  Assume that $W$ is a Newton non-degenerate deformation. Then the deformation $W$ is
$\mu$-constant if and only if $W$ admits a simultaneous embedded resolution.
\end{theorem*}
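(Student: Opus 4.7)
The implication ``$W$ admits a simultaneous embedded resolution $\Rightarrow$ $W$ is $\mu$-constant'' is Proposition~\ref{pro:SER->mu-const}, so the task reduces to the converse. My strategy is to construct a single toric modification $\varphi_{0}:X \to \CC^{n+1}_{o}$ determined by the Newton polyhedron of $f$ and take its trivial extension over $S$ as the candidate simultaneous resolution. Three ingredients are needed: Varchenko's theorem $\mu(g)=\nu(\Gamma_{+}(g))$ for a Newton non-degenerate convenient $g$; the embedded resolution of such $g$ by a regular simplicial refinement of the dual fan of $\Gamma_{+}(g)$; and a sharp monotonicity statement for Newton numbers, whose proof in the convenient case is the combinatorial contribution advertised in the abstract.

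The key preparatory step is to show that $\Gamma_{+}(F_{s'})$ is independent of $s'$ in a small neighbourhood $\Omega$ of $o$. First I would reduce to the convenient case: adding $x_{1}^{N}+\cdots+x_{n+1}^{N}$ with $N$ sufficiently large to $f$ and $F$ preserves $\mu$, Newton non-degeneracy, and $\mu$-constancy of the family while making every $\Gamma_{+}(F_{s'})$ convenient. Since $F(x,s)=f(x)+\sum h_{i}(s)g_{i}(x)$ with $h_{i}(o)=0$, one has $\supp{f}\subseteq\supp{F_{s'}}$ and hence $\Gamma_{+}(F_{s'})\supseteq\Gamma_{+}(f)$ for $s'\in\Omega$. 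By Arnold's monotonicity, $\nu(\Gamma_{+}(F_{s'}))\le\nu(\Gamma_{+}(f))$; combined with Varchenko's formula and the $\mu$-constancy hypothesis, equality holds. Analysing the equality case should yield that every activated monomial $g_{i}$ with $h_{i}(s')\ne 0$ has support already inside $\Gamma_{+}(f)$, so that $\Gamma_{+}(F_{s'})=\Gamma_{+}(f)$ throughout $\Omega$ and the compact Newton-boundary faces coincide.

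With this rigidity in hand, fix a regular simplicial fan $\Sigma^{*}$ refining the dual fan of $\Gamma_{+}(f)$ in $\RR^{n+1}_{\ge 0}$, let $\varphi_{0}:X_{\Sigma^{*}}\to\CC^{n+1}_{o}$ be the associated toric morphism, and set $\varphi:=\varphi_{0}\times\operatorname{id}_{S}$. The source is smooth over $S$, so formal smoothness and the flatness requirement of Definition~\ref{def:NCDR}(1) are automatic. In each affine chart of $X_{\Sigma^{*}}$, with coordinates $y=(y_{1},\ldots,y_{n+1})$ corresponding to a regular cone, the pullback takes the form $\varphi^{*}F(x,s)=y^{\beta}\,\widetilde{F}(y,s)$, and by the previous step the monomial $y^{\beta}$ (read off from the vertex of $\Gamma(f)$ dual to the cone) is $s$-independent. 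Fibrewise, Newton non-degeneracy of $F_{s'}$ together with the adaptation of $\Sigma^{*}$ to $\Gamma_{+}(F_{s'})=\Gamma_{+}(f)$ yields an embedded resolution of $V_{s'}$, so $\widetilde{W}^{s}\to W$ is a very weak simultaneous resolution. Off the strict transform, $\widetilde{W}^{t}$ is cut out by the monomial $y^{\beta}$, manifestly a product with $S$. On the strict transform, Newton non-degeneracy of the compact face of $\Gamma(f)$ dual to the cone combined with the implicit function theorem with parameter $s$ produces a local biholomorphism $\phi:(y,s)\mapsto(\Phi(y,s),s)$ after which the strict transform is cut out by a single coordinate, delivering the local product structure of Definition~\ref{def:NCDR}(2) and, in particular, $a_{n+1}=1$ on the strict transform.

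The main obstacle is Step~2, the rigidity of $\Gamma_{+}(F_{s'})$: this requires both proving the sharp form of Arnold's monotonicity for convenient polyhedra and extracting from equality the structural conclusion that no new support points lie outside $\Gamma_{+}(f)$. Once this rigidity is secured, the rest of the proof is essentially a parametric version of the standard Newton non-degenerate toric resolution, since both the toric data $(\Sigma^{*},y^{\beta})$ and the non-degenerate factorisations at compact faces turn out to be independent of $s$.
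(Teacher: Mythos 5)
Your reduction hinges on a rigidity claim that is false: from $\nu(F_{s'})=\nu(f)$ together with $\Gamma_{+}(f)\subseteq\Gamma_{+}(F_{s'})$ you cannot conclude $\Gamma_{+}(F_{s'})=\Gamma_{+}(f)$. Equality of Newton numbers for nested convenient polyhedra does not force the polyhedra to coincide; it only constrains where the new vertices sit and how they are attached. The paper's own running example, the convenient Brian\c{c}on--Speder family $F=x^{5}+y^{7}z+z^{15}+y^{8}+sxy^{6}$, is a Newton non-degenerate $\mu$-constant deformation in which the monomial $xy^{6}$, with exponent $(1,6,0)$, lies strictly outside $\Gamma_{+}(f)$, so $\Gamma_{+}(F_{s'})\supsetneqq\Gamma_{+}(f)$ for $s'\neq 0$ while $\nu$ is unchanged. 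What equality of Newton numbers actually gives is Proposition \ref{prop:verS'S} (new vertices lie off the open positive orthant) and, crucially, Theorem \ref{the:car-nu-constant}: every vertex of $\Gamma(F_{s})$ not belonging to $\Gamma(f)$ has a good apex. The case $\Gamma_{+}(F_{s})=\Gamma_{+}(f)$, which is the only case your construction covers, is dispatched in the paper in a single paragraph (a regular admissible subdivision of the common dual fan works, by the classical non-degenerate toric resolution); the entire difficulty of the theorem is the complementary case.

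Consequently your Step 3 also breaks down: the fan must be adapted to $\Gamma_{+}(F_{s})$ for the generic $s$, not to $\Gamma_{+}(f)$, and then the issue is that the special fibre $f$ is not "non-degenerate with respect to" the larger polyhedron in any useful sense, so the pullback $\varphi^{*}F=y^{\beta}\widetilde F(y,s)$ need not have $\widetilde F(\cdot,0)$ defining a smooth divisor with normal crossings in the charts over the new vertices; the monomial exponents $\beta$ there genuinely depend on the deformed polyhedron. The paper's Proposition \ref{prop:mu-ND=ESR} is exactly the replacement for this step: near each new vertex $\alpha$ it builds cones of the special form $\operatorname{Cone}(q_{1},\dots,q_{n},e_{i})$, where $e_{i}$ is singled out by the good apex of $\alpha$, proves these cones are automatically regular using the apex relation $\langle\alpha,q_{j}\rangle=\langle\beta,q_{j}\rangle$, and then rules out singular points of $\overline F(y,0)$ in those charts by an auxiliary deformation argument that falls back on the non-degeneracy of $f$ at the neighbouring (old) vertices. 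Without the good-apex combinatorics (Theorem \ref{the:car-nu-constant}, the answer to the Arnold/BKW question) and this chart-by-chart analysis, the parametric toric argument you outline does not go through.
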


Observe that if $W$ admits a simultaneous embedded resolution it follows directly from Proposition \ref{pro:SER->mu-const} that $W$ is a $\mu$-constant deformation. The converse of this is what needs to be proved.\\

From the above theorem and  Proposition \ref{pro:SER->mu-const} we obtain the following corollary.

\begin{corollary}
Let $W$ be a Newton non-degenerate $\mu$-constant deformation. Then $W$ is topologically trivial.
\end{corollary}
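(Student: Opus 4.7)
The plan is to obtain this corollary as an immediate consequence of the Theorem stated just above it, combined with Proposition \ref{pro:SER->mu-const}. There is essentially nothing new to prove here: the corollary is a formal chaining of the two previously established implications.

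First, I would invoke the Theorem: since by hypothesis $W$ is Newton non-degenerate and $\mu$-constant, the ``only if'' direction of the Theorem applies and yields that $W$ admits a simultaneous embedded resolution. Concretely, there exists a proper bimeromorphic morphism $\varphi:\widetilde{\CC^{n+1}_o\times S}\rightarrow \CC^{n+1}_o\times S$ with $\widetilde{\CC^{n+1}_o\times S}$ formally smooth over $S$, such that the strict transform $\widetilde{W}^s\to W$ is a very weak simultaneous resolution and the total transform $\widetilde{W}^t$ is a normal crossing divisor relative to $S$ in the sense of Definition \ref{def:NCDR}.

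Second, I would feed this simultaneous embedded resolution into Proposition \ref{pro:SER->mu-const}(1). That proposition says precisely that whenever $W$ admits a simultaneous embedded resolution, the deformation $W$ is embedded topologically trivial, meaning there is a homeomorphism $\xi$ of a product neighborhood $B_{\epsilon'}(o)\times B_{\epsilon}(o)$ commuting with the second projection and mapping $W$ to $V'\times B_{\epsilon}(o)$ for $V'=\xi(V)$. This is exactly the statement that $W$ is topologically trivial in the sense fixed earlier in the introduction, so the corollary follows.

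There is no real obstacle: the entire content of the corollary is already contained in the Theorem and in Proposition \ref{pro:SER->mu-const}(1), and the only thing to verify is that the hypotheses match, which they do verbatim. The only subtlety worth flagging is the (purely linguistic) distinction between ``embedded topological triviality'' and ``topological triviality'': since the homeomorphism $\xi$ produced by the proposition commutes with $pr_2$ and sends $W$ onto a product $V'\times B_{\epsilon}(o)$, the family $\{W_{s'}\}_{s'\in B_{\epsilon}(o)}$ is in particular topologically trivial in the usual sense, so the statement of the corollary is justified.
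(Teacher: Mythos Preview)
Your proposal is correct and matches the paper's approach exactly: the corollary is stated as an immediate consequence of the main Theorem together with Proposition~\ref{pro:SER->mu-const}, and no separate proof is given in the paper. Your observation about the terminology ``embedded topologically trivial'' versus ``topologically trivial'' is a reasonable clarification, but the paper treats them as synonymous in this context.
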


The  result of the corollary was already obtained in Theorem $1.1$ of \cite{Abd16}.
\medskip

It was pointed out to us by the referee that Corollary 1.5 follows from the following two known statements:
\begin{enumerate}
\item every small Newton-non-degenerate deformation is a pullback from a linear one (that is, a deformation of  type $f(x) + sg(x)$)
\item every $\mu$-constant family of isolated hypersurface singularities of type $f(x) + sg(x)$, is topologically trivial; this is a result of A. Parusinki (Corollary $2.1$ of \cite{Par99}).
\end{enumerate}

In the general case, for $n\neq 2$ it is known that if $W$ is a $\mu$-constant deformation, then the deformation $W$ is topologically trivial, (see \cite{LDRa76}). The case $n=2$ is a conjecture (the L\^e--Ramanujan conjecture).

Beyond this article, the main result here initiates a new approach to the L\^e-Ramanujam conjecture. In characteristic $0$ every singularity can be embedded in a higher dimensional affine space in such a way that it is Newton non-degenerate in the sense of Khovanskii or Sch\"on (this is a possible reading of a result of Tevelev, answering a question of Teissier, see \cite{Tei14}, \cite{Tev14} and \cite{Mou17}). Note that  Sch\"on (Newton non-degenerate in the sense of Khovanskii) is the notion that generalizes Newton non-degenerate singularities to higher codimensions, and guarantees the existence of embedded toric resolutions for singularities having this property. For example,  the plane curve singularity $(\mathcal{C},o)$ embedded in $\CC^{2}_{o}$ via the equation $(x_2^2-x_1^3)^2-x_1^5x_2=0$  is  degenerate with respect to its Newton polygon; but  embedded in $\CC^{3}_{o}$ via the equations $x_3-(x_2^2-x_1^3)=0$ and $x_3^2 -x_1^5x_2=0,$ it is non-degenerate in the sense of Khovanskii \cite{Mou17,BA07,Ngu20}. Now by \cite{Ngu20} (see also \cite{BA07}), we can compute its Milnor number using mixed Newton numbers. Then the idea is to study the monotonicity of the mixed Newton number  and to prove a generalization of Theorems \ref{the:semicont-newton} and \ref{the:car-nu-constant}. This should allow us to generalize the main theorem of this article for an adapted embedding and then to apply the first part of Proposition \ref{pro:SER->mu-const}.  This idea is a research project that, while not developed in the rest of this paper, we nevertheless find important to mention.\\

The theorem  has an interesting implication to spaces of $m$-jets.  Let $\KK$ be a field and $Y$ a scheme over $\KK$.  We denote by $\Ysch$ (resp. $\Set$)  the category of schemes over $Y$ (resp. sets), and  let $X$ be a $Y$-scheme.  It is known that the functor $\Ysch\rightarrow \Set:\;Z\mapsto\Hom_Y(Z\times_{\KK}\spec \KK[t]/(t^{m+1}), X)$,  $m\geq 1$, is representable.  More precisely, there exists a $Y$-scheme, denoted by $\mJet{X}{Y}{m}$, such that $\Hom_Y(Z\times_{\KK}\spec \KK[t]/(t^{m+1}), X)\cong \Hom_{Y}(Z,\mJet{X}{Y}{m})$ for all  $Z$ in $\Ysch$.  The scheme $\mJet{X}{Y}{m}$  is called the {\it space of m-jets of $X$ relative to $Y$}.  For more details see \cite{Voj07} or \cite{Ley18}. Let us assume that $Y$ is a reduced $\KK$-scheme, and let $Z$ be a $Y$-scheme. We denote by $\red{Z}$ the reduced $Y$-scheme associated to $Z$.

\begin{corollary} Let $S=\CC_{0}$ and let $W$ be a non-degenerate $\mu$-constant deformation.  The structure morphism $\red{(\mJet{W}{S}{m})}\rightarrow S$ is flat for all $m\geq 1$.
\end{corollary}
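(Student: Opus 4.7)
The strategy is to exploit the simultaneous embedded resolution provided by the main Theorem. Since $W$ is Newton non-degenerate and $\mu$-constant, there exists a proper bimeromorphic morphism $\varphi:\widetilde{\CC^{n+1}_o\times S}\to\CC^{n+1}_o\times S$ such that $\widetilde{\CC^{n+1}_o\times S}$ is formally smooth over $S$ and the total transform $\widetilde{W}^{t}$ is a normal crossing divisor relative to $S$. My plan is to transfer flatness from $\red{(\mJet{\widetilde{W}^{t}}{S}{m})}$, where it is essentially manifest, along the induced map to $\red{(\mJet{W}{S}{m})}$.

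First I would check that $\red{(\mJet{\widetilde{W}^{t}}{S}{m})}$ is flat over $S$. By Definition~\ref{def:NCDR}(2), around every point of $\varphi^{-1}(o,o)$ there is a biholomorphism $\phi:U\to\Omega''\times\Omega$ that identifies $\widetilde{W}^{t}\cap U$ with $\{y_1^{a_1}\cdots y_{n+1}^{a_{n+1}}=0\}\times\Omega$, a product of an $s$-independent hypersurface with $\Omega\subset S$. Since a relative $m$-jet over $S$ keeps the $s$-coordinate constant, this product structure passes to the jet space: $\mJet{\widetilde{W}^{t}\cap U}{\Omega}{m}\cong \mJet{\{y_1^{a_1}\cdots y_{n+1}^{a_{n+1}}=0\}}{}{m}\times\Omega$, which is locally trivial and hence flat over $\Omega$. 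Smoothness of $S$ guarantees that this local product structure survives reduction.

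Next I would show that the morphism $\varphi_{m}:\mJet{\widetilde{W}^{t}}{S}{m}\to\mJet{W}{S}{m}$ induced by $\varphi|_{\widetilde{W}^{t}}$ is surjective. By properness of $\varphi$, $\varphi_m$ is closed, so it suffices to verify surjectivity on closed points. Given an $m$-jet $\gamma_m:\spec\CC[t]/(t^{m+1})\to W_{s_0}$, I would extend it to an arc $\gamma:\spec\CC[[t]]\to W_{s_0}$ (possible since $W_{s_0}$ is Noetherian). The valuative criterion applied to the proper map $\varphi$ lifts $\gamma$ to an arc $\widetilde{\gamma}:\spec\CC[[t]]\to\widetilde{\CC^{n+1}_o\times S}$, which necessarily factors through $\widetilde{W}^{t}=\varphi^{-1}(W)$; truncating $\widetilde{\gamma}$ modulo $t^{m+1}$ produces the required preimage of $\gamma_m$.

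To conclude, since $\red{(\mJet{\widetilde{W}^{t}}{S}{m})}$ is reduced and flat over the integral, one-dimensional base $S$, each of its irreducible components dominates $S$. Surjectivity of the $S$-morphism $\varphi_m$ then implies that every irreducible component $C$ of $\red{(\mJet{W}{S}{m})}$ is the image $\varphi_m(\widetilde{C})$ of some such dominating component $\widetilde{C}$, so $C$ also dominates $S$. Because $S\cong\CC_{o}$ is a smooth one-dimensional germ (locally a discrete valuation ring), a reduced $S$-scheme all of whose irreducible components dominate $S$ is automatically flat over $S$. The main obstacle is the surjectivity step: unlike arcs, $m$-jets do not automatically lift through a proper morphism, so the key technical point is to extend each jet to an arc first, lift the arc via the valuative criterion, and then truncate.
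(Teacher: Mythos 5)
The paper does not prove this corollary from scratch: it observes that, by the main theorem, $W$ admits a simultaneous embedded resolution, and then quotes Theorem 3.4 of \cite{Ley18}, which is precisely the statement that a deformation admitting such a resolution has flat reduced relative jet spaces. You instead attempt a direct proof of that cited result. Parts of your sketch are sound: the local product structure from Definition \ref{def:NCDR}(2) does give flatness of $\red{(\mJet{\widetilde{W}^{t}}{S}{m})}$ over $S$, and over the smooth one-dimensional germ $S$ a reduced scheme is flat if and only if each irreducible component dominates $S$. The problem is the central step, the surjectivity of $\varphi_m:\mJet{\widetilde{W}^{t}}{S}{m}\rightarrow \mJet{W}{S}{m}$, where your argument breaks down in two places. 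First, an $m$-jet on the singular fibre $W_{s_0}$ need not extend to an arc; Noetherianity has nothing to do with it. The truncation map from the arc space to the $m$-jet space is not surjective for singular varieties: already for the cusp $x^2=y^3$ the $2$-jet $t\mapsto(t^2,t)$ is a point of the jet scheme, but every arc on the curve through the origin has $x$-order at least $3$, so no arc truncates to it (for $\{xy=0\}$ the $1$-jet $t\mapsto(t,t)$ gives an even quicker example, since $\CC[[t]]$ is a domain). Second, properness of $\varphi$ gives no control over $\varphi_m$: the valuative criterion concerns discrete valuation rings, i.e.\ arcs, and the $m$-jet functor preserves neither properness nor surjectivity, so the claim that $\varphi_m$ is closed is also unjustified.

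This is not a repairable technicality but the heart of the matter: the irreducible components of the jet scheme of the special fibre consisting of jets that do \emph{not} lift to arcs are exactly the potential source of components of $\red{(\mJet{W}{S}{m})}$ lying entirely over $o\in S$, i.e.\ of non-flatness. By assuming every jet extends to an arc and then lifting through $\varphi$, you assume away precisely the difficulty the theorem addresses. Any genuine proof has to show directly that each such component is swept out by components dominating $S$, which is what the analysis of relative jet spaces via the relative normal crossing structure in \cite{Ley18} accomplishes; surjectivity of $\varphi_m$ is neither available nor needed there. As written, your argument establishes flatness upstairs but not the passage down to $\mJet{W}{S}{m}$.
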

\begin{proof}
By the  previous theorem $W$ admits an embedded simultaneous resolution. Hence the corollary is an immediate consequence of Theorem $3.4$ of \cite{Ley18}.
\end{proof}

Finally, we comment on the organization of the article.  In section \ref{sec:newton-poly} we study geometric properties of pairs of Newton polyhedra that have the same Newton number. This will allow us to construct the desired simultaneous resolution.   In this section we give an affirmative answer to the conjecture presented in article \cite{BKW19}.  This result together with Theorem \ref{the:semicont-newton} (see \cite{Fur04})  is a complete solution to an Arnold problem (No. 1982-16 in his list of problems, see \cite{Arn04}) in the case of convenient Newton polyhedra. In section \ref{sec:Main} we prove the main result of the article.  Finally, in section \ref{sec:CasDeg}  we study properties of degenerate $\mu$-constant deformations.
The main result of this section is Proposition \ref{pro:(b,1)-d}, which is a kind of analogue to  the existence of a good apex (see definition \ref{def:good-apex}).


\section{Preliminaries on Newton Polyhedra}
\label{sec:newton-poly}
In this section we study geometric properties of pairs of Newton Polyhedra having the same Newton number, one contained in the other.
In this article we study the deformations of hypersurfaces $\CC^{n+1}_{o}$, whereby the natural things would be to study polytopes in  $\RR^{n+1}$, $n\geq 1$.  Nevertheless, in order to avoid complicating the notations unnecessarily, we will work with polytopes in  $\RR^{n}$, $n\geq 2$. \\

Given an affine subspace $H$ of $\RR^n$, a convex polytope in $H$ is a non-empty set $P$ given by the intersection of $H$ with a finite set of half spaces of $\RR^n$. In particular, a compact convex polytope can be seen as the convex hull of a finite set of points in $\RR^n$.  The dimension of a convex polytope is the dimension of the smallest affine subspace of $\RR^n$  that contains it.  We will say that $P$ is a  polyhedron (resp. compact polyhedron)  if $P$ can be  decomposed  into a finite union of  convex (resp. compact convex) polytopes of disjoint interiors.  We will say that  $P$ is of pure dimension $n$ if $P$ is a finite union of $n$-dimensional convex polytopes.
A hyperplane $K$ of $ \RR^n$ is supporting $ P$ if one of the two closed half spaces defined by $ K$ contains $ P$. A subset $ F$ of $ P$ is called a face of $ P$ if it is either $ \emptyset$, $ P$ itself,  or the intersection of $ P$ with a supporting hyperplane. A face $F$ of $P$  of dimension  $0\leq d \leq \dim(P)-1$ is called  $d$-dimensional face.
In the case that $d$ is $0$ or $ 1$,  $F$ is  called vertex or edge, respectively.\\

An $n$-dimensional simplex $\Delta$ is a compact convex polytope generated by $n+1$ points of $\RR^n$ in general position.

Given an $n$-dimensional compact polyhedron $P\subset \RR^{n}_{\geq 0}$ , the Newton number of $P$ is defined by

	\begin{center}
	$\nu(P):=n!V_n(P)-(n-1)!V_{n-1}(P)\cdots (-1)^{n-1}V_1(P)+(-1)^nV_0(P),$

	\end{center}
	where $V_n(P)$ is the volume of $P$, $V_{k}(P)$, $1\leq k\leq n-1$, is the sum of the $k$-dimensional volumes of the intersection of $P$ with the coordinate planes of dimension $k$, and $V_0(P)=1$ (resp. $V_0(P)=0$ ) if $o\in P$ (resp.
 $o\notin P$), where $o$ is the origin of $\RR^n$.
 In this section we are interested in studying the
 monotonicity of the Newton Number, we will always consider the case when $P$ is  compact.\\

	Let $I\subset \{1,2,...,n\}$. We define the following sets:

\begin{center}
 $\RR^{I}:=\{(x_1,...,x_n)\in \RR^{n}:\;x_i=0\;\mbox{if}\;i\notin I\}$ $\RR_{I}=\{(x_1,...,x_n)\in \RR^{n}:\;x_i=0\;\mbox{if}\;i\in I\}$
\end{center}

Given a polyhedron $P$ in $\RR^n$, we write $P^I:=P\cap \RR^{I}$.  Consider an $n$-dimensional simplex $\Delta\subset \RR^n_{\geq 0}$. A {\it full supporting coordinate subspace of $\Delta$} is a coordinate subspace $\RR^{I}\subset \RR^n$ such that $\dim \Delta^I=|I|$.  In the article \cite{Fur04} the author proves that there exists a unique full-supporting coordinate subspace of $\Delta$ of minimal dimension. We will call this subspace {\it the minimal full-supporting coordinate subspace  of $\Delta$}.\\

We denote by $\Ver{P}$ the set of vertices of $P$.\\

The next result gives us a way of calculating the Newton number of certain polyhedra using projections.

\begin{proposition}(See \cite{Fur04})
\label{pro:proj-I}
Let $o\notin P\subset \RR^{n}_{\geq 0}$ be a compact polyhedron that is a finite union of $n$-simplices $\Delta_i$,
$1\leq i\leq m$, that satisfy
$$
\Ver{\Delta_i}\subset \Ver{P}.
$$
Assume that there exists $I\subset \{1,2,...,n\}$ such that $\RR^I$ is the minimal full-supporting coordinate subspace of $\Delta_i$ and  $P^I=\Delta_{i}^{I}$ for all $1\leq i\leq m$.  Then $\nu(P)= |I|!V_{|I|}\left(P^I\right)\nu(\pi_I(P))$ where $\pi_I:\RR^n\rightarrow \RR_{I}$  is the projection map.
\end{proposition}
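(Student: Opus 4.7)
Let $k := |I|$, set $B := P^I$ and $Q := \pi_I(P)$. The strategy is to express $j!V_j(P)$ in terms of $V_k(B)$ and $V_{j-k}(Q)$ for every $j$ and then substitute into the alternating sum that defines $\nu(P)$; the factor $k!V_k(B)\cdot\nu(Q)$ then emerges from collecting these identities.

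The key local computation is a block-determinant formula for a single $\Delta_i$. Since $\Delta_i^I=B$, the $k+1$ vertices of $B$ lie in $\RR^I$ and the remaining $n-k$ vertices of $\Delta_i$ lie outside it. Ordering the coordinates so that those indexed by $I$ come first and taking a vertex of $B$ as basepoint, the matrix of edge vectors becomes block upper-triangular: its upper-left $k\times k$ block has determinant $\pm k!V_k(B)$, and its lower-right $(n-k)\times(n-k)$ block consists of the $\RR_I$-components of the remaining vertices and has determinant $\pm (n-k)!V_{n-k}(\pi_I(\Delta_i))$. This yields $n!V_n(\Delta_i)=k!V_k(B)\cdot (n-k)!V_{n-k}(\pi_I(\Delta_i))$. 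Summing over $i$, and using that the projected simplices $\pi_I(\Delta_i)$ have pairwise disjoint interiors whose union is $Q$, one obtains $n!V_n(P)=k!V_k(B)\cdot (n-k)!V_{n-k}(Q)$.

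For $j<n$, fix $J\subset\{1,\ldots,n\}$ with $|J|=j$. Furuya's uniqueness result, combined with the closure of the family of full-supporting subspaces of a simplex under intersection, shows that $\RR^I$ is contained in every full-supporting coordinate subspace of each $\Delta_i$. Hence if $I\not\subset J$, then $\RR^J$ is not full-supporting for any $\Delta_i$, so $\dim(\Delta_i\cap \RR^J)<j$ and this slice contributes zero to $V_j(P)$; in particular $V_j(P)=0$ for every $j<k$. If $I\subset J$, write $J=I\sqcup K$ with $|K|=j-k$; the block-determinant formula applied to each $j$-dimensional face $\Delta_i\cap \RR^J$, together with the identity $\pi_I(\Delta_i\cap\RR^J)=\pi_I(\Delta_i)\cap\RR^K$, yields $j!V_j(P\cap\RR^J)=k!V_k(B)\cdot (j-k)!V_{j-k}(Q\cap\RR^K)$. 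Summing over all $K\subset\{1,\ldots,n\}\setminus I$ with $|K|=j-k$ gives
\[ j!V_j(P) \;=\; k!V_k(B)\cdot (j-k)!V_{j-k}(Q), \qquad j\geq k. \]

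Since $B\subset\Delta_i$ gives $0=\pi_I(B)\in Q$, we have $V_0(Q)=1$. Substituting the identities above into the definition of the Newton number and setting $\ell=j-k$,
\[ \nu(P) \;=\; \sum_{j=k}^n (-1)^{n-j}\, j!V_j(P) \;=\; k!V_k(B)\sum_{\ell=0}^{n-k} (-1)^{(n-k)-\ell}\,\ell!V_\ell(Q) \;=\; k!V_k(B)\cdot\nu(Q), \]
as required. The main obstacle in this plan is the geometric assertion $V_{j-k}(Q\cap\RR^K)=\sum_i V_{j-k}(\pi_I(\Delta_i)\cap\RR^K)$: one must show that the given disjoint-interior simplicial decomposition of $P$ projects to a disjoint-interior decomposition of $Q$. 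This rigidity should follow from the shared base $B$ together with the confinement of all $\Delta_i$ to the positive orthant $\RR^n_{\geq 0}$, which prevents two common-base pyramids with disjoint interiors from having overlapping transverse shadows.
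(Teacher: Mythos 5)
The per-simplex block-determinant identity is correct, and, interestingly, the step you flag as the ``main obstacle'' (disjointness of the projected interiors for the top-dimensional term) is actually provable and is not where the difficulty lies: at a point $p$ in the relative interior of the common base $B$, the tangent cone of each $\Delta_i$ contains the whole subspace $\RR^I$ (because $\dim B=|I|$), hence is a full $\pi_I$-preimage, so disjointness of the interiors of the $\Delta_i$ forces disjointness of the interiors of the $\pi_I(\Delta_i)$; the positive orthant plays no role in that argument. The genuine gap is in the intermediate terms. Your identity $j!V_j(P\cap\RR^J)=k!V_k(B)\,(j-k)!\,V_{j-k}(Q\cap\RR^K)$ is obtained by summing the per-face block formulas over $i$, which tacitly assumes that the $j$-dimensional slices $\Delta_i\cap\RR^J$, and their $\pi_I$-images, meet pairwise only in dimension $<j$. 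This does not follow from the disjointness of the $n$-dimensional interiors: already for $J=I$ all the slices coincide with $B$, and for $I\subsetneqq J\subsetneqq\{1,\dots,n\}$ with $n-|J|\ge 2$ two slices can overlap $|J|$-dimensionally while the simplices themselves have disjoint interiors, and then both your summation step and the claimed identity break down.

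Concretely, in $\RR^4$ put $v_1=(1,0,0,0)$, $v_2=(2,0,0,0)$, $\Delta_1=\operatorname{conv}\bigl(v_1,v_2,(0,1,0,0),(0,0,1,0),(0,0,2,1)\bigr)$ and $\Delta_2=\operatorname{conv}\bigl(v_1,v_2,(0,2,0,0),(0,0,0,1),(0,0,1,2)\bigr)$. These are $4$-simplices with disjoint interiors ($\Delta_1\subset\{x_3\ge x_4\}$, $\Delta_2\subset\{x_3\le x_4\}$), each has minimal full-supporting coordinate subspace $\RR^{\{1\}}$ with $\Delta_i\cap\RR^{\{1\}}=P\cap\RR^{\{1\}}=[v_1,v_2]$, every vertex of $\Delta_i$ is a vertex of $P=\Delta_1\cup\Delta_2$, and $o\notin P$; so all hypotheses as quoted hold with $I=\{1\}$. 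For $J=\{1,2\}$ the slices are the triangles in the $(x_1,x_2)$-plane with base $[v_1,v_2]$ and apexes $(0,1)$, $(0,2)$; they overlap in a triangle of area $1/3$, so the union has area $7/6$, whereas your identity predicts $1$. Accordingly $\nu(P)=4/3$ while $|I|!\,V_{|I|}(P^I)\,\nu(\pi_I(P))=\nu(\pi_I(P))=1$. So the gap is not a routine verification: from the hypotheses as literally stated the conclusion cannot be derived, and a correct proof must use the finer structure present in Furuya's setting (and in the place where this paper applies the proposition, namely the sets $Z_{(i,I')}$ in the proof of Proposition \ref{pro:p-posit}, where the behaviour of all slices $P^J$ with $J\supset I$ is controlled). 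Since the paper itself gives no proof and only cites \cite{Fur04}, the missing hypotheses have to be taken from there; your argument as written never invokes them.
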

Let  $\mathcal{E}:=\{e_1,e_2,...,e_n\}\subset \ZZ_{\geq 0}^n  $ be the standard basis of $\RR^n$.  Let
$$
P\subset \RR_{\geq 0}^n
$$
be a polyhedron of pure dimension $n$.  Consider the following conditions:
\begin{enumerate}
\item $o\in P$
\item  $P^J$ is homeomorphic to a $|J|$-dimensional closed  disk  for each $J\subset\{1,...,n\}$.
\item Let $I\subset \{1,...,n\}$  be a non-empty subset. If $(\alpha_1,..,\alpha_n)\in \Ver{P}$ then for each $i\in I$ we must have either $\alpha_i\geq 1$ or $\alpha_i=0$ (recall that the $\alpha_i$ are real numbers that need not be integers).
 \end{enumerate}

We will say that $P$ is {\it pre-convenient} (resp. {\it $I$-convenient}) if it satisfies ($1$) and ($2$) (resp. ($1$), ($2$), and ($3$)). In the case when $I:=\{1,...,n\}$  we will simply say that $P$ is {\it convenient} instead of $I$-convenient.\\

	 Given a closed discrete set $S\subset \RR^{n}_{\geq 0} \setminus \{o\}$, denote by $\Gamma_{+}(S)$ the convex hull of the set $\bigcup\limits_{\alpha\in S}(\alpha+ \RR_{\geq 0}^n)$. The polyhedron $\Gamma_{+}(S)$ is called the {\it Newton polyhedron} associated to $S$.  The {\it Newton boundary} of $\Gamma_{+}(S)$, denoted by $\Gamma(S)$, is the union of the compact faces of $\Gamma_{+}(S)$.  Let $\Ver{S}:=\Ver{\Gamma(S)$} denote the set of vertices of $\Gamma(S)$

	 \begin{remark}

 Note that when we refer to vertices of $\Gamma(S)$  we are speaking about $0$-dimensional faces of
$\Gamma(S)$.
	 For example if  $$S:=\{(0,3), (1,2), (2,1), (4,0), (3,1)\},$$ then $\Ver{S}=\{(0,3),(2,1), (4,0)\}$.
	 \end{remark}

 We say that a closed discrete set $S\subset \RR^{n}_{\geq 0} \setminus \{o\}$   is {\it pre-convenient} (resp. {\it $I$-convenient}) if
$\Gamma_{-}(S):=\overline{\RR_{\geq 0}^n\setminus \Gamma_{+}(S)}$  is pre-convenient (resp. $I$-convenient).
The Newton number of a  pre-convenient closed discrete set $S\subset \RR^{n}_{\geq 0} \setminus \{o\}$  is

$$
\nu(S):=\nu(\Gamma_{-}(S)).
$$
Note that this number can be negative. In the case when $P$ is the polyhedron $\Gamma_-(S)$ associated to a closed discrete set $S$, condition $(1)$ holds automatically and condition ($2$) can be replaced by the following:

 \begin{enumerate}
 \item[($2'$)] For each $e\in \mathcal{E}$ there exists  $m>0$ such that $me\in \Ver{S}$.
 \end{enumerate}

Consider a convergent power series $g\in\CC\{x_1,...,x_n\}$:
$$
g(x)=\sum\limits_{\alpha\in Z}\ a_{\alpha}x^{\alpha},Z:=\ZZ_{\geq 0}^n\setminus \{o\}.
$$
We define $\Gamma_{+}(g)=\Gamma_+(\supp{g})$ and $\Gamma(g)=\Gamma(\supp{g})$. We  say that $g$ is a {\it convenient} power series if for all $e\in \mathcal{E}$ there exists $m>0$ such that $me\in \supp{g}$.

Observe that the closed discrete set $\supp{g}$
 is convenient if and only if the power series $g$ is convenient. We will use the following  notation: $\Ver{g}:=\Ver{\supp{g}}$, and  $\nu(g)=\nu(\supp{g})$.

 \begin{remark}
  Observe that in the case that $S$ is a closed discrete, pre-convenient set, there exists at least one finite subset $S'\subset S$ such that $\Gamma_{+}(S')=\Gamma_{+}(S)$ (in fact it suffices to consider  $S'=\Gamma(S)\cap S$).  Nevertheless, it is more comfortable to work with $S$ than with finite choices, above all because in our proofs we will eliminate or move points of $S$.

 \end{remark}

\begin{theorem}(\cite{Fur04})
 \label{the:semicont-newton}
 Let $P'\subset P$ be two  convenient polyhedra. We have
 $\nu(P)-\nu\left(P'\right)=\nu\left(\overline{P\setminus P'}\right)\geq0$, and  $\nu\left(P'\right)\geq 0$.

\end{theorem}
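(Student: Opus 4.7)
The plan is to prove the equality $\nu(P) - \nu(P') = \nu(\overline{P\setminus P'})$ (an additivity formula) separately from the two inequalities $\nu(\overline{P\setminus P'}) \geq 0$ and $\nu(P') \geq 0$.

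For additivity, I would expand both sides using the definition of $\nu$ as an alternating sum of $k$-dimensional volumes on the coordinate subspaces $\RR^J$, and match each term. The $n$-dimensional Lebesgue volume is additive under interior-disjoint unions, so $V_n(P) = V_n(P') + V_n(\overline{P\setminus P'})$. On each coordinate subspace $\RR^J$ of dimension $k$ with $1 \leq k \leq n-1$, the sets $(\overline{P\setminus P'})\cap \RR^J$ and $(P')^J$ are interior-disjoint inside $P^J$ with union equal to $P^J$ up to a measure-zero boundary, giving $V_k(P) = V_k(P') + V_k(\overline{P\setminus P'})$ term by term. The $V_0$ contributions are $1, 1, 0$ respectively: the first two because $o \in P' \subset P$ by convenience, the last because convenience of $P'$ forces $P'$ to contain a full neighbourhood of $o$ in $\RR^n_{\geq 0}$, so $o$ is at positive distance from $\overline{P\setminus P'}$. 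The alternating sums then line up to produce the claimed identity.

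For non-negativity, observe first that $\nu(P') \geq 0$ is itself a special case of the general claim \emph{$\nu(Q) \geq 0$ for every convenient polyhedron $Q$}, and that $\nu(\overline{P\setminus P'}) = \nu(P) - \nu(P')$ will follow once monotonicity $\nu(P') \leq \nu(P)$ is established. I would prove both statements by induction on the ambient dimension $n$. The base case $n=1$ is immediate: convenience forces $Q = [0,a]$ with $a \geq 1$, and $\nu(Q) = a - 1 \geq 0$. For the inductive step I would fix a triangulation of the Newton boundary of $Q$ whose $(n-1)$-simplices $\sigma_j$ have vertices in $\Ver{Q}$ and whose cones over the origin tile $Q$; grouping the resulting $n$-simplices into blocks $B_\ell$ sharing a common minimal full-supporting coordinate subspace $\RR^{I_\ell}$ and applying Proposition~\ref{pro:proj-I} to each block gives
$$\nu(B_\ell) = |I_\ell|!\,V_{|I_\ell|}(B_\ell^{I_\ell})\,\nu(\pi_{I_\ell}(B_\ell)),$$
where $\pi_{I_\ell}(B_\ell)$ lives in a coordinate subspace of dimension $n - |I_\ell|$ and remains convenient there. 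The induction hypothesis gives $\nu(\pi_{I_\ell}(B_\ell)) \geq 0$, so each block contributes non-negatively; iterated application of the additivity from the first paragraph then yields $\nu(Q) \geq 0$. Running the same decomposition directly on the ``annular'' region $\overline{P\setminus P'}$ yields $\nu(\overline{P\setminus P'}) \geq 0$ as well, since every block then has minimal full-supporting subspace of positive dimension.

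The main obstacle is the combinatorial bookkeeping in the triangulation step: one must arrange the decomposition so that each block genuinely satisfies the hypotheses of Proposition~\ref{pro:proj-I}, meaning that every simplex in a block shares the same minimal full-supporting subspace, that $B_\ell^{I_\ell}$ is itself a single common face of the block, and that all simplex vertices lie in $\Ver{Q}$ (or $\Ver{\overline{P\setminus P'}}$ in the monotonicity step). One must also verify that the iterated additivity does not double-count the faces lying along the coordinate hyperplanes shared between blocks of different dimensions, and that the projected blocks $\pi_{I_\ell}(B_\ell)$ are convenient polyhedra in the lower-dimensional ambient space so that the induction may actually be invoked. Carrying out this bookkeeping for an arbitrary convenient polyhedron, rather than for a specific family, is the delicate combinatorial heart of the argument.
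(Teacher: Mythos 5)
The paper itself offers no proof of this statement (it is quoted from Furuya \cite{Fur04}), so your argument has to stand on its own, and as written it has a genuine gap in the non-negativity half. Your proof of $\nu(P')\geq 0$ decomposes the convenient polyhedron $Q$ into cones over the origin and applies Proposition~\ref{pro:proj-I} blockwise; but Proposition~\ref{pro:proj-I} carries the hypothesis $o\notin P$, and its formula genuinely fails for origin-containing simplices, which every block of a cone-over-$o$ decomposition is. For instance, for the triangle $\Delta\subset\RR^2_{\geq 0}$ with vertices $(0,0)$, $(2,0)$, $(1,1)$ one has $\nu(\Delta)=2\cdot 1-2+1=1$, whereas the projection formula with $I=\{1\}$, $\pi_I(\Delta)=[0,1]$ and $\nu([0,1])=0$ would give $0$. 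So the inductive step for $\nu(Q)\geq 0$ cannot be run this way; the region of $Q$ near the origin needs a different mechanism (for example, monotonicity against the corner simplex $\{x\geq 0,\ \sum_i x_i\leq 1\}$, whose Newton number is $0$, but that is not in your proposal). Note also that a convenient polyhedron in the paper's sense need not be star-shaped with respect to $o$, so ``cones over the origin tile $Q$'' is itself an unjustified assumption.

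For the region $\overline{P\setminus P'}$ the blocks do avoid the origin, but there the substance of the theorem is exactly what you defer to ``bookkeeping'': that the blocks can be arranged to satisfy all hypotheses of Proposition~\ref{pro:proj-I} (one common minimal full-supporting subspace \emph{and} a common trace $P^I=\Delta_i^I$), and, above all, that each projected block $\pi_{I}(B)$ is again convenient in $\RR_{I}$ so that your dimension induction applies. This last point is where the convenience of both $P$ and $P'$ (disk-shaped traces, vertex coordinates $0$ or $\geq 1$) must actually be used; the closest argument in the paper, the proof of Proposition~\ref{pro:p-posit}, establishes such a statement only under tailored hypotheses and moreover invokes Theorem~\ref{the:semicont-newton} itself to conclude $\nu(\pi_{I'}(Z_{(i,I')}))\geq 0$, so it cannot be borrowed without circularity. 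Two smaller, fixable gaps: in the additivity step, $\overline{P\setminus P'}\cap\RR^J$ is not a priori $\overline{P^J\setminus (P')^J}$, since the closure can meet $(P')^J$; one must argue via pure $n$-dimensionality of $P'$ (a full-dimensional convex piece of $P'$ whose trace on $\RR^J$ is $|J|$-dimensional contains an orthant neighborhood of the relative interior points of that trace with positive $J$-coordinates) that the overlap has measure zero. And the claim that convenience forces $P'$ to contain a neighbourhood of $o$ in $\RR^n_{\geq 0}$ needs the observation that, by the vertex condition, every face of $P'$ not containing $o$ lies in $\{\sum_i x_i\geq 1\}$. In sum, the approach follows the expected (Furuya-style) decomposition-and-projection route, but the treatment of $\nu(P')\geq 0$ fails as stated and the convenience of the projected blocks, the heart of the matter, is asserted rather than proved.
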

\begin{corollary}
  \label{cor:semicont-newton} \mbox{}

 \begin{enumerate}
  \item Let $S$ and $S'$ be two convenient closed discrete subsets of  $\RR_{\geq 0}^{n}\setminus\{o\}$, and assume that $\Gamma_{+}(S)\subsetneq \Gamma_{+}(S')$. We have
\begin{center}
  $0\leq\nu(S)-\nu\left(S'\right)=\nu\left(\overline{\Gamma_{-}(S)\setminus\Gamma_{-}\left(S'\right)}\right)$.
  \end{center}

\item Let $S$, $S'$,  and $S''$ be three convenient closed discrete subsets of $\RR^{n}_{\geq 0}\setminus\{o\}$  such that their Newton polyhedra satisfy
$$
\Gamma_{+}(S)\subset \Gamma_{+}(S')\subset\Gamma_{+}(S'')
$$
and $ \nu(S)=\nu(S'')$.  Then $\nu(S)=\nu(S')=\nu(S'')$.

\end{enumerate}
\end{corollary}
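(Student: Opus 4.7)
The plan is to deduce both parts directly from Theorem \ref{the:semicont-newton} by the standard duality between the Newton polyhedron $\Gamma_+(S)$ and the complementary region $\Gamma_-(S) = \overline{\RR_{\geq 0}^n \setminus \Gamma_+(S)}$. The key observation is that an inclusion $\Gamma_+(S) \subset \Gamma_+(S')$ of Newton polyhedra is equivalent to the reverse inclusion $\Gamma_-(S') \subset \Gamma_-(S)$ of the complementary regions, and by definition $\nu(S) = \nu(\Gamma_-(S))$ for convenient $S$. Moreover, $S$ being convenient is exactly the condition that the polyhedron $\Gamma_-(S)$ is convenient in the sense required by Theorem \ref{the:semicont-newton}.

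For part (1), I would set $P := \Gamma_-(S)$ and $P' := \Gamma_-(S')$. The hypothesis $\Gamma_+(S) \subsetneq \Gamma_+(S')$ gives $P' \subsetneq P$, and both polyhedra are convenient. Theorem \ref{the:semicont-newton} then yields
\[
\nu(P) - \nu(P') = \nu\bigl(\overline{P \setminus P'}\bigr) \geq 0,
\]
which, after rewriting via $\nu(S) = \nu(P)$, $\nu(S') = \nu(P')$, and $\overline{P \setminus P'} = \overline{\Gamma_-(S) \setminus \Gamma_-(S')}$, is precisely the claim.

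For part (2), I would apply part (1) twice along the chain $\Gamma_+(S) \subset \Gamma_+(S') \subset \Gamma_+(S'')$, treating the possible equalities $\Gamma_+(S) = \Gamma_+(S')$ or $\Gamma_+(S') = \Gamma_+(S'')$ as trivial cases where $\nu$ is automatically preserved. In the strict-inclusion cases, part (1) delivers
\[
\nu(S) - \nu(S') \geq 0, \qquad \nu(S') - \nu(S'') \geq 0.
\]
Adding these and using the hypothesis $\nu(S) = \nu(S'')$ forces $\nu(S) - \nu(S') = \nu(S') - \nu(S'') = 0$, giving the desired equalities.

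There is no real obstacle here: the corollary is essentially a bookkeeping exercise translating Theorem \ref{the:semicont-newton} from the language of polyhedra $P$ to the language of the associated closed discrete sets $S$, plus a one-line sandwich argument for part (2). The only care needed is to check that the convenience assumption for $S$ transfers to convenience of $\Gamma_-(S)$ (which is built into the definition) and to handle the edge cases of non-strict inclusion separately so that Theorem \ref{the:semicont-newton} can be invoked.
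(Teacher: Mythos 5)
Your proof is correct and is exactly the argument the paper intends: the corollary is stated there without proof as an immediate consequence of Theorem \ref{the:semicont-newton}, obtained precisely by passing from $\Gamma_{+}$ to the complementary convenient polyhedra $\Gamma_{-}$ and, for part (2), applying the resulting inequality twice along the chain. Nothing to add.
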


For a set $I\subset \{1,...,n\}$, we write $I^c:=\{1,...,n\}\setminus I$. The following result gives us a criterion for the positivity of the Newton number of certain polyhedra.

\begin{proposition}
\label{pro:p-posit}
Let $o\notin P$ be a  pure $n$-dimensional compact polyhedron  such that there exists $I\subset \{1,...,n\}$  such that
$\dim\left(P^J\right)<|J|$ (resp. $P^J$ is homeomorphic to a $|J|$-dimensional closed  disk) for all $I\not \subset J$  (resp.  $I\subset J$).  Assume that if
$$
(\beta_1,..,\beta_n)\in \Ver{P}
$$
then for each $i\in I^c$ we have $\beta_i\geq 1$ or $\beta_i=0$. Then there exists a sequence of sets $I\subset I_1,I_2,....,I_m\subset \{1,...,n\}$, and of polyhedra $Z_{i}$, $1\leq i \leq m$, such that:

\begin{enumerate}
 \item $P=\bigcup\limits_{i=1}^{m}Z_i$,
 \item $\nu(P)=\sum\limits_{i=1}^{m}\nu(Z_i)$,
 \item  $\nu(Z_{i})= |I_i|!V_{|I_i|}\left(Z_{i}^{I_{i}}\right)\nu(\pi_{I_i}(Z_{i}))\geq 0$.
\end{enumerate}
In particular, $\nu(P)\geq0$.
\end{proposition}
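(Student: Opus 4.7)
The plan is to triangulate $P$ into $n$-simplices whose vertices lie in $\Ver{P}$, group them according to their minimal full-supporting coordinate subspaces, and apply Proposition \ref{pro:proj-I} to each group; the positivity of the resulting Newton numbers of projections will then follow from Theorem \ref{the:semicont-newton} combined with the vertex convenience condition.

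First I would fix a simplicial subdivision $\{\Delta_1,\ldots,\Delta_k\}$ of $P$ with vertices in $\Ver{P}$, and to each $\Delta_j$ attach its minimal full-supporting coordinate subspace $\RR^{I_j}$ (well-defined by \cite{Fur04}). The key observation is that $I \subset I_j$ for every $j$: otherwise the hypothesis $\dim(P^{I_j}) < |I_j|$ would contradict $\Delta_j^{I_j} \subset P^{I_j}$ having full dimension $|I_j|$.

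Next I would refine the triangulation so that within each family of simplices sharing the same $I_j = I'$, their $I'$-faces $\Delta_j^{I'}$ all appear inside a prescribed simplicial subdivision of the $|I'|$-disk $P^{I'}$. This enables me to regroup the simplices into pieces $Z_1,\ldots,Z_m$ with associated index sets $I\subset I_s$, each $Z_s$ being a union of $n$-simplices whose common $I_s$-face is a single simplex $\sigma_s\subset P^{I_s}$, so that Proposition \ref{pro:proj-I} applies group by group and gives
\[
\nu(Z_s) \;=\; |I_s|!\,V_{|I_s|}(\sigma_s)\,\nu(\pi_{I_s}(Z_s)).
\]
The additivity $\nu(P) = \sum_s \nu(Z_s)$ would then follow from a direct check: the hypothesis $\dim(P^J) < |J|$ for $I \not\subset J$ forces the terms $V_k(P)$ to receive contributions only from coordinate planes $\RR^J$ with $I \subset J$, and on those planes the volumes are additive over the disjoint-interior decomposition by construction.

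For the non-negativity statement in (3), the only non-trivial factor is $\nu(\pi_{I_s}(Z_s))$. Here the vertex condition enters: every vertex of every $\Delta_j$ is a vertex of $P$, so its coordinates indexed by $i \in I^c \supset I_s^c$ are either $0$ or at least $1$. Consequently $\pi_{I_s}(Z_s) \subset \RR_{I_s}$ has vertices satisfying the same convenience-type condition, and it can be identified with a convenient polyhedron in $\RR^{|I_s^c|}$ to which Theorem \ref{the:semicont-newton} applies, giving $\nu(\pi_{I_s}(Z_s)) \geq 0$. Summing over $s$ then yields the final claim $\nu(P)\geq 0$.

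I expect the main obstacle to be the refinement step: producing a triangulation compatible both with the grouping by $I_j$ and with a fixed face-decomposition of each $P^{I'}$, in such a way that the volumes $V_k$ for $k<n$ still decompose additively and Proposition \ref{pro:proj-I} applies to each resulting piece. Controlling how the boundary simplices sit inside the coordinate planes is the delicate combinatorial point, and it is precisely here that the vertex convenience condition plays its decisive role.
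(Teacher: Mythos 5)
Your decomposition is essentially the one the paper uses: triangulate $P$ with vertices in $\Ver{P}$, observe that every minimal full-supporting coordinate subspace contains $\RR^{I}$, group the simplices sharing a common face $\sigma_s$ in $\RR^{I_s}$, apply Proposition \ref{pro:proj-I} to each group $Z_s$, and sum. The genuine gap is in your positivity step. To invoke Theorem \ref{the:semicont-newton} you must know that $\pi_{I_s}(Z_s)$ is a \emph{convenient} polyhedron, and convenience is not just the vertex condition (3): it also requires (condition (2) of the definition) that the intersection of $\pi_{I_s}(Z_s)$ with every coordinate subspace of $\RR_{I_s}$ be a closed disk of the appropriate dimension. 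Your argument only checks the vertex condition, which indeed passes to the projection, but says nothing about these slices. A union of projected simplices, each a pyramid with apex at the origin (the image of $\sigma_s$), can meet a coordinate subspace in a lower-dimensional set, or in a set that is not a disk, even when all vertices have coordinates $0$ or $\geq 1$; for such a region Theorem \ref{the:semicont-newton} simply does not apply, and its Newton number can be negative. So the inequality $\nu(\pi_{I_s}(Z_s))\geq 0$ is unjustified as written.

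This is exactly the point where the topological hypothesis on $P$ --- that $P^J$ is homeomorphic to a $|J|$-dimensional closed disk for every $J\supset I$ --- must enter, and your proposal never uses it there (you only use the disk structure of $P^{I'}$ to organize the refinement of the triangulation, which is not where it is needed). The paper supplies the missing step by a local argument: for $\alpha$ in the relative interior of $\sigma_s$ and every $J\supset I_s$, the disk hypothesis on $P^J$ gives $B_{\epsilon}(\alpha)\cap\RR^{J}_{\geq 0}\subset P^J$ and, after shrinking $\epsilon$, $B_{\epsilon}(\alpha)\cap Z_s^{J}=B_{\epsilon}(\alpha)\cap\RR^{J}_{\geq 0}$; projecting, $\pi_{I_s}(Z_s)$ contains a neighborhood of the origin in each orthant $\RR^{J\setminus I_s}_{\geq 0}$ of $\RR_{I_s}$, and it is this, combined with the vertex condition, that makes the projection convenient and licenses the appeal to Theorem \ref{the:semicont-newton}. (Your additivity claim is asserted at roughly the same level of detail as in the paper, so I do not count it as an additional gap, but note that it too rests on distinct groups overlapping only in dimension $<|J|$ inside each $\RR^{J}$, not merely on the simplices having disjoint interiors.)
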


Given $S\subset  \RR_{\geq}^{n}\setminus\{o\}$ and $R\subset \RR^n_{\geq 0}$, we denote $S(R):=S\cup R$.
 \begin{remark}
 \label{re:simple-poly} Let $S$ be a closed discrete subset of $\RR^{n}_{\geq 0}\setminus\{o\}$ and $\alpha\in \RR_{>}^{I}$,
 $I\subset\{1,...,n\}$.

If $\alpha\not\in \Gamma_{+}(S)$, then $P:=\overline{\Gamma_{+}(S(\alpha))\setminus \Gamma_{+}(S)}$, $S(\alpha):=S\cup \{\alpha\}$, is homeomorphic to an $|n|$-dimensional closed  disk. What's more, by induction on $n$ we obtain that $\dim\left(P^{J}\right)=|J|$ for all $J\supset I$ if and only if $P^J$ is topologically equivalent to a $|J|$-dimensional closed  disk. In addition, we observe that $\dim\left(P^{J}\right)<|J|$ for all  $J\not \supset I$.
\end{remark}

\begin{proof}
The method of proof that we use is similar to the proof of Theorem $2.3$ of \cite{Fur04}.\\

As $P$ is a pure $n$-dimensional compact polyhedron, there exists a finite simplicial subdivision $\Sigma$ of $P$ such that:

\begin{enumerate}
 \item If $\Delta\in \Sigma$, then $\dim \Delta =n$.
 \item  For all $\Delta\in \Sigma$, $\Ver{\Delta}\subset \Ver{P}$.
 \item Given $\Delta, \Delta'\in \Sigma$, we have $\dim(\Delta\cap\Delta')<n$ whenever $\Delta\neq \Delta'$.

\end{enumerate}
Let $\mathcal S$ be the set formed by all the subsets  $I'\subset \{1,...,n\}$  such that there exists $\Delta \in \Sigma$ such that its minimal full-supporting coordinate subspace (m.f.-s.c.s.) is $\RR^{I'}$.

As $\dim  P^{J}<|J|$ for all $J\not \supset I$, we obtain that  $I'\supset I$ for all $I'\in \mathcal S$. We define:
$$
\Sigma\left(I'\right)=\left\{\Delta\in\Sigma\;:\; \mbox{the m.f.-s.c.s. of $\Delta$ is $\RR^{I'}$}\right\}.
$$
Let us consider the set
$$
\Sigma^{I'}:=\left\{\Delta^{I'}:\; \Delta\in \Sigma\left(I'\right)\right\}=\left\{\sigma_1,...,\sigma_{l(I')}\right\}.
$$
Given $\sigma_i\in \Sigma^{I'}$ , let $C_i:=\left\{\Delta\in \Sigma(I'):\; \Delta^I=\sigma_i\right\}$. Consider the closed set
$$
Z_{(i,I')}:=\bigcup\limits_{\Delta\in C_i}\Delta.
$$
Observe that given $\alpha\in \sigma_i^{\circ}$ (where $\sigma_i^{\circ}$ is the relative interior of $\sigma_i$), there exists
$\epsilon>0$ such that for each $J\supset I'$, we have $B_{\epsilon}(\alpha)\cap Z_{(i,I')}^J=B_{\epsilon}(\alpha)\cap \RR_{\geq 0}^J$. Indeed, as $P^J$ is topologically equivalent to a $|J|$-dimensional closed disk for all $J\supset I'$, there exits $\epsilon>0$ such that $B_{\epsilon}(\alpha)\cap \RR_{\geq 0}^J\subset P^J$.  Making $\epsilon$ smaller we may assume that $B_{\epsilon}(\alpha)\cap \RR_{\geq 0}^J \subset Z_{(i,I')}^{J}$.
This implies that $\pi_{I'}\left(Z_{(i,I')}\right)$ is a convenient polyhedron in $\RR_{I'}$  (remember that if
$(\beta_1,..,\beta_n)\in\Ver{P}$  then for each $i\in I^c$ we have $\beta_i\geq 1$ or $\beta_i=0$), from which it follows that
$\nu\left(\pi_{I'}\left(Z_{(i,I')}\right)\right)\geq 0$ (see Theorem \ref{the:semicont-newton}). Now using Proposition \ref{pro:proj-I} we obtain $\nu \left(Z_{(i,I')}\right)= |I|!V_{|I|}(\sigma_i)\nu\left(\pi_{I'}\left(Z_{(i,I')}\right)\right)\geq0$.

By construction we obtain
$$
P=\bigcup\limits_{I'\in \mathcal S}\bigcup\limits_{i=1}^{l(I')} Z_{(i,I')}
$$
and

\begin{center}
$\dim\left ( Z_{(i,I')}^{J'}\cap  Z_{(i',I'')}^{J'}\right )<|J'|$ for all $(i,I')\neq (i',I'')$.
\end{center}

This implies that
$$
\nu(P)=\sum_{I'\in S}\sum_{i=1}^{l(I')}\nu\left(Z_{(i,I')}\right).
$$

Rearranging the indices, we obtain the desired subdivision.
\end{proof}

Let $S$ and $S'$ be two closed discrete subsets of $\RR_{\geq 0}^{n}\setminus\{o\}$ such that
$$
\Gamma_{+}(S)\subset\Gamma_{+}(S').
$$
We define  $\Verd{S'}{S}:=\Ver{S'} \setminus \Ver{S}$. The following result tells us where the vertices $\Verd{S'}{S}$ are found.

\begin{proposition}

\label{prop:verS'S}
 Let $S$, $S'$ be two convenient closed discrete subsets of $\RR^{n}_{\geq 0 }\setminus\{o\}$. Suppose that
 $\Gamma_{+}(S)\subsetneqq\Gamma_{+}(S')$  and  $\nu(S)=\nu(S')$. Then
 $$
 \Verd{S'}{S}\subset\left(\RR^n_{\geq 0}\setminus \RR^n_{>0}\right).
 $$
 \end{proposition}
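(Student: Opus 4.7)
I will argue by contradiction: suppose $\alpha = (\alpha_1, \dots, \alpha_n) \in \Verd{S'}{S}$ has all coordinates strictly positive. The core idea is to isolate the contribution of $\alpha$ by adjoining it---and only it---to $S$, producing an intermediate convenient set whose Newton polyhedron sits strictly between $\Gamma_+(S)$ and $\Gamma_+(S')$. The Newton-number equality coming from Corollary \ref{cor:semicont-newton} will force the ``sliver'' produced by $\alpha$ to have Newton number zero, while the positivity of $\alpha$ combined with Remark \ref{re:simple-poly} will force that same Newton number to be strictly positive.

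The first (and, to my mind, only genuinely delicate) step is to verify that $\alpha \notin \Gamma_+(S)$. Since $\alpha$ is a vertex of $\Gamma(S')$, one can choose a supporting hyperplane $H$ of $\Gamma_+(S')$ meeting $\Gamma_+(S')$ only at $\alpha$; because $\Gamma_+(S) \subset \Gamma_+(S')$, the hyperplane $H$ also supports $\Gamma_+(S)$, and membership of $\alpha$ in $\Gamma_+(S)$ would then force $H \cap \Gamma_+(S) = \{\alpha\}$, making $\alpha$ a vertex of $\Gamma(S)$ as well and contradicting $\alpha \notin \Ver{S}$.

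Now set $S'' := S \cup \{\alpha\}$. Since $\alpha \in S'$, the chain $\Gamma_+(S) \subsetneq \Gamma_+(S'') \subset \Gamma_+(S')$ holds, and together with $\nu(S) = \nu(S')$, Corollary \ref{cor:semicont-newton}(2) forces $\nu(S'') = \nu(S)$. Setting $P := \overline{\Gamma_-(S) \setminus \Gamma_-(S'')}$, Corollary \ref{cor:semicont-newton}(1) yields $\nu(P) = 0$. On the other hand, Remark \ref{re:simple-poly} applied with $I = \{1,\dots,n\}$---valid precisely because $\alpha \in \RR^n_{>0}$ and $\alpha \notin \Gamma_+(S)$---shows that $P$ is a topological $n$-disk and that $\dim(P^J) < |J|$ for every proper $J \subsetneq \{1,\dots,n\}$. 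Combined with the evident fact $o \notin P$, this makes every lower-dimensional volume $V_k(P)$ vanish for $0 \leq k \leq n-1$, so the definition of the Newton number collapses to $\nu(P) = n!\, V_n(P)$. Since a topological $n$-disk embedded in $\RR^n$ has positive $n$-dimensional volume (invariance of domain), $\nu(P) > 0$---the desired contradiction. One could instead try to analyze $\overline{\Gamma_-(S) \setminus \Gamma_-(S')}$ directly, but vertices of $S'$ lying above $\Gamma(S)$ would obscure the local geometry near $\alpha$; passing through the single-vertex enlargement $S''$ is what makes Remark \ref{re:simple-poly} immediately applicable.
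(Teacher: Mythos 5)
Your proof is correct and takes essentially the same route as the paper: both arguments isolate the offending vertex by passing to $S''=S\cup\{\alpha\}$, use Corollary \ref{cor:semicont-newton} to force $\nu(S'')=\nu(S)$, and reach a contradiction from the fact that only the top-dimensional Newton volume changes when $\alpha\in\RR^n_{>0}$ is adjoined. The paper compares $V_k(\Gamma_{-}(S))$ and $V_k(\Gamma_{-}(S''))$ directly, while you package the identical computation as $\nu(P)=n!\,V_n(P)>0$ for the difference polyhedron $P$ via Remark \ref{re:simple-poly}; this is only a cosmetic repackaging (your explicit verification that $\alpha\notin\Gamma_{+}(S)$ is a detail the paper leaves implicit).
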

\begin{proof} Let us suppose that $\Verd{S'}{S}\not \subset (\RR^n_{\geq 0}\setminus \RR^n_{>0})$.  Let
$$
W= \Verd{S'}{S}\cap (\RR^n_{\geq 0}\setminus \RR^n_{>0})
$$
and $\alpha\in\Verd{S'}{S}\setminus W$. Let us consider $S'':=S\cup\{\alpha\}$. As the closed discrete sets $S$, $S'$, and $S''$ are convenient  and
$$
\Gamma_{+}(S)\subset \Gamma_{+}(S'')\subset\Gamma_{+}(S'),
$$
we obtain $\nu(S'')=\nu(S)=\nu(S')$ (see Corollary \ref{cor:semicont-newton}). Let us prove that this is a contradiction.  In effect, by definition of Newton number we have
\begin{center}
$\nu(S)=n!V_{n} -(n-1)!V_{n-1}+\cdots (-1)^{n-1}V_1+(-1)^{n},$

$\nu(S'')=n!V''_{n} -(n-1)!V''_{n-1}+\cdots (-1)^{n-1}V''_1+(-1)^{n},$
\end{center}
where $V_{k}:=V_k(\Gamma_{-}(S))$ and  $V''_{k}:=V_k(\Gamma_{-}(S''))$ are the ${k}$-dimensional Newton volumes of
$\Gamma_{-}(S)$ and $\Gamma_{-}(S'')$, respectively. By construction  $V''_{n}< V_{n}$ and $V'_k=V_k$, $1\leq k\leq n-1$, which implies that $\nu(S'')<\nu(S)$.
\end{proof}

If we suppose that $\nu(S')=\nu(S)$, it is not difficult to verify that this equality is not preserved by homothecies of
$\RR_{\geq 0}^{n}$. The following result describes certain partial homothecies of $\RR_{\geq 0}^{n}$ which preserve the equality of the Newton numbers.\\

Let us consider
$\displaystyle D(S,S')=\{I\subset \{1,2,...,n\}:\;\Gamma_{-}(S)\cap\RR^{I}\neq\Gamma_{-}(S')\cap\RR^{I}\}$ and
$ \displaystyle  I(S,S')=\bigcap\limits_{I\in D(S,S')}I$. It may happen that
$$
I(S,S')\notin D(S,S')
$$
or
$$
I(S,S')=\emptyset.
$$
\begin{proposition}
\label{pro:homot}
Let $S, S'\subset\RR_{\geq 0}^{n}\setminus\{o\}$ be two pre-convenient closed discrete sets such that $\Gamma_{+}(S)\subset \Gamma_{+}(S')$.  Suppose that $\{1,2,..,k\}\subset I\left(S,S'\right)$, and consider the map
$$
\varphi_{\lambda}(x_1,..,x_n)=(\lambda x_1,..,\lambda x_k,x_{k+1},..., x_n),\;\lambda \in \RR_{>0}.
$$
Then $\nu(\varphi_{\lambda}(S'))-\nu(\varphi_{\lambda}(S))=\lambda^k(\nu(S')-\nu(S))$.
\end{proposition}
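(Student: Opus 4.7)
The plan is to prove this by direct computation, tracking how each term $V_d^I$ in the defining formula of the Newton number transforms under $\varphi_\lambda$ and then invoking the hypothesis $\{1,\dots,k\}\subset I(S,S')$ to factor out a single power of $\lambda^k$. Write $P:=\Gamma_{-}(S)$, $P':=\Gamma_{-}(S')$. Since $\varphi_\lambda$ is a bijection of $\RR^n_{\geq 0}$ preserving every coordinate subspace $\RR^I$, we have $\varphi_\lambda(\Gamma_{+}(S))=\Gamma_{+}(\varphi_\lambda(S))$, hence $\varphi_\lambda(P)=\Gamma_{-}(\varphi_\lambda(S))$, and likewise for $S'$.

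For a subset $I\subset\{1,\dots,n\}$ with $|I|=d$, let $V_d^I(P):=V_d(P\cap\RR^I)$, so that $V_d(P)=\sum_{|I|=d}V_d^I(P)$. Because $\varphi_\lambda$ acts on $\RR^I$ by scaling the coordinates indexed by $I\cap\{1,\dots,k\}$ by $\lambda$ and fixing the others, I would first record the key transformation rule
\begin{equation*}
V_d^I(\varphi_\lambda(P))=\lambda^{|I\cap\{1,\dots,k\}|}V_d^I(P),
\end{equation*}
and analogously for $P'$. Moreover, $\varphi_\lambda$ fixes the origin, so $V_0(\varphi_\lambda(P))=V_0(P)=1=V_0(P')=V_0(\varphi_\lambda(P'))$ by pre-convenience of $S$ and $S'$.

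Next I would use the hypothesis. If $\{1,\dots,k\}\not\subset I$, then $I\notin D(S,S')$ by definition of $I(S,S')$, which means $P\cap\RR^I=P'\cap\RR^I$ and hence $V_d^I(P)=V_d^I(P')$. Therefore in the difference
\begin{equation*}
V_d(\varphi_\lambda(P))-V_d(\varphi_\lambda(P'))=\sum_{|I|=d}\lambda^{|I\cap\{1,\dots,k\}|}\bigl(V_d^I(P)-V_d^I(P')\bigr),
\end{equation*}
only the terms with $\{1,\dots,k\}\subset I$ survive, and for each of them $|I\cap\{1,\dots,k\}|=k$, so a common factor $\lambda^k$ comes out:
\begin{equation*}
V_d(\varphi_\lambda(P))-V_d(\varphi_\lambda(P'))=\lambda^k\bigl(V_d(P)-V_d(P')\bigr).
\end{equation*}

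Finally I would plug this into the alternating sum defining the Newton number. Since the $V_0$ terms cancel in the difference, I obtain
\begin{equation*}
\nu(\varphi_\lambda(S'))-\nu(\varphi_\lambda(S))=\sum_{d=1}^{n}(-1)^{n-d}d!\bigl(V_d(\varphi_\lambda(P'))-V_d(\varphi_\lambda(P))\bigr)=\lambda^k\bigl(\nu(S')-\nu(S)\bigr),
\end{equation*}
which is the claimed identity. There is no real obstacle here; the whole argument is bookkeeping, and the only place where care is needed is in correctly identifying that the hypothesis $\{1,\dots,k\}\subset I(S,S')$ is precisely what forces every surviving $I$ in the difference of volumes to entirely contain $\{1,\dots,k\}$, so that the scaling factor $\lambda^{|I\cap\{1,\dots,k\}|}$ is always exactly $\lambda^k$ rather than a smaller power.
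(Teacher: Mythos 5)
Your proof is correct and follows essentially the same route as the paper's: decompose each $V_d$ over coordinate subspaces $\RR^I$, use $\{1,\dots,k\}\subset I(S,S')$ to conclude that all terms with $\{1,\dots,k\}\not\subset I$ cancel in the difference, and factor out the uniform scaling $\lambda^k$ from the surviving terms before plugging into the alternating sum. The only difference is cosmetic: you make the scaling rule $V_d^I(\varphi_\lambda(P))=\lambda^{|I\cap\{1,\dots,k\}|}V_d^I(P)$ explicit, which the paper leaves implicit.
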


\begin{proof}  We will use the notation $V_m(S):=V_m(\Gamma_{-}(S))$.  Recall that
$$
V_m(S)=\sum_{|I|= m}\vol{\Gamma_{-}(S)\cap\RR^{I}}{m},
$$
where $\vol{\cdot}{m}$ is the $m$-dimensional volume.

Let $J=\{1,2,...,k\}$. Observe that if $J\not \subset I$, then
$$
\Gamma_{-}(S)\cap\RR^{I}=\Gamma_{-}\left(S'\right)\cap\RR^{I},
$$
which implies that
$\vol{\Gamma_{-}(\varphi_{\lambda}(S))\cap \RR^{I}}{|I|}=\vol{\Gamma_{-}(\varphi_{\lambda}(S'))\cap\RR^{I}}{|I|}$. In particular, if $m<k$ we have $V_m(\varphi_{\lambda}(S))=V_m(\varphi_{\lambda}(S'))$.  Let us suppose that $m\geq k$. Then:
$$
V_m(\varphi_{\lambda}(S'))-V_m(\varphi_{\lambda}(S))=\sum_{\tiny \begin{array}{c} |I|=m\\  J\subset I \end{array}}(\vol{\Gamma_{-}(\varphi_{\lambda}(S'))\cap \RR^{I}}{m}-\vol{\Gamma_{-}(\varphi_{\lambda}(S))\cap \RR^{I}}{m}).
$$
From this we obtain that  $ V_m(\varphi_{\lambda}(S'))-V_m(\varphi_{\lambda}(S))=\lambda^k(V_m(S')-V_m(S))$ and $\nu(\varphi_{\lambda}(S'))-\nu(\varphi_{\lambda}(S))=\lambda^k(\nu(S')-\nu(S))$.
\end{proof}
The following Corollary is an analogue of Proposition  \ref{prop:verS'S} in the  pre-convenient case. Remember that given $S\subset  \RR_{\geq}^{n}\setminus\{o\}$ and $R\subset \RR^n_{\geq 0}$, we denote $S(R):=S\cup R$.
\begin{corollary}
\label{cor:ana-verS'S}
Let $S\subset \RR_{\geq 0}^{n}\setminus\{o\}$ be a pre-convenient closed discrete  set, and $\alpha\in\RR^n_{>0}$,  such that
$\Gamma_{+}(S)\subsetneqq\Gamma_{+}(S(\alpha))$. Then $\nu(S(\alpha))<\nu(S)$.
\end{corollary}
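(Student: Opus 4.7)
The plan is to reduce the statement to the convenient case already handled by Proposition \ref{prop:verS'S}, using the homothety formula of Proposition \ref{pro:homot} to bridge the gap between pre-convenience and convenience. First I would observe that since $\alpha \in \RR_{>0}^n$ and $\alpha \notin \Gamma_{+}(S)$ (which is forced by the strict inclusion $\Gamma_{+}(S) \subsetneqq \Gamma_{+}(S(\alpha))$), $\alpha$ is an extremal point of $\Gamma_{+}(S(\alpha))$, hence a vertex of $\Gamma(S(\alpha))$. Thus $\alpha \in \Verd{S(\alpha)}{S} \cap \RR_{>0}^n$, which is exactly the obstruction ruled out by Proposition \ref{prop:verS'S} under the equality $\nu(S) = \nu(S(\alpha))$. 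Consequently, if $S$ and $S(\alpha)$ were already convenient, the contrapositive of Proposition \ref{prop:verS'S} combined with the monotonicity in Corollary \ref{cor:semicont-newton} would immediately give $\nu(S(\alpha)) < \nu(S)$.

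The main obstacle is that $S$ is only assumed to be pre-convenient, so we cannot invoke Proposition \ref{prop:verS'S} directly. To close this gap I would apply the homothety $\varphi_{\lambda}(x) = \lambda x$ (scaling all coordinates equally) for $\lambda$ large enough that every nonzero coordinate of every vertex of $\Gamma_{-}(\varphi_{\lambda}(S(\alpha)))$ is at least $1$. Such a $\lambda$ exists because $\Gamma_{-}(S(\alpha))$ has only finitely many vertices, whose positive coordinates attain a positive minimum. The scaled sets $\varphi_{\lambda}(S)$ and $\varphi_{\lambda}(S(\alpha))$ are then convenient, and $\varphi_{\lambda}(\alpha) = \lambda\alpha$ is still a vertex of $\Gamma(\varphi_{\lambda}(S(\alpha)))$ lying in $\RR_{>0}^n$ and not belonging to $\Ver{\varphi_{\lambda}(S)}$. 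The convenient case of the argument then yields
\[
  \nu(\varphi_{\lambda}(S(\alpha))) < \nu(\varphi_{\lambda}(S)).
\]

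Finally I would transport this strict inequality back to $S$ and $S(\alpha)$ via Proposition \ref{pro:homot}. The key combinatorial point is that since $\alpha$ has all coordinates strictly positive, $\alpha \notin \RR^{I}$ for any proper subset $I \subsetneq \{1,\dots,n\}$; consequently $\Gamma_{-}(S)\cap\RR^{I} = \Gamma_{-}(S(\alpha))\cap\RR^{I}$ for every such $I$, so $D(S,S(\alpha)) = \{\{1,\dots,n\}\}$ and $\{1,\dots,n\} \subset I(S,S(\alpha))$. Proposition \ref{pro:homot} with $k = n$ therefore gives
\[
  \nu(\varphi_{\lambda}(S(\alpha))) - \nu(\varphi_{\lambda}(S)) = \lambda^{n}\bigl(\nu(S(\alpha)) - \nu(S)\bigr),
\]
and since $\lambda^{n} > 0$ the desired strict inequality $\nu(S(\alpha)) < \nu(S)$ follows. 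The whole proof is essentially a clean assembly of tools built earlier in the section; the only delicate step is the choice of $\lambda$ producing convenient scaled sets, which rests on the finiteness of the vertex set of the Newton polyhedron.
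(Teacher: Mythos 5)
Your proof is correct and follows essentially the same route as the paper: scale by a homothety to make both sets convenient, use Proposition \ref{pro:homot} (with $I(S,S(\alpha))=\{1,\dots,n\}$) to relate the Newton numbers, and combine the semicontinuity of Theorem \ref{the:semicont-newton} with Proposition \ref{prop:verS'S} to exclude equality since $\lambda\alpha$ is a vertex in $\RR^n_{>0}$. The only difference is cosmetic — the paper phrases the last step as a contradiction from assuming $\nu(S(\alpha))=\nu(S)$, whereas you prove the strict inequality for the scaled sets and transport it back.
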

\begin{proof}
Observe that there  exists $\lambda>0$ such that the closed discrete sets $\varphi_{\lambda}(S)$, $\varphi_{\lambda}(S(\alpha))$ are convenient where $\varphi_{\lambda}$ is the homothety consisting of multiplication by $\lambda$. As
$I(S,S(\alpha))=\{1,...,n\}$, we have
$$
\nu(\varphi_{\lambda}(S))-\nu(\varphi_{\lambda}(S(\alpha))=\lambda^n(\nu(S)-\nu(S(\alpha))
$$
(see Proposition \ref{pro:homot}). By Theorem \ref{the:semicont-newton}, we have
$\nu(\varphi_{\lambda}(S(\alpha)))\leq\nu(\varphi_{\lambda}(S))$, hence $\nu(S(\alpha))\leq\nu(S)$.
If
$$
\nu(S(\alpha))=\nu(S)
$$
then $\nu(\varphi_{\lambda}(S))=\nu(\varphi_{\lambda}(S(\alpha))$. This contradicts Proposition \ref{prop:verS'S}.
\end{proof}

Take a set $I\subset\{1,\dots,n\}$.
 \begin{corollary}
  \label{cor:homot}
  Let $S$, $S'$, and $S''$ be three $I^c$-convenient closed discrete sets such that $\Gamma_{+}(S)\subset \Gamma_{+}(S')\subset \Gamma_{+}(S'')$.  Suppose that
$$
I\subset I(S,S')\cap I(S',S'')
$$
Then $\nu(S)\geq \nu(S')\geq\nu(S'')$.
\end{corollary}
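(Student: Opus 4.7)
The plan is to reduce to the convenient setting, where Theorem \ref{the:semicont-newton} applies directly, by means of the partial homothety furnished by Proposition \ref{pro:homot}. After a suitable permutation of coordinates I may assume $I=\{1,\ldots,k\}$ with $k=|I|$, so that the homothety
\[
\varphi_\lambda(x_1,\ldots,x_n)=(\lambda x_1,\ldots,\lambda x_k,x_{k+1},\ldots,x_n)
\]
acts exactly on the indices of $I$. Since $\varphi_\lambda$ preserves $\RR_{\geq 0}^{n}$ for $\lambda>0$, one has $\Gamma_{+}(\varphi_\lambda(S))=\varphi_\lambda(\Gamma_{+}(S))$, and the inclusions $\Gamma_{+}(\varphi_\lambda(S))\subset \Gamma_{+}(\varphi_\lambda(S'))\subset \Gamma_{+}(\varphi_\lambda(S''))$ are inherited from the original chain.

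The first main step is to choose $\lambda$ large enough that $\varphi_\lambda(S)$, $\varphi_\lambda(S')$ and $\varphi_\lambda(S'')$ are all three convenient. The hypothesis of $I^{c}$-convenience already secures condition (3) for every index $i\in I^{c}$ without any rescaling, while conditions (1) and ($2'$) persist trivially from the pre-convenience of the original sets. For an index $i\in I$, the $i$-th coordinates of the finitely many vertices in $\Ver{S}\cup\Ver{S'}\cup\Ver{S''}$ admit a uniform strictly positive lower bound $\alpha_{\min}>0$ among their nonzero values; choosing $\lambda\geq 1/\alpha_{\min}$ ensures $\lambda\alpha_i\geq 1$ whenever $\alpha_i>0$, and so condition (3) for every $i\in\{1,\ldots,n\}$ becomes valid for the rescaled sets.

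I then apply Theorem \ref{the:semicont-newton} to the two inclusions of convenient Newton polyhedra to obtain
\[
\nu(\varphi_\lambda(S))\ \geq\ \nu(\varphi_\lambda(S'))\ \geq\ \nu(\varphi_\lambda(S'')).
\]
On the other hand, the assumption $I\subset I(S,S')\cap I(S',S'')$ is precisely what is required in order to apply Proposition \ref{pro:homot} to each of the two consecutive pairs, yielding
\[
\nu(\varphi_\lambda(S'))-\nu(\varphi_\lambda(S))=\lambda^{k}\left(\nu(S')-\nu(S)\right)
\]
and
\[
\nu(\varphi_\lambda(S''))-\nu(\varphi_\lambda(S'))=\lambda^{k}\left(\nu(S'')-\nu(S')\right).
\]
Dividing through by $\lambda^{k}>0$ and combining with the previous inequalities produces the desired chain $\nu(S)\geq\nu(S')\geq\nu(S'')$.

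The main obstacle, as I view it, is the uniform choice of $\lambda$ in the second step: one must verify that a single homothety factor simultaneously makes all three sets convenient. This ultimately reduces to the finiteness of the vertex sets combined with $I^{c}$-convenience, which cleanly separates the role of the coordinates in $I$, where the scaling is applied, from those in $I^{c}$, where the convenience condition is already present by hypothesis. Once this uniformity is established, the rest of the argument is a formal combination of Theorem \ref{the:semicont-newton} with Proposition \ref{pro:homot}.
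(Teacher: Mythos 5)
Your proposal is correct and follows essentially the same route as the paper's own proof: reduce to $I=\{1,\dots,k\}$, rescale with the partial homothety of Proposition \ref{pro:homot} so that the three sets become convenient (possible precisely because of $I^c$-convenience), apply Theorem \ref{the:semicont-newton} to the rescaled convenient polyhedra, and transfer the inequalities back via the identity $\nu(\varphi_\lambda(S'))-\nu(\varphi_\lambda(S))=\lambda^k(\nu(S')-\nu(S))$. The only difference is that you make explicit the uniform choice of $\lambda$ (via a positive lower bound on the nonzero $I$-coordinates of the finitely many vertices), a detail the paper states without elaboration.
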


\begin{proof}
Without loss of generality, we may take $I=\{1,...,k\}$. As $S$, $S'$, and $S''$ are $I^c$-convenient, there exists
$\lambda>0$ such that after applying the map $\varphi_{\lambda}$ given by
$\varphi_{\lambda}(x_1,..,x_n)=(\lambda x_1,..,\lambda x_k,x_{k+1},..., x_n)$, the closed discrete sets $\varphi_{\lambda}(S)$,
$\varphi_{\lambda}(S')$, and $\varphi_{\lambda}(S'')$ are convenient.

As $I\subset I(S,S')\cap I(S',S'')$, we have
$$
\nu(\varphi_{\lambda}(S))-\nu(\varphi_{\lambda}(S'))=\lambda^k(\nu(S)-\nu(S')),
$$
and
$$
\nu(\varphi_{\lambda}(S'))-\nu(\varphi_{\lambda}(S''))=\lambda^k(\nu(S)-\nu(S'')).
$$
By Theorem \ref{the:semicont-newton}, we obtain $0\leq\nu(S)-\nu(S')$ and $0\leq \nu(S')-\nu(S'')$.
\end{proof}
\noindent{\bf Convention.} From now till the end of the paper, whenever we talk about a vertex $\gamma$ of a certain polyhedron and  an edge ($1$-dimensional face) of this polyhedron denoted by $E_\gamma$, it will be understood that $\gamma$  is one of the  endpoints of  $E_\gamma$.
\medskip

Given $I\subset \{1,2,...,n\}$, let $\RR^{I}_{>0}:=\{(x_1,x_2,...,x_n)\in \RR^{I}:\;x_i>0\;\mbox{if}\;i\in I\}$. Let
$S\subset\RR_{\geq}^{n}\setminus\{0\}$ be a closed discrete set, and let $\alpha\in\RR_{>0}^{I}$ be such that
$$
\Gamma_{+}(S)\subsetneqq\Gamma_{+}(S(\alpha)).
$$
Let $E_{\alpha}$ be an edge of $\Gamma(S(\alpha))$ such that $\alpha$ is one of its endpoints. Given a set $J$ with
$$
I\subsetneqq J\subset \{1,2,...,n \},
$$
we will say  that $E_{\alpha}$ is {\it $(I,J)$-convenient} if for all
$$
\beta:=(\beta_1,...,\beta_n)\in (E_{\alpha}\cap  \Ver{S})
$$
we have $\beta_{i}\geq1$ for $i\in J\setminus I$ and $\beta_i=0$ for $i\in J^{c}$. We will say that $E_{\alpha}$ is {\it strictly $(I,J)$-convenient} if $E_{\alpha}$  is $(I,J)$-convenient and whenever
$$
\beta\in(E_{\alpha}\cap\Ver{S}),
$$
there exists $i\in J\setminus I$ such that $\beta_i>1$.\\

\begin{example}
\label{exe:2d-edge-convenient}s
  Let $0<a<1$, $S:=\{(2,0), (0,2), (\frac{3}{2}(1-a), 2a)\}$, and $\alpha:= (\frac{3}{2}, 0)$.
For the edge $E_{\alpha}$ of $\Gamma_{+}(S(\alpha))$  (with endpoints $(\frac{3}{2}, 0)$ and $(0,2)$) we have:

 $$E_{\alpha}\cap \Ver{S}=\{(0,2),  (\frac{3}{2}(1-a), 2a)\}$$

 Then $E_{\alpha}$ is $(\{1\}, \{1,2\})$-convenient (resp. strictly $(\{1\}, \{1,2\})$-convenient) if and  only if $\frac{1}{2}\leq a <1$ (resp. $\frac{1}{2}<a <1$).
\end{example}

\begin{example}
 \label{exe:3d-edge-convenient}

 Let $0<a<1$, $S:=\{(1,0,0), (0,2,0), (\frac{3}{4}(1-a), 2a,0), (0,0,1)\}$, and $\alpha:= (\frac{3}{4}, 0,0)$. For the edge  $E_{\alpha}$ of $\Gamma_{+}(S(\alpha))$  (with endpoints $(\frac{3}{4}, 0,0)$ and $(0,2,0)$) we have:

 $$E_{\alpha}\cap \Ver{S}=\{(0,2,0),  (\frac{3}{4}(1-a), 2a,0)\}$$

 Then $E_{\alpha}$ is $(\{1\}, \{1,2\})$-convenient (resp. strictly $(\{1\}, \{1,2\})$-convenient) if and  only if $\frac{1}{2}\leq a <1$ (resp. $\frac{1}{2}<a <1$).

\end{example}

The following Proposition will allow us to eliminate certain vertices.

\begin{proposition}
\label{pro:edge-lifting}
Let $S\subset\RR_{\geq 0}^{n}\setminus\{0\}$ be an $I^{c}$-convenient closed discrete set, $J$ a set such that $I\subsetneqq J\subset\{1,...,n\}$, and $\alpha\in \RR^{I}_{>0}$ such that
$$
\Gamma_{+}(S)\subsetneqq\Gamma_{+}(S(\alpha))
$$
and $\nu(S(\alpha))=\nu(S)$. Assume that at least one of the following conditions are satisfied:
\begin{enumerate}
  \item $\alpha'\in \overline{\Gamma_{+}(S(\alpha))\setminus \Gamma_{+}(S)}\cap\RR^I$.
  \item $\alpha'\in\overline{\Gamma_{+}(S(\alpha))\setminus\Gamma_{+}(S)}\cap\RR_{>0}^J$ and there exists a strictly
   $(I,J)$-convenient edge $E_{\alpha}$ of $\Gamma(S(\alpha))$.
\end{enumerate}

Then $\nu(S(\alpha'))=\nu(S)$.

\end{proposition}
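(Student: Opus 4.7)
The plan is to apply Corollary~\ref{cor:homot} to the chain
\[
\Gamma_+(S)\subset\Gamma_+(S(\alpha'))\subset\Gamma_+(S(\alpha))
\]
(the right-hand inclusion holds because $\alpha'\in\overline{\Gamma_+(S(\alpha))\setminus\Gamma_+(S)}\subset\Gamma_+(S(\alpha))$) to deduce $\nu(S)\geq\nu(S(\alpha'))\geq\nu(S(\alpha))$, which together with the hypothesis $\nu(S(\alpha))=\nu(S)$ collapses to the desired equality $\nu(S(\alpha'))=\nu(S)$. The recurring technical tool in both cases is the slicing identity $\Gamma_+(T)\cap\RR^{I'}=\Gamma_+(T\cap\RR^{I'})$ for an arbitrary $T\subset\RR^n_{\geq 0}\setminus\{o\}$ and $I'\subset\{1,\dots,n\}$, which follows from the fact that a convex combination of non-negative points whose $j$-th coordinate vanishes must come from points whose $j$-th coordinate already vanishes.

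For Case~(1), where $\alpha'\in\RR^I$, the set $S(\alpha')$ is $I^c$-convenient: condition~(2') is inherited from $S$, and for $i\in I^c$ the new vertex $\alpha'$ satisfies $\alpha'_i=0$, so condition~(3) holds trivially. To check $I\subset I(S,S(\alpha'))\cap I(S(\alpha'),S(\alpha))$, take $I'$ with $I\not\subset I'$. Since $\alpha\in\RR^I_{>0}$ has a positive coordinate outside $I'$, we have $\alpha\notin\RR^{I'}$, and the slicing identity yields $\Gamma_+(S(\alpha))\cap\RR^{I'}=\Gamma_+(S)\cap\RR^{I'}$. For $\alpha'$: either $\alpha'\notin\RR^{I'}$, in which case the same conclusion holds with $\alpha$ replaced by $\alpha'$, or $\alpha'\in\RR^{I'}$, in which case $\alpha'\in\Gamma_+(S(\alpha))\cap\RR^{I'}=\Gamma_+(S)\cap\RR^{I'}$, so adding $\alpha'$ does not alter the slice. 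Corollary~\ref{cor:homot} then finishes the case.

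For Case~(2), where $\alpha'\in\RR^J_{>0}$ with $I\subsetneqq J$, this strategy runs into a genuine obstruction: for $i\in J\setminus I$ the coordinate $\alpha'_i$ may lie in $(0,1)$, so $S(\alpha')$ is only $J^c$-convenient, not $I^c$-convenient, and Corollary~\ref{cor:homot} does not apply verbatim. The role of the strictly $(I,J)$-convenient edge $E_\alpha$ is precisely to supply the missing room in the $J\setminus I$ directions: every $\beta\in E_\alpha\cap\Ver{S}$ has $\beta_i\geq 1$ for $i\in J\setminus I$, with strict inequality for at least one such $i$. The plan is to use $E_\alpha$ to project $\alpha'$ onto $\RR^I$ along a direction adapted to this edge, producing an auxiliary $\tilde\alpha\in\overline{\Gamma_+(S(\alpha))\setminus\Gamma_+(S)}\cap\RR^I$; to apply Case~(1) to conclude $\nu(S(\tilde\alpha))=\nu(S)$; and finally to transfer this equality to $\alpha'$ through a homothety argument based on Proposition~\ref{pro:homot}, using the strict excess in the $J\setminus I$ coordinates to interpolate between $\tilde\alpha$ and $\alpha'$ inside the region of constant Newton number. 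The main obstacle is precisely this transfer: constructing $\tilde\alpha$, checking that it lies in $\overline{\Gamma_+(S(\alpha))\setminus\Gamma_+(S)}\cap\RR^I$, and verifying that the homothety-based interpolation stays inside the locus where the Newton number equals $\nu(S)$ require a careful combinatorial analysis of the edge $E_\alpha$ and of the local geometry of $\Gamma(S(\alpha))$ near $\alpha$, with the strictness of the $(I,J)$-convenience providing exactly the decisive amount of slack.
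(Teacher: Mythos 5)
Your Case (1) is correct and coincides with the paper's own argument: $S$, $S(\alpha')$ and $S(\alpha)$ are all $I^c$-convenient, $I\subset I(S,S(\alpha'))\cap I(S(\alpha'),S(\alpha))$, and Corollary \ref{cor:homot} squeezes $\nu(S(\alpha'))$ between $\nu(S)$ and $\nu(S(\alpha))$.

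Case (2), however, is a plan rather than a proof, and the step you defer is the entire content of the statement. You correctly diagnose why the direct squeeze fails (for $\alpha'\in\RR^J_{>0}$ the set $S(\alpha')$ is only $J^c$-convenient; moreover $J\not\subset I(S(\alpha'),S(\alpha))$, since the two polyhedra already differ in the slice $\RR^I$), but the proposed remedy --- produce some $\tilde\alpha\in\overline{\Gamma_{+}(S(\alpha))\setminus\Gamma_{+}(S)}\cap\RR^I$ by ``projecting'' $\alpha'$ along the edge, apply Case (1), then ``transfer back'' to $\alpha'$ by a homothety interpolation --- is never constructed, and Proposition \ref{pro:homot} by itself cannot perform such a transfer: it rescales a fixed block of coordinates for a nested pair of polyhedra, it does not move the added vertex from $\RR^I$ out into $\RR^J_{>0}$ while controlling $\nu$. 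The remark following the proposition in the paper (based on Example \ref{exe:2d-edge-convenient}) shows that for points of $\RR^J_{>0}$ the conclusion genuinely fails without strict $(I,J)$-convenience, so any correct argument must exploit the strictness quantitatively, not merely as unspecified ``slack''. The paper's actual proof does exactly this and is of a different nature: using $\beta_i>1$ for some $i\in J\setminus I$, it replaces the vertex $\beta$ by $\gamma=\beta-\delta e_i+\beta'\in\Gamma(S(\alpha))\cap\RR^J_{>0}$, gets $\nu(S(\gamma))=\nu(S)$ by the $I$-homothety argument, handles all $\alpha'\in\overline{\Gamma_{+}(S(\gamma))\setminus\Gamma_{+}(S)}\cap\RR^J$ by the $J$-homothety argument, and then treats the residual region $(\Gamma_{+}(S(\alpha))\setminus\Gamma_{+}(S(\gamma)))\cap\RR^J_{>0}$ by a separate mechanism: continuity of $\tau\mapsto\nu(S(\tau))$ on the compact set $C$, a Zorn's-lemma maximality argument on the locus where $\nu$ takes the wrong value, and a decomposition $P=Q_0\cup Q_1$ with $\nu(Q_0),\nu(Q_1)\ge 0$ via Proposition \ref{pro:p-posit}, leading to a contradiction with maximality; boundary points $\alpha'\in\Gamma(S(\alpha))\cap\RR^J_{>0}$ are then recovered by continuity. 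None of this machinery (nor any substitute for it) appears in your proposal, so Case (2) remains unproved.
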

 \begin{remark}
 In Example \ref{exe:2d-edge-convenient} we have $\nu(S)= \nu(S(\alpha))$ if and only if
 $$
 a=\frac{1}{2}.
$$
In particular, if  $\nu(S)= \nu(S(\alpha))$ then there are no  strictly $(\{1\}, \{1,2\})$-convenient edges for $\alpha$.\\
Observe that for each $0<a<1$,  if $\alpha' \in ( \Gamma_{+}(S(\alpha))\setminus\Gamma_{+}(S))\cap \RR^2_{>0}$, then $\mu(S)\neq \mu(S(\alpha'))$.

In Example \ref{exe:3d-edge-convenient} we have $\nu(S)= \nu(S(\alpha))$ for all $0<a<1$.  What's more, for all
$\alpha'\in\overline{\Gamma_{+}(S(\alpha))\setminus\Gamma_{+}(S)}\cap \RR^{\{1,2\}}$ we have $\nu(S(\alpha'))=\nu(S)$, which indicates that the hypotheses of the preceding proposition are just sufficient.\\  Observe that $\overline{\Gamma_{+}(S(\alpha))\setminus\Gamma_{+}(S)}\cap \RR^{\{1,2\}}$ is  the region of the plane bounded by the triangle of the vertices $(\frac{3}{4},0,0)$,$(2,0,0)$ and $(\frac{3}{4}(1-a), 2a,0)$).
 \end{remark}

\begin{proof} Let us assume that $\alpha'\in \overline{\Gamma_{+}(S(\alpha))\setminus \Gamma_{+}(S)}\cap\RR^{I}$. We
may assume that $\Gamma_{+}(S)\subsetneqq\Gamma_{+}(S(\alpha'))\subsetneqq \Gamma_{+}(S(\alpha))$ (otherwise there is nothing to prove).

Observe that the closed discrete sets $S$, $S(\alpha')$, and $S(\alpha)$ are  $I^c$-convenient and
$$
I\subset I(S,S(\alpha'))\cap I(S(\alpha'),S(\alpha)).
$$
Using Corollary \ref{cor:homot}, we obtain $\nu(S)=\nu(S(\alpha'))=\nu(S(\alpha))$. This completes the proof in Case $(1)$.\\

Next, assume that (2) holds. Consider a strictly $(I,J)$-convenient edge $E_{\alpha}$ of $\Gamma(S(\alpha))$. Let
$\beta:=(\beta_1,.,,,\beta_n)\in E_{\alpha}\cap \Ver{S}$. Let  $E'\subset E_{\alpha}$ be the line segment with endpoints $\alpha$ and $\beta$. Without loss of generality we may assume that $E'\cap \Ver{S}=\{\beta\}$. As $E_{\alpha}$ is strictly $(I,J)$-convenient, there exists  $i\in J\setminus I$ such that  $\beta_i>1$. Let $\delta>0$ be  sufficiently small so that $\beta_i-\delta \geq1$ and let $\beta'\in\RR_{\geq 0}^I$ be such that
$\gamma:=\beta -\delta e_i+\beta'\in\Gamma(S(\alpha))\cap\RR^J_{>0}$. Then
$$
\Gamma_{+}(S)\subsetneqq\Gamma_{+}(S(\gamma))\subsetneqq\Gamma_{+}(S(\alpha)).
$$
Observe that the closed discrete sets $S$s, $S(\gamma)$, and $S(\alpha)$ are $I^c$-convenient and
$$
I\subset I(S,S(\gamma))\cap I(S(\gamma),S(\alpha)).
$$
Then $\nu(S(\gamma))=\nu(S(\alpha))=\nu(S)$.

If $\alpha'\in \overline{ \Gamma_{+}(S(\gamma))\setminus \Gamma_{+}(S)}\cap \RR^{J}$, we have
$$
\Gamma_{+}(S)\subset  \Gamma_{+}(S(\alpha'))\subset \Gamma_{+}(S(\gamma)).
$$

The closed discrete sets $S$, $S(\alpha')$, and $S(\gamma)$ are $J^{c}$-convenient and
$$
J\subset I(S,S(\alpha'))\cap I(S(\alpha'),S(\gamma)).
$$
Then $\nu(S(\alpha'))=\nu(S(\gamma))=\nu(S)$.\\

We still need to study the case $\alpha'\in  (\Gamma_{+}(S(\alpha))\setminus \Gamma_{+}(S(\gamma)))\cap \RR^J_{>0}$. \\

 Consider the compact set $C:=\overline{(\Gamma_{+}(S(\alpha'))\setminus \Gamma_{+}(S))}\cap \RR^J$, and the map
  \begin{center}
   $\nu_S:C\rightarrow \RR;\;\tau\mapsto \nu_S(\tau):= \nu(S(\tau))=\sum\limits_{m=0}^{n}(-1)^{n-m}m!V_m(S(\tau))$
  \end{center}
where $V_m(S(\tau)):=V_m(\Gamma_{-}(S(\tau))))$.
  The map $\nu_S$  is continuous in $C$.  In effect, recall that
$V_m(S(\tau))=\sum\limits_{|I'|= m}\vol{\Gamma_{-}(S(\tau))\cap \RR^{I'}}{m}$. Hence
$$
V_m(S(\tau))=V(\tau)+V'(\tau),
$$
where
\begin{center}
   $\displaystyle V(\tau):= \sum_{\tiny \begin{array}{c} |I'|=m\\  I'\supseteq J \end{array}}\vol{\Gamma_{-}(S(\tau))\cap \RR^{I'}}{m},$
 $\displaystyle V'(\tau):= \sum_{\tiny \begin{array}{c} |I'|=m\\  I'\not\supset J \end{array}}\vol{\Gamma_{-}(S(\tau))\cap \RR^{I'}}{m}.$
 \end{center}

The function $V:\RR^J\rightarrow \RR;\tau\mapsto V(\tau)$  is continuous, since each summand  is continuous  in  $\RR^J$.
The function $V':C\rightarrow \RR;\tau\mapsto V'(\tau)$ is constant, since
$\Gamma_{-}(S(\alpha'))\cap(\RR_{\geq 0}^J\setminus \RR_{>0}^J) =\Gamma_{-}(S) \cap(\RR_{\geq 0}^J\setminus \RR_{>0}^J)$. Then each  $V_{m}(S(\tau))$ is continuous  in  $\tau \in  C$, which implies that the function $\nu_S$ is continuous in $C$.  \\

Let us assume that $\alpha'\in (\Gamma_{+}(S(\alpha))\setminus \Gamma_{+}(S(\gamma)))\cap \RR^J_{>0}$  and
$\alpha'\notin \Gamma(S(\alpha))$. Let us suppose  that  $\nu_S(\alpha')=\nu(S(\alpha'))\neq \nu(S)$. Let us consider the set
$\mathcal{C}:=\{ \tau\in C:\;\nu_S(\tau))=\nu_S(\alpha'))\}$. The continuity  of $\nu_S$ implies that $\mathcal{C}$ is compact.  We define the following partial order on $\mathcal{C}$. For $\tau,\tau'\in \mathcal{C}$ we will say that $\tau\leq  \tau'$ if
$\Gamma_{+}(S(\tau'))\subset \Gamma_{+}(S(\tau))$. Let us consider an ascending chain
$$
\tau_1\leq \tau_2\leq \cdots \leq \tau_n\leq \cdots
$$
We will prove that this chain is bounded above in $\mathcal{C}$.  Let us consider the convex closed set
$$
\Gamma=\bigcap_{i\geq 1} \Gamma_{+}(S(\tau_i)).
$$
As $\mathcal{C}$ is compact, the sequence $\{\tau_1,\tau_2,...,\tau_n,...\}$  has a convergent subsequence $\{\tau_{i_1}, \tau_{i_2},...,\tau_{i_n},...\}$.  Observe that
$$
\Gamma_{+}(S(\tau))=\bigcap_{n\geq 1} \Gamma_{+}(S(\tau_{i_n})),
$$
where $\tau:=\lim\limits_{n\rightarrow\infty}\tau_{i_n}\in \mathcal{C}$.  By definition, $\Gamma\subset\Gamma_{+}(S(\tau))$, and by construction for each $i\geq 1$ there exists  $n\geq 1$ such that
$\Gamma_{+}(S(\tau_{i_n}))\subset \Gamma_{+}(S(\tau_{i}))$.  Then $\Gamma=\Gamma_{+}(S(\tau))$, which implies that
$\tau_i\leq \tau$ for all $i\geq 1$. By Zorn's lemma $\mathcal{C}$ contains at least one maximal element.  Let $\tau\in \mathcal{C}$  be a maximal element. Recall that we consider $\alpha'\notin \Gamma(S(\alpha))$, and we made the assumption that $\nu(S(\alpha'))\neq \nu(S)$. Hence $\tau\notin (\overline{ \Gamma_{+}(S(\gamma))\setminus \Gamma_{+}(S)})\cap \RR^{J}$.

Observe that for all $\alpha''\in\Gamma_{+}(S(\alpha))$ we have
$$
\Gamma_{+}(S)\subset \Gamma_{+}(S(\alpha''))\subset \Gamma_{+}(S(\gamma,\alpha''))\subset\Gamma_{+}(S(\alpha)).
$$

As the closed discrete sets $S$, $S(\alpha'')$, and $S(\gamma,\alpha'')$ are $I^{c}$-convenient and
$$
I\subset I(S(\alpha''),S(\gamma,\alpha''))\cap I(S(\gamma,\alpha''), S(\alpha)),
$$
we obtain $\nu(S(\gamma,\alpha''))=\nu(S(\alpha''))=\nu(S)$.

As $\tau\notin\overline{\Gamma_{+}(S(\gamma))\setminus \Gamma_{+}(S)}\cap \RR^{J}$ and
$\gamma\in \Gamma(S(\alpha))$, there exists a relatively open subset $\Omega$ of the relative interior of
$\overline{\Gamma_{+}(S(\alpha))\setminus\Gamma_{+}(S)}\cap\RR^{I}$ such that $\tau$ belongs to the relative interior of
$$
\left(\Gamma_{+}\left(S(\gamma, \alpha''\right)\right)\setminus \Gamma_{+}\left(S\left(\alpha''\right)\right)\cap R^J
$$
for all $\alpha''\in \Omega$. We obtain
$$
\Gamma_{+}\left(S\left(\alpha''\right)\right)\subsetneqq\Gamma_{+}\left(S\left(\tau,\alpha''\right)\right)\subsetneqq
\Gamma_{+}\left(S\left(\gamma,\alpha''\right)\right).
$$
The closed discrete sets $S\left(\alpha''\right)$, $S\left(\tau,\alpha''\right)$, and $S\left(\gamma,\alpha''\right)$ are $J^{c}$-convenient, and
$$
J\subset I\left(S\left(\alpha''\right),S\left(\tau,\alpha''\right)\right)\cap
I\left(S\left(\tau,\alpha''\right),S\left(\gamma,\alpha''\right)\right).
$$
Hence $\nu\left(S\left(\tau,\alpha''\right)\right)=\nu\left(S\left(\alpha''\right)\right)=\nu(S)$.

Given an edge $E_{\tau}$ of $\Gamma(S(\tau))$ that connects $\tau$ with a vertex in $\Ver{\Gamma(S)}$, let $E'_{\tau}$ be the subsegment of $E_{\tau}$ containing $\tau$ such that $|E'_{\tau}\cap \Ver{S}|=1$. We choose $\alpha''\in\Omega'$ such that for each edge $E_{\tau}$ of $\Gamma(S(\tau))$ connecting $\tau$ with an element of $\Ver{\Gamma(S)}$ we have
$\dim\left(E'_{\tau}\cap\Gamma\left(S\left(\alpha''\right)\right)\right)=0$. In other words, no subsegment of $E'_{\tau}$ is contained in the Newton boundary $\Gamma(S(\alpha''))$.

Let us consider the compact polyhedron $P:=\overline{\Gamma_{+}(S(\tau,\alpha''))\setminus \Gamma_{+}(S(\alpha''))}$. Observe that $\nu(P)=0$ (see Theorem \ref{the:semicont-newton}).

Given the choice of $\alpha''$, there exists $\tau'\in P$ such that
$$
\Gamma_{+}\left(S\left(\tau'\right)\right)\subsetneqq\Gamma_{+}(S(\tau))
$$
and $Q_0:=\overline{(\Gamma_{+}(S(\tau))\setminus \Gamma_{+}(S(\tau')))}\subset P$ (it is for achieving the last inclusion that the choice of $\alpha''$ is really important).

Let $Q_1:=\overline{P\setminus Q_0}$.  As $\dim\left(Q_0^{J'}\cap Q_{1}^{J'}\right)<|J'|$, for all $J'\subset \{1,...,n\}$, we obtain $\nu(P)=\nu(Q_0)+\nu(Q_1)$. The polyhedra $\Q_0$ and $\Q_1$ satisfy the hypotheses of Proposition \ref{pro:p-posit}. In effect:

$(1)$ By construction $Q_0$ and $Q_1$ are pure $n$-dimensional compact polyhedra and $o\not\in P=Q_0\cup Q_1$.

$(2)$ Recall that  $\tau\in\RR^J_{>0}$.  The polyhedron $P$ satisfies
$$
\dim\left(P^{J'}\right)<|J'|\quad\text{ for all }J'\not \supset J,
$$
which implies $\dim\left(Q_0^{J'}\right)<|J'|$ and  $\dim\left(Q_1^{J'}\right)<|J'|$ for all $J'\not \supset J$.

$(3)$ Now we will verify that $Q_{0}^{J'}$  is homeomorphic to a $|J'|$-dimensional closed disk for all $J'\supset J$. As $S$ is $I^c$-convenient and $\tau \in \RR^J_{>0}$, we  have $\dim\left(Q_{0}^{J'}\right)=|J'|$ for each $J'\supset J$. By Remark \ref{re:simple-poly} we obtain that $Q_{0}^{J'}$ is homeomorphic to a $|J'|$-dimensional closed disc.  The proof for $Q_{1}^{J'}$ is analogous to the proof for $Q_{0}^{J'}$.

$(4)$ As $S$ is $I^c$-convenient (in particular $J^c$-convenient), we obtain that if $(\beta_1,..,\beta_n)\in\Ver{P}$ then for each $i\in J^c$ we have $\beta_i\geq 1$ or $\beta_i=0$. This property is inherited by $Q_0$ and $Q_1$.

By Proposition \ref{pro:p-posit} we have $\nu(Q_0)\geq 0$, $\nu(Q_1)\geq 0$. As $\nu(P)=0$, we obtain
$\nu(Q_0)=\nu(Q_1)= 0$. We have $\tau<\tau'\in\mathcal C$, which contradicts the maximality of $\tau$ in $\mathcal C$. As a consequence we obtain $\nu(S(\alpha'))=\nu(S)$.

Now let us suppose that $\alpha' \in \Gamma(S(\alpha))\cap \RR_{>0}^J$, and  let $v\in \RR^J_{>0}$.  For  $\epsilon>0$ small enough  $\alpha_{\epsilon}:=\alpha'+\epsilon v$ belongs to the relative interior of $\Gamma_{+}(S(\alpha'))\setminus \Gamma_{+}(S)$.  For the continuity of $\nu_S$ in $C:=\overline{(\Gamma_{+}(S(\alpha'))\setminus \Gamma_{+}(S))}\cap \RR^J$ we obtain
$$
\lim\limits_{\epsilon \rightarrow 0}\nu_S(\alpha_{\epsilon})=\nu(S(\alpha')),
$$
which implies that $\nu(S(\alpha'))=\nu(S)$.
\end{proof}

\begin{corollary}
\label{cor:edge-lifting}
Let $I\subsetneqq J:= \{1,...,n\}$.  Let $S, S'\subset \RR_{\geq 0}^{n}\setminus\{o\}$ be two convenient closed discrete sets such that $\Gamma_{+}(S)\subsetneqq\Gamma_{+}(S')$ and $\nu(S)=\nu(S')$. Suppose that there exists
$\alpha\in\Verd{S'}{S}\cap\RR^I_{>0}$ and an edge $E_{\alpha} $ of $\Gamma(S)$  that is $(I,J)$-convenient.  Then there exists
$$
(\beta_1,...,\beta_n)\in \Ver{S}\cap E_{\alpha}
$$
such that $\beta_i=1$ for all $i\in I^c$.
  \end{corollary}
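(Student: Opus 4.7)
The strategy is to argue by contradiction, playing Proposition \ref{pro:edge-lifting}(2) against Corollary \ref{cor:ana-verS'S}.

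I would first unpack the hypotheses. With $J=\{1,\ldots,n\}$ one has $J^c=\emptyset$, so $(I,J)$-convenience of $E_\alpha$ reduces to the statement that every $\beta\in E_\alpha\cap\Ver{S}$ satisfies $\beta_i\geq 1$ for each $i\in I^c$. The conclusion we seek --- existence of such a $\beta$ with $\beta_i=1$ for every $i\in I^c$ --- is therefore the negation of ``every $\beta\in E_\alpha\cap\Ver{S}$ has $\beta_i>1$ for some $i\in I^c$'', and the latter is precisely the condition that $E_\alpha$ be \emph{strictly} $(I,J)$-convenient.

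Assuming for contradiction that $E_\alpha$ is strictly $(I,J)$-convenient, my next step is to verify the hypotheses of Proposition \ref{pro:edge-lifting}(2). Since $S$ is convenient it is in particular $I^c$-convenient, and the fact that $\alpha$ is a vertex of $\Gamma(S')$ but not of $\Gamma(S)$ forces $\alpha\notin\Gamma_{+}(S)$; together with $\alpha\in S'$, this gives $\Gamma_{+}(S)\subsetneqq\Gamma_{+}(S(\alpha))\subseteq\Gamma_{+}(S')$. A short check shows $S(\alpha)$ remains convenient (adding a single point $\alpha\in\RR^I_{>0}$ cannot destroy the axis vertices coming from $S$, since such a point is not on any coordinate axis unless $|I|=1$, in which case it simply replaces one axis vertex by another), and then Corollary \ref{cor:semicont-newton}(2) gives $\nu(S(\alpha))=\nu(S)$.

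I would then choose $\alpha':=\alpha+\epsilon\sum_{i\in I^c}e_i$ for $\epsilon>0$ sufficiently small. Then $\alpha'\in\RR^n_{>0}=\RR^J_{>0}$; $\alpha'\in\Gamma_{+}(S(\alpha))$ because $\alpha'-\alpha\in\RR^n_{\geq 0}$ and $\alpha\in\Gamma_{+}(S(\alpha))$; and $\alpha'\notin\Gamma_{+}(S)$ for $\epsilon$ small enough by closedness of $\Gamma_+(S)$ and $\alpha\notin\Gamma_+(S)$. So $\alpha'\in\overline{\Gamma_{+}(S(\alpha))\setminus\Gamma_{+}(S)}\cap\RR^J_{>0}$, and Proposition \ref{pro:edge-lifting}(2) applied with this $\alpha'$ and the strictly $(I,J)$-convenient edge $E_\alpha$ yields $\nu(S(\alpha'))=\nu(S)$. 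On the other hand, Corollary \ref{cor:ana-verS'S} applied to the pre-convenient set $S$ and $\alpha'\in\RR^n_{>0}$ with $\Gamma_{+}(S)\subsetneqq\Gamma_{+}(S(\alpha'))$ gives $\nu(S(\alpha'))<\nu(S)$ --- the desired contradiction.

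I do not anticipate any real obstacle beyond the mechanical verification that $\alpha'$ satisfies all three required properties, each of which is immediate from $\alpha\notin\Gamma_+(S)$ together with closedness of $\Gamma_+(S)$. The substantive analytic content has already been packaged into Proposition \ref{pro:edge-lifting}; this corollary is essentially a convenient specialization of it.
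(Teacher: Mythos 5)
Your proof is correct and follows essentially the same route as the paper's: assume no such $\beta$, note that $E_\alpha$ is then strictly $(I,J)$-convenient (since $J^c=\emptyset$), and invoke Proposition \ref{pro:edge-lifting}(2) to force $\nu$-constancy at a point of $\RR^n_{>0}$, which is impossible. The only cosmetic difference is that the paper adjoins the remaining vertices $R=\Verd{S'}{S}\setminus\{\alpha\}$ and contradicts Proposition \ref{prop:verS'S}, whereas you stay with $S$ and $S(\alpha)$ (reading $E_\alpha$, as one must, as an edge of $\Gamma(S(\alpha))$) and contradict Corollary \ref{cor:ana-verS'S} directly, with the hypotheses of Proposition \ref{pro:edge-lifting} checked more explicitly than in the paper.
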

\begin{proof}
Let $R:= \Verd{S'}{S}\setminus \{\alpha\}$ and $S(R)=S\cup R$. The closed discrete  sets $S,$ $S(R)$ and $S'$ are convenient, and $\Gamma_{+}(S)\subset\Gamma_{+}(S(R))\subsetneqq\Gamma_{+}(S')$. Then
$$
\nu(S(R))=\nu(S').
$$
We argue by contradiction. If there is no $(\beta_1,...,\beta_n)$ as in the Corollary then the edge $E_{\alpha}$ is strictly
$(I,J)$-convenient. By Proposition \ref{pro:edge-lifting}, for all $\alpha'\in \overline{\Gamma_{+}(S(\alpha)\setminus \Gamma_{+}(S)}\cap \RR_{>0}^n$ we have
 $$
 \nu(S(R))=\nu(S(R\cup\{\alpha'\})),
 $$
 which contradicts Proposition \ref{prop:verS'S}.
\end{proof}
The following Proposition allows us to fix a special coordinate hyperplane and gives information about the edges not contained in the hyperplane that contain a vertex of interest belonging to the hyperplane.

\begin{proposition} \label{pro:(b,1)}
Let $S,S'\subset\RR_{\geq 0}^{n}\setminus\{o\}$ be two convenient closed discrete sets such that
$\Gamma_{+}(S)\subsetneqq\Gamma_{+}(S')$ and $\nu(S)=\nu(S')$. Let us suppose that
$$
\alpha\in \Verd{S'}{S}\cap \RR^I_{>0},\qquad I\subsetneqq\{1,..,n\}.
$$
Then there exists $i\in I^c$ such that for all the edges $E_{\alpha}$ of $\Gamma(S')$ not contained in $\RR_{\{i\}}$ there exists
$(\beta_1,...,\beta_n)\in\Ver{S}\cap E_{\alpha}$ such that $\beta_i=1$.
\end{proposition}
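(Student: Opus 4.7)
The plan is to argue by contradiction: assume that for every $i\in I^c$ there exists an edge $E_\alpha^{(i)}$ of $\Gamma(S')$ with $\alpha$ as endpoint, not contained in $\RR_{\{i\}}$, and with no vertex $\beta\in\Ver{S}\cap E_\alpha^{(i)}$ satisfying $\beta_i=1$. The aim is to produce $\alpha^*\in\RR^n_{>0}$ with $\Gamma_+(\tilde{S})\subsetneqq\Gamma_+(\tilde{S}(\alpha^*))$ and $\nu(\tilde{S}(\alpha^*))=\nu(\tilde{S})$, where $\tilde{S}:=S\cup(\Verd{S'}{S}\setminus\{\alpha\})$; this will contradict Corollary \ref{cor:ana-verS'S}.

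The first step is to pass to this auxiliary data and verify that each $E_\alpha^{(i)}$ is strictly convenient. The set $\tilde{S}$ is convenient with $\Gamma_+(\tilde{S})\subsetneqq\Gamma_+(\tilde{S}(\alpha))=\Gamma_+(S')$, and Corollary \ref{cor:semicont-newton} yields $\nu(\tilde{S})=\nu(\tilde{S}(\alpha))=\nu(S)$. For each $i\in I^c$, let $J_i$ be the minimal coordinate subset containing $I$ with $E_\alpha^{(i)}\subset\RR^{J_i}$; necessarily $i\in J_i$. Any vertex $\beta\in\Ver{\tilde{S}}\cap E_\alpha^{(i)}$ in the relative interior of the edge is a convex combination of $\alpha\in\RR^I_{>0}$ and the other endpoint $\gamma^{(i)}$, so $\beta_j>0$ for every $j\in J_i$, whence $\beta_j\geq 1$ by convenience of $\tilde{S}$. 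Thus $E_\alpha^{(i)}$ is automatically $(I,J_i)$-convenient, and the failure hypothesis combined with the convenience of $S$ forces $\beta_i>1$ for every such $\beta$, making $E_\alpha^{(i)}$ strictly $(I,J_i)$-convenient with witness $i\in J_i\setminus I$.

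The second step is to iterate Proposition \ref{pro:edge-lifting}(2) to enlarge the coordinate support of $\alpha$. Initially pick $i_1\in I^c$: applying the proposition with the strictly $(I,J_{i_1})$-convenient edge $E_\alpha^{(i_1)}$ yields a point $\alpha^{(1)}\in\overline{\Gamma_+(\tilde{S}(\alpha))\setminus\Gamma_+(\tilde{S})}\cap\RR^{J_{i_1}}_{>0}$ with $\nu(\tilde{S}(\alpha^{(1)}))=\nu(\tilde{S})$. Inductively, given $\alpha^{(k)}\in\RR^{K_k}_{>0}$ with $I\subset K_k\subsetneqq\{1,\ldots,n\}$ and $\nu(\tilde{S}(\alpha^{(k)}))=\nu(\tilde{S})$, pick $i_{k+1}\in I^c\setminus K_k$ and exhibit, in $\Gamma(\tilde{S}(\alpha^{(k)}))$, a strictly convenient edge at $\alpha^{(k)}$ introducing $i_{k+1}$ into the coordinate support; then Proposition \ref{pro:edge-lifting}(2) (applied with $K_k$ in the role of $I$) produces $\alpha^{(k+1)}$ with strictly larger coordinate support and the same Newton number. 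After at most $|I^c|$ iterations, $\alpha^*:=\alpha^{(N)}\in\RR^n_{>0}$ satisfies $\nu(\tilde{S}(\alpha^*))=\nu(\tilde{S})$ and $\Gamma_+(\tilde{S})\subsetneqq\Gamma_+(\tilde{S}(\alpha^*))$, contradicting Corollary \ref{cor:ana-verS'S}.

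The main obstacle will be the inductive step, namely establishing the persistence of strictly convenient edges when $\alpha$ is deformed to $\alpha^{(k)}$: one must verify that the original edges $E_\alpha^{(i)}$ for $i\in I^c\setminus K_k$ leave analogous strictly convenient edges in $\Gamma(\tilde{S}(\alpha^{(k)}))$ emanating from $\alpha^{(k)}$. This will require a careful local analysis of how the Newton boundary evolves, using that the polyhedra $\Gamma_+(\tilde{S}(\alpha^{(k)}))$ interpolate between $\Gamma_+(\tilde{S})$ and $\Gamma_+(S')$ so that a trace of the original bad edges remains visible in the perturbed Newton boundary.
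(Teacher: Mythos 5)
Your plan (push $\alpha$ into larger and larger coordinate subspaces via Proposition \ref{pro:edge-lifting}(2) while preserving $\nu$, and terminate with Corollary \ref{cor:ana-verS'S}) is the right engine, and it is essentially how the paper proves the single-vertex case; but as written there are two genuine gaps. The first is the reduction to $\widetilde{S}:=S\cup(\Verd{S'}{S}\setminus\{\alpha\})$: strict $(I,J_i)$-convenience of $E_\alpha^{(i)}$ must be checked against \emph{all} of $\Ver{\widetilde{S}}\cap E_\alpha^{(i)}$, whereas your contradiction hypothesis (the negation of the Proposition's conclusion) only controls vertices of $\Ver{S}$ on the edge. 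The second endpoint of $E_\alpha^{(i)}$ may be another new vertex $\gamma\in\Verd{S'}{S}$ with $\gamma_i=1$ (so that $\Ver{S}\cap E_\alpha^{(i)}=\emptyset$ and your hypothesis is vacuous for this edge); then $E_\alpha^{(i)}$ is not strictly $(I,J_i)$-convenient with respect to $\widetilde{S}$, and Proposition \ref{pro:edge-lifting} cannot be invoked. This is not a marginal configuration: the paper handles the many-vertex case by induction on $|\Verd{S'}{S}|$ and spends the entire second half of that induction on precisely this situation, disposing of it by replacing the offending vertex $\alpha'$ by $\alpha'+\epsilon e_i$ and reducing to the case already treated. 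Your one-shot replacement of $S$ by $\widetilde{S}$ silently assumes this difficulty away.

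The second gap is the one you yourself flag as ``the main obstacle'': the persistence, after $\alpha$ has been moved to $\alpha^{(k)}$, of strictly convenient edges for the remaining indices $i\in I^c\setminus K_k$. This is not a technicality to be filled in later; it is the crux, and your proposal does not establish it. Note that the paper never proves such a persistence statement along a fixed chain of perturbations. Instead, Lemma \ref{lem:(b,1)} is proved by an extremal-counterexample argument: take $k$ maximal in $\{1,\dots,n-1\}$ such that the lemma fails for some $I$ with $|I|=k$; a failure yields, via Proposition \ref{pro:edge-lifting}(2) and a choice of $\alpha'\in\RR^J_{>0}$ close enough to $\alpha$ that vertices of $S$ adjacent to $\alpha'$ along edges of $\Gamma(S(\alpha'))$ already lie on edges of $\Gamma(S(\alpha))$, a \emph{new} counterexample with $|J|>k$, contradicting maximality (the terminal case $\alpha\in\RR^n_{>0}$ being excluded by Corollary \ref{cor:ana-verS'S}). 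Reformulating the induction over all counterexamples, rather than following one $\alpha$ and trying to carry all of its bad edges through each perturbation, is exactly what makes the argument close; your iteration needs either this reformulation or a proof of the persistence claim, and in addition a correct treatment of the many-vertex case (the paper's induction on $|\Verd{S'}{S}|$ with the $\epsilon e_i$ trick) in place of the $\widetilde{S}$ shortcut.
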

\begin{proof} First we will prove the following Lemma.

\begin{lemma}
\label{lem:(b,1)} Let $S\subset\RR_{\geq 0}^{n} \setminus\{o\}$ be an $I^c$-convenient closed discrete set and
$$
\alpha\in\RR^I_{>0},\qquad I\subset\{1,..,n\},
$$
such that $\nu(S)=\nu(S(\alpha))$. Then there exists $i\in I^c$ such that for each edge $E_{\alpha}$ of $\Gamma(S(\alpha))$ not contained in $\RR_{\{i\}}$ there exists $(\beta_1,...,\beta_n)\in \Ver{S}\cap E_{\alpha}$ such that  $\beta_i=1$.
\end{lemma}

\begin{proof}[Proof of the Lemma]

By Corollary \ref{cor:ana-verS'S}, we have $|I|<n$. Let $k$ be the greatest element of $\{1,\dots,n-1\}$ such that the Lemma is false for some $I$ with $|I|=k$. In other words, for all $i\in I^c$ there exists an edge $E_{\alpha}$, not contained in $\RR_{\{i\}}$, such that for all $(\beta_1,...,\beta_n)\in \Ver{S}\cap E_{\alpha}$  we have $\beta_i>1$. Let
$J\subset\{1,...,n\}$ be a set  of the smallest cardinality such that $E_{\alpha}\subset \RR^J$.  Then $E_{\alpha}$ is a strictly $(I,J)$-convenient edge.  Using Proposition \ref{pro:edge-lifting} we obtain that for all $\alpha'\in \overline{\Gamma_{+}(S(\alpha)\setminus \Gamma_{+}(S)}\cap \RR_{>0}^J$  we have $\nu(S(\alpha'))=\nu(S)$. Now let us choose $\alpha'$ sufficiently close to $\alpha$ so that for each edge $E_{\alpha'}$ of $\Gamma(S(\alpha'))$, and $\beta\in E_{\alpha'}\cap \Ver{S}$  adjacent to $\alpha'$ in $E_{\alpha'}$, there exists an edge $E_{\alpha}$ of $\Gamma(S(\alpha))$ such that $\beta \in E_{\alpha}$.
Then the closed discrete sets $S$, $S(\alpha')$ are $J^c$-convenient and do not satisfy the conclusion of the Lemma, which is a contradiction, since $|J|>k$.
\end{proof}

The proof of the Proposition is by induction on the cardinality of $\Verd{S'}{S}$. Lemma \ref{lem:(b,1)} says that the Proposition is true whenever $|\Verd{S'}{S}|=1$. Let us assume that the Proposition is true for all $S$, $S'$ such that
$$
|\Verd{S'}{S}|\leq m-1.
$$
Let $S$, $S'$ with $|\Verd{S'}{S}|= m\geq 2$ be such that the Proposition is false. Then there exists $\alpha\in \Verd{S'}{S}$ such that for each  $i\in I^c$ there exists an edge $E_{\alpha}$ of $\Gamma_{+}(S')$, not contained in $\RR_{\{i\}}$, that satisfies the following condition:
\medskip

(*) for all $\beta=(\beta_1,...,\beta_n)\in \Ver{S}\cap E_{\alpha}$ we have $\beta_i>1$;
\medskip

\noindent note that condition (*) is vacuously true if
\begin{equation}
\Ver{S}\cap E_{\alpha}=\emptyset.\label{eq:intersectionempty}
\end{equation}
Observe that for each $\alpha'\in\Verd{S'}{S}\setminus\{\alpha\}$, we have $|\Verd{S'}{S(\alpha')}|=m-1$ and, by Corollary \ref{cor:homot}, $\nu(S(\alpha'))=\nu(S')$.

First, let us suppose that there exists $i\in I^c$ such that (\ref{eq:intersectionempty}) does not hold for the corresponding edge $E_\alpha$. Let us fix $\alpha'\in \Verd{S'}{S}\setminus\{\alpha\}$. Then $E_\alpha$ connects $\alpha$ with a vertex $\beta$ of $S$, hence $\alpha'\notin E'$, where $E'\subset E_{\alpha}$ is the line segment with endpoints
$\alpha$ and $\beta$.  We obtain that the polyhedra $\Gamma_{+}(S(\alpha'))\subsetneqq\Gamma_{+}(S')$ do not satisfy the conclusion of the Proposition, which contradicts the induction hypothesis.

Next, let us suppose that there exists $i\in I^c$ such that (\ref{eq:intersectionempty}) is satisfied for the corresponding edge $E_\alpha$.   Then $|E_{\alpha}\cap \Verd{S'}{S}|=2$. Now, take
$$
\alpha''=(\alpha'_1,...,\alpha'_n)\in E_{\alpha}\cap\Verd{S'}{S}
$$
such that $\alpha'\neq\alpha$. If $\alpha'_i>1$, then the Newton polyhedra
$$
\Gamma_{+}(S(\alpha'))\subsetneqq\Gamma_{+}(S')
$$
do not satisfy the Proposition and (\ref{eq:intersectionempty}) does not hold, which is a contradiction. Hence
$\alpha'_i=1$. Let  $\epsilon>0$ be such that
$$
\alpha'_{\epsilon}:=\alpha'+\epsilon e_i\in\left(\Gamma_{+}\left(S'\right)\setminus\Gamma_{+}(S)\right).
$$
Put
$$
R:=(\Verd{S'}{S}\setminus\{\alpha'\})\cup\{\alpha'_{\epsilon}\}.
$$
Then $\Gamma_{+}(S)\subsetneqq \Gamma_{+}(S(\alpha'_{\epsilon}))) \subsetneqq \Gamma_{+}(S(R)))\subsetneqq \Gamma_{+}(S')$.

The closed discrete sets $S$, $S(\alpha'_{\epsilon})$, $S(R)$, and $S'$ are convenient. We have
$$
\nu(S(\alpha'_{\epsilon}))=\nu(S(R))=\nu(S).
$$
Let us assume that $\epsilon$ is small enough so that there exists an edge $E'_{\alpha}\ni \alpha$ of  $\Gamma(S(R))$ such that
$\alpha'_{\epsilon}\in E'_{\alpha}$.  Then the Newton polyhedra $\Gamma_{+}(S) \subsetneqq \Gamma_{+}(S(R))$ satisfy the preceding case (namely, $\alpha'_i>1$). This completes the proof of the Proposition.
\end{proof}

\begin{corollary}
\label{cor:(b,1)-1}
Assume given two convenient closed discrete sets

$$
S, S'\subset \RR_{\geq 0}^{n}\setminus\{o\}
$$
such that $\Gamma_{+}(S)\subsetneqq\Gamma_{+}(S')$ and $\nu(S)=\nu(S')$. Assume that
$$
\alpha\in \Verd{S'}{S}\cap \RR^I_{>0}.
$$
The for the $i\in I^c$ of Proposition \ref{pro:(b,1)} there exists an edge $E_{\alpha}$ of
$\Gamma(S')$, and $(\beta_1,...,\beta_n)\in E_{\alpha}\cap \Ver{S}$,  such that $\beta_j=\delta_{ij}$, $j\in I^c$, where
$\delta_{ij}$ is the Kronecker delta.
\end{corollary}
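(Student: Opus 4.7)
Apply Proposition \ref{pro:(b,1)} to $S$, $S'$, and $\alpha\in\Verd{S'}{S}\cap\RR^I_{>0}$ to obtain the index $i\in I^c$ such that every edge $E_\alpha$ of $\Gamma(S')$ not contained in $\RR_{\{i\}}$ has some vertex $\beta\in E_\alpha\cap\Ver{S}$ with $\beta_i=1$. It is enough to find, among these edges, one for which $\beta$ also satisfies $\beta_j=0$ for all $j\in I^c\setminus\{i\}$; since $\alpha\in\RR^I\subset\RR^{I\cup\{i\}}$, this is equivalent to requiring $E_\alpha\subset\RR^{I\cup\{i\}}$.

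I would argue by contradiction, modelled on the proof of Corollary \ref{cor:edge-lifting}. Set $R:=\Verd{S'}{S}\setminus\{\alpha\}$; by the sandwich property (Corollary \ref{cor:semicont-newton}), $\nu(S(R))=\nu(S(R\cup\{\alpha\}))=\nu(S)$. Assume the conclusion fails. Then for every edge $E_\alpha$ of $\Gamma(S')$ not contained in $\RR_{\{i\}}$, every $\beta\in E_\alpha\cap\Ver{S}$ with $\beta_i=1$ has $\beta_j\geq 1$ for at least one $j\in I^c\setminus\{i\}$. Fix one such edge $E_\alpha$, and let $J\subset\{1,\dots,n\}$ be the smallest coordinate index set with $E_\alpha\subset\RR^J$; then $I\cup\{i\}\subsetneq J$, and (after possibly discarding elements of $\Verd{S'}{S}\setminus\{\alpha\}$ to preserve $\nu$) $E_\alpha$ is strictly $(I,J)$-convenient with respect to $S(R)$.

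Now apply Proposition \ref{pro:edge-lifting}(2) to $S(R)$, $\alpha$, and $J$: for any sufficiently generic $\alpha_1\in\RR^J_{>0}\cap\overline{\Gamma_+(S')\setminus\Gamma_+(S(R))}$, the point $\alpha_1$ becomes a new vertex with $\nu(S(R\cup\{\alpha_1\}))=\nu(S)$. If $J=\{1,\dots,n\}$, this contradicts Proposition \ref{prop:verS'S} immediately. Otherwise, iterate: apply Proposition \ref{pro:(b,1)} to the new pair $(S,\,S(R\cup\{\alpha_1\}))$ at $\alpha_1\in\RR^J_{>0}$ to obtain a fresh index in $J^c$, then apply Proposition \ref{pro:edge-lifting}(2) once more to produce a new vertex $\alpha_2$ with support in an even larger coordinate index set. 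After at most $|I^c|-1$ such iterations, one arrives at a new vertex $\alpha^*\in\RR^n_{>0}$ preserving the Newton number, contradicting Proposition \ref{prop:verS'S}.

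The principal obstacle is verifying strict $(I,J)$-convenience of the relevant edge at each stage of the iteration: the borderline configurations in which every $\beta$ on the edge has its coordinates in $J\setminus I$ exactly equal to $1$ (so that no coordinate is strictly greater than $1$) must be ruled out separately, possibly by a slight perturbation of $\alpha$ or by invoking the convenience of $S$ more carefully. Managing the nested index sets $I\subsetneq J\subsetneq\cdots$ through the iteration is a secondary bookkeeping challenge, but one which runs in parallel with the argument already used in the proof of Corollary \ref{cor:edge-lifting}.
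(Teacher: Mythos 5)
Your reduction of the statement to finding an edge of $\Gamma(S')$ at $\alpha$ contained in $\RR^{I\cup\{i\}}$ is correct, but the contradiction-plus-iteration scheme you build on Proposition~\ref{pro:edge-lifting}(2) has a genuine gap, and it is precisely the one you flag as ``the principal obstacle''. Taking $J$ to be the minimal coordinate support of the chosen edge, plain $(I,J)$-convenience of $E_\alpha$ with respect to $S(R)$ is indeed automatic, but \emph{strict} $(I,J)$-convenience requires every $\beta\in E_\alpha\cap\Ver{S(R)}$ to have some coordinate in $J\setminus I$ strictly greater than $1$. The negation of the Corollary, combined with Proposition~\ref{pro:(b,1)}, only yields, for each $\beta$ with $\beta_i=1$, some $j\in I^c\setminus\{i\}$ with $\beta_j\geq 1$; the borderline configuration $\beta_i=\beta_j=1$ is not a degenerate case that a perturbation can remove --- it is the \emph{expected} configuration when $\nu(S)=\nu(S')$ (good apexes have $i$-th coordinate exactly $1$, cf.\ Theorem~\ref{the:car-nu-constant}), and perturbing $\alpha$ cannot help because the offending coordinates belong to the fixed vertex $\beta\in\Ver{S}$. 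Worse, the same strictness hypothesis is needed again at every later stage: to produce $\alpha_2$ from $\alpha_1$ you must exhibit a strictly convenient edge at $\alpha_1$, and your standing hypothesis (failure of the conclusion \emph{at $\alpha$}) says nothing about $\alpha_1$. So the appeal to Proposition~\ref{pro:edge-lifting}(2) is unjustified at the first step and at every subsequent one; ruling out the all-coordinates-equal-to-one case is essentially the hard content already packaged into Lemma~\ref{lem:(b,1)} and Proposition~\ref{pro:(b,1)}, and your proposal does not supply it.

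The paper's own proof needs none of this machinery. Since $S$ is convenient there is $m$ with $me_i\in\Ver{S}$; both $\alpha$ and $me_i$ lie in $\RR^{J}$ with $J=I\cup\{i\}$, and $\Gamma_{+}(S')\cap\RR^{J}$ is a face of $\Gamma_{+}(S')$, so there is a chain of edges of $\Gamma(S')$ contained in $\RR^{J}$ joining $\alpha$ to $me_i$. The edge of this chain at $\alpha$ (not contained in $\RR_{\{i\}}$) is the desired one: containment in $\RR^{J}$ forces $\beta_j=0$ for $j\in I^c\setminus\{i\}$ for any $\beta$ on it, and Proposition~\ref{pro:(b,1)} supplies $\beta\in\Ver{S}$ on that edge with $\beta_i=1$. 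In other words, once Proposition~\ref{pro:(b,1)} is available, the Corollary follows from convenience of $S$ and connectivity of the Newton boundary inside the coordinate subspace $\RR^{I\cup\{i\}}$, with no contradiction argument, no vertex-lifting, and no induction on supports.
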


\begin{proof}[Proof of the Corollary]
By Proposition \ref{pro:(b,1)} there exists $i\in I^c$  such that for all the edges $E_{\alpha}$ of $\Gamma(S')$, not contained in
$\RR_{\{i\}}$, there exists
$$
(\beta_1,...,\beta_n)\in \Ver{S}\cap E_{\alpha}
$$
such that $\beta_i=1$. Since the set $S$  is convenient, there exists $m>1$ such that $me_i\in\Ver{S}$. Let $J=I\cup\{i\}$. Since $\alpha,me_i\in\RR^J$, there exists a chain of edges of $\Gamma(S')$ connecting $\alpha$ with $me_i$, contained in $\RR^J$. The edge $E_\alpha$ belonging to this chain and containing $\alpha$ satisfies the conclusion of the Corollary.
\end{proof}
\begin{remark}
\label{rem:(b,1)-1}
Using the same idea as in Corollary \ref{cor:(b,1)-1}, but using Lemma \ref{lem:(b,1)} instead of Proposition
\ref{pro:(b,1)}, we can prove the following fact: Let $I\subsetneqq\{1,...,n\}$ and let
$S\subset\RR_{\geq 0}^{n}\setminus\{o\}$ be an $I^c$-convenient closed discrete set. Let $\alpha\in \RR^I_{>0}$ be such that $\Gamma_{+}(S)\subsetneqq\Gamma_{+}(S(\alpha))$, and $\nu(S)=\nu(S')$. Then for the $i\in I^c$  of Lemma \ref{lem:(b,1)} there exists an edge $E_{\alpha}$ of $\Gamma(S(\alpha))$, and $(\beta_1,...,\beta_n)\in E_{\alpha}\cap \Ver{S}$, such that $\beta_j=\delta_{ij}$, $j\in I^c$, where $\delta_{ij}$ is the Kronecker delta.
\end{remark}

The following Theorem generalizes to all dimensions the main theorem of \cite{BKW19}. In \cite{BKW19} this result is conjectured.\\

\begin{definition}
\label{def:good-apex}
 Let $S, S'\subset\RR_{\geq 0}^{n}\setminus\{o\}$  be two  closed discrete sets such that
 $$
 \Gamma_{+}(S)\subsetneqq\Gamma_{+}(S'),
 $$
 $I\subsetneqq \{1,...,n\}$ and $\alpha\in\Verd{S'}{S}\cap \RR_{>0}^{I}$. We will say that $\alpha$ {\it  has an apex} if there exists $i\in I^c $ and a unique edge  $E_{\alpha}$ of $\Gamma(S')$ that contains $\alpha$, and is not contained in $\RR_{\{i\}}$. \\

In this case the point  $\beta\in \Ver{S}\cap E_{\alpha}$ adjacent to $\alpha$ in $E_{\alpha}$ is called the {\it apex} of $\alpha$.  We will say that an apex, $\beta:=(\beta_1,...,\beta_n)$, is {\it good} if $\beta_j=\delta_{ij}$, $j\in I^c$.
\end{definition}

\begin{remark}
Let $S\subset \RR_{\geq 0}^{n}\setminus\{o\}$ be  a convenient closed discrete set,

$$
I\subsetneqq\{1,...,n\}
$$
and $\alpha\in\RR_{>0}^I$ such that $\Gamma_{+}(S)\subsetneqq\Gamma_{+}(S(\alpha))$. The condition that $\alpha$ has a good apex $\beta\in \RR^{I\cup\{i\}}_{>0}$, $i\in I^c $, is equivalent to
$P:=\overline{\Gamma_{+}(S(\alpha))\setminus\Gamma_{+}(S)}$ being a pyramid  with apex $\beta$  and base $P\cap\RR_{\{i\}}$.
\begin{example}

For Example, in the case of the $\mu$-constant  deformation of  Brian\c{c}on Speder convenient version,
 (see \cite{BrSp75}),
 $$
F(x,y,z,s):= x^{5}+y^{7}z+z^{15}+y^{8}+sxy^{6},
$$ the pyramid is formed by the base with vertices $(5,0,0)$, $(0,8,0)$, $\alpha=(1,6,0)$, and the good apex
$\beta=(0,7,1)$. Note that the vertices $(5,0,0)$, $(0,7,1)$, $(1,6,0)$, $(0,0,15)$ are coplanar.
\end{example}

\end{remark}

 \begin{remark} The element $i\in I^{c}$ of condition $(2)$ of  Definition \ref{def:good-apex} may not be unique, and the  edge  $E_{\alpha}$ is unique only for the chosen $i$.  For example, if $S=\{(2,0,0),(0,2,0), (1,0,1), (0,1,1), (0,0,3)\}$ and $\alpha=(0,0,2)$, we have $I=\{3\}$.

 If $i=1$, the unique edge $E_{\alpha}$ is the segment between the point  $\alpha=(0,0,2)$ and the good apex $\beta=(1,0,1)$.

 If $i=2$, the unique edge $E_{\alpha}$ is the segment between the point  $\alpha=(0,0,2)$ and the good apex
 $\beta=(0,1,1)$.
 \end{remark}

\begin{theorem}
\label{the:car-nu-constant}
Let $S, S'\subset \RR_{\geq 0}^{n}\setminus\{o\}$  be two convenient closed discrete sets such that
$\Gamma_{+}(S)\subsetneqq\Gamma_{+}(S')$. Then $\nu(S)=\nu(S')$ if and only if each $\alpha\in\Verd{S'}{S}$ has a good apex.
\end{theorem}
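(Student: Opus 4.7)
The plan is to prove the two implications of the biconditional separately.

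For the direction $(\Leftarrow)$, suppose each $\alpha \in \Verd{S'}{S}$ has a good apex and proceed by induction on $|\Verd{S'}{S}|$. For the base case $S' = S \cup \{\alpha\}$, Corollary \ref{cor:semicont-newton}(1) reduces the claim to showing that the polyhedron $P := \overline{\Gamma_{-}(S) \setminus \Gamma_{-}(S')} = \overline{\Gamma_{+}(S') \setminus \Gamma_{+}(S)}$ has $\nu(P) = 0$. By the Remark following Definition \ref{def:good-apex}, $P$ is a pyramid with apex $\beta$ (where $\beta_i = 1$ and $\beta_j = 0$ for $j \in I^c \setminus \{i\}$) and base $B := P \cap \RR_{\{i\}}$, of height $1$ in the $x_i$-direction; hence $n!\, V_n(P) = (n-1)!\, V_{n-1}(B)$. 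I would then verify by induction on $n$ that the alternating sum defining $\nu(P)$ collapses: the coordinate face $P \cap \RR_{\{i\}} = B$ contributes precisely $(n-1)!\, V_{n-1}(B)$ to $(n-1)!\, V_{n-1}(P)$, cancelling $n!\, V_n(P)$, while the other facial contributions are themselves alternating sums that vanish by the inductive hypothesis applied to the lower-dimensional pyramids $P \cap \RR_{\{j\}}$ with apex $\beta$. For the induction step I would select $\alpha \in \Verd{S'}{S}$ that can be adjoined to $S$ without disturbing the good-apex structure of the remaining vertices of $\Verd{S'}{S}$, then combine the base case applied to $(S, S \cup \{\alpha\})$ with the inductive hypothesis applied to $(S \cup \{\alpha\}, S')$.

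For the direction $(\Rightarrow)$, suppose $\nu(S) = \nu(S')$ and fix $\alpha \in \Verd{S'}{S} \cap \RR^I_{>0}$. Condition (1) of Definition \ref{def:good-apex}, that $I \subsetneqq \{1, \dots, n\}$, follows from Proposition \ref{prop:verS'S}. Proposition \ref{pro:(b,1)} yields $i \in I^c$ such that every edge $E_\alpha$ of $\Gamma(S')$ at $\alpha$ not contained in $\RR_{\{i\}}$ meets $\Ver{S}$ at a point with $i$-th coordinate $1$, and Corollary \ref{cor:(b,1)-1} then produces such an edge $E_\alpha$ containing a vertex $\beta \in \Ver{S}$ with $\beta_j = \delta_{ij}$ for all $j \in I^c$, furnishing a candidate good apex. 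It remains to establish condition (2): the uniqueness of the edge $E_\alpha$ not contained in $\RR_{\{i\}}$ and the fact that $\beta$ is the endpoint of $E_\alpha$ adjacent to $\alpha$.

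The main obstacle is proving this uniqueness. I would argue by contradiction: if two distinct edges of $\Gamma(S')$ at $\alpha$ lay outside $\RR_{\{i\}}$, then each would carry, by Proposition \ref{pro:(b,1)}, a vertex of $\Ver{S}$ with $i$-th coordinate $1$, producing a strictly $(I, I \cup \{i\})$-convenient configuration in the sense of the definitions preceding Proposition \ref{pro:edge-lifting}. The perturbation technique of that proposition then yields $\alpha' \in \overline{\Gamma_{+}(S') \setminus \Gamma_{+}(S)} \cap \RR^{I \cup \{i\}}_{>0}$ whose adjunction to $S$ strictly decreases the Newton number by Corollary \ref{cor:ana-verS'S}, contradicting the equality $\nu(S) = \nu(S')$. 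A similar perturbation along $E_\alpha$ rules out intermediate vertices of $\Ver{S}$ between $\alpha$ and its adjacent endpoint, forcing that endpoint to be $\beta$.
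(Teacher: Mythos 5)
Your ``if'' direction is essentially the paper's: your pyramid computation is the content of Lemma \ref{lem:good-apex-nu-constant} (the paper organizes the cancellation by pairing each coordinate set $J\not\ni i$ with $J\cup\{i\}$ rather than by induction on $n$, but it is the same height-one pyramid identity), and the induction on $|\Verd{S'}{S}|$ matches the paper, modulo the unproved claim that some $\alpha$ can be adjoined ``without disturbing the good-apex structure'' --- a point the paper handles by removing $\alpha$ and applying the Lemma to the pair $(S(R),S')$, so that the relevant edge of $\Gamma(S')$ at $\alpha$ is untouched. The ``only if'' direction, however, has a genuine gap exactly at the step you call the main obstacle. If two edges of $\Gamma(S')$ at $\alpha$ lie outside $\RR_{\{i\}}$, then by Proposition \ref{pro:(b,1)} each carries a vertex of $\Ver{S}$ with $i$-th coordinate \emph{equal to} $1$; by the definition preceding Proposition \ref{pro:edge-lifting}, this is precisely what makes the edges \emph{not} strictly $(I,I\cup\{i\})$-convenient (strictness requires some coordinate in $J\setminus I$ to exceed $1$ for \emph{every} vertex of $\Ver S$ on the edge). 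So the hypothesis of Proposition \ref{pro:edge-lifting}(2) is not available, and your configuration is, by itself, not contradictory --- it is exactly the configuration that the hard Lemma \ref{lem:nu-constant-good-apex-} of the paper must rule out by real work. Moreover, even granting a lift of $\alpha$ into $\RR^{I\cup\{i\}}_{>0}$, there is no contradiction to be had there: Corollary \ref{cor:ana-verS'S} requires the adjoined point to lie in the full open orthant $\RR^{n}_{>0}$, and Proposition \ref{prop:verS'S} only forbids new vertices in $\RR^n_{>0}$; a new vertex in a proper coordinate subspace is perfectly compatible with $\nu(S)=\nu(S')$. Note also that the conclusion of Proposition \ref{pro:edge-lifting} is that $\nu$ does \emph{not} change under the lift --- the contradiction in the paper comes from playing this against Proposition \ref{prop:verS'S} only once the lifted point reaches $\RR^n_{>0}$ (as in Corollary \ref{cor:edge-lifting}, which needs $J=\{1,\dots,n\}$), not from a ``strict decrease'' as you assert.

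What your sketch is missing is the actual mechanism of Lemma \ref{lem:nu-constant-good-apex-}: perturb the offending adjacent vertex $\beta'$ (with $\beta'_i=1$) to $\beta'_\epsilon=\beta'+\epsilon e_i$, forming $S^\epsilon$, so that the edge \emph{becomes} strictly $(I,J)$-convenient for the modified set; then show that the Newton-number equality survives this perturbation, via the decomposition of $P^\epsilon=\overline{\Gamma_+(S^\epsilon(\alpha))\setminus\Gamma_+(S^\epsilon)}$ into the cone $Q_0$ over $\{\beta\}$ (with $\nu(Q_0)=0$ by the good-apex computation) and $Q_1^\epsilon$, the positivity decomposition of Proposition \ref{pro:p-posit}, and a continuity argument in $\epsilon$; only then does Proposition \ref{pro:edge-lifting} apply, and the contradiction is reached by a downward induction on $|I|$ (first showing $|I|<n-1$ via Corollary \ref{cor:ana-verS'S}), not in one step. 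Your treatment of adjacency (``a similar perturbation rules out intermediate vertices'') is likewise unsubstantiated, and the reduction of the general case $|\Verd{S'}{S}|=m$ to the one-vertex Lemma also requires the $(1+\epsilon)$-scalings $\alpha_\epsilon$, $R_\epsilon$ of the paper to guarantee that the apex produced for the intermediate pair is a vertex of $S$ and that the relevant edges are edges of $\Gamma(S')$. As written, the ``only if'' direction does not go through.
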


\begin{proof}
First we will prove the following Lemma.

\begin{lemma}
\label{lem:good-apex-nu-constant}
Let $S\subset \RR_{\geq 0}^{n}\setminus\{o\}$ be a closed discrete set and $\alpha\in \RR_{>0}^I$,
$I\subsetneqq\{1,...,n\}$, such that $\Gamma_{+}(S)\subsetneqq\Gamma_{+}(S(\alpha))$, and $\alpha$ has a good apex. Then
$$
\nu(S(\alpha))=\nu(S).
$$
\end{lemma}
\begin{proof}[Proof of the lemma]
Let $\beta$ be a good apex of $\alpha$. Let $i\in I^c$ be such that $\beta\in\RR_{>0}^{I\cup\{i\}}$.

Given an element $m\in\{1,\dots,n\}$ and $J\subset \{1,...,n\}$ such that  $|J|=m$, we will use the notation
$$
V_m(\alpha, J)=\vol{\Gamma_{-}(S(\alpha))\cap \RR^{J}}{m}-\vol{\Gamma_{-}(S)\cap \RR^{J}}{m}
$$
As $\alpha\in \RR^{I}_{>0}$, we have

\begin{center}

$\begin{array}{rl}
\nu(S(\alpha))-\nu(S)= & \sum\limits_{m=|I|}^{n}(-1)^{m} \sum \limits_{\tiny \begin{array}{c} |J|=m\\  I\subset J \end{array}}|J|!V_{m}(\alpha,J)\\
 =& \sum\limits_{m=|I|}^{n-1}(-1)^{m}\hspace{-0.5cm}\sum\limits_{\tiny \begin{array}{c} |J|=m\\ i\notin J, I\subset J \end{array}} \hspace{-0.5cm} (|J|!V_{m}(\alpha,J)-(|J|+1)!V_{m+1}(\alpha,J\cup\{i\}))
\end{array}$

 \end{center}
 As the apex of $\alpha$ is good, we obtain
 $$
 |J|!V_{m}(\alpha,J)=(|J|+1)!V_{m+1}(\alpha,J\cup\{i\})
 $$
 which implies that $\nu(S)=\nu(S(\alpha))$.
 \end{proof}

Now we will prove that if each $\alpha \in \Verd{S'}{S}$ has a good apex, then
$$
\nu(S)=\nu(S').
$$
The proof is by induction on the cardinality of $\Verd{S'}{S}$.  Let us assume that the implication is true for all $S$ and $S'$ such that $|\Verd{S'}{S}|<m$. To verify the implication for $|\Verd{S'}{S}|=m$, let $\alpha\in\Verd{S'}{S}$ and
$R=\Verd{S'}{S}\setminus \{\alpha \}$. By the induction hypothesis $\nu(S(R))=\nu(S)$ and by Lemma
\ref{lem:good-apex-nu-constant} we have $\nu(S')=\nu(S(R))$. This proves that $\nu(S')=\nu(S)$.\\

To finish the proof of the Theorem we need the following Lemma.

\begin{lemma}
  \label{lem:nu-constant-good-apex-}
   Let $S\subset\RR_{\geq 0}^{n}\setminus\{o\}$ be a closed discrete set and let $\alpha\in \RR_{>0}^I$, $I\subsetneqq\{1,...,n\}$, be such that $\Gamma_{+}(S)\subsetneqq\Gamma_{+}(S(\alpha))$. Let us suppose that $S(\alpha)$ is $I^c$-convenient   and that $\nu(S(\alpha))=\nu(S)$. Then $\alpha$ has a good apex.
  \end{lemma}
   \begin{proof}[Proof of the Lemma]

Let $i\in I^c$ be as in Remark \ref{rem:(b,1)-1}. Then there exists $E_{\alpha}$  of  $\Gamma(S(\alpha))$, and
$\beta :=(\beta_1,...,\beta_n)\in E_{\alpha}\cap \Ver{S}$, such that $\beta_j=\delta_{ij}$, $j\in I^c$. We want to prove that
$\beta$ is a (necessarily good) apex of $\alpha$. Let us assume that $\beta$ is not an apex of $\alpha$, aiming for contradiction. Then there exits another edge $\alpha\in E'_{\alpha}$ de $\Gamma(S(\alpha))$, and $\beta':=(\beta'_1,...,\beta'_n)\in E'_{\alpha}\cap \Ver{S}$ adjacent to $\alpha$ in $E'_{\alpha}$ such that  $\beta'_i=1$.

Let us consider $\beta'_{\epsilon}:= \beta'+\epsilon e_i$, and the closed discrete set
$S^{\epsilon}=(S\setminus \{\beta'\})\cup\{\beta'_{\epsilon}\}$, $\epsilon> 0$. Let us assume that $\epsilon$ is small enough so that:

 \begin{enumerate}
  \item $\Ver{S^{\epsilon}}=(\Ver{S}\setminus\{\beta'\})\cup\{\beta'_{\epsilon}\}$
  \item There exists an edge $E_{\alpha}^{\epsilon}$ of $\Gamma(S^{\epsilon}(\alpha))$ such that $\beta_{\epsilon}\in
E_{\alpha}^{\epsilon}\cap\Ver{S^{\epsilon}}$ is adjacent to $\alpha$ in $E_{\alpha}^{\epsilon}$.
\end{enumerate}

Let $P^{\epsilon}=\overline{(\Gamma_{+}(S^{\epsilon}(\alpha)\setminus \Gamma_{+}(S^{\epsilon}))}$. Let $Q_{0}$ be the convex hull of the set
$$
\{\beta\}\cup\left( P^{\epsilon}\cap\RR_{\{i\}}\right)
$$
(observe that $Q_0$ does not depend on $\epsilon$) and $Q^{\epsilon}_1:=\overline{P^{\epsilon}\setminus Q_0}$. Recall that $\beta$ satisfies $\beta_j=\delta_{ij}$, $j\in I^c$. Then, using the same idea as in the proof of Lemma \ref{lem:good-apex-nu-constant} we obtain $\nu(Q_0)=0$. As
$\dim\left (Q_{0}^{J}\cap\left (Q^{\epsilon}_1\right )^{J} \right)<|J|$ for all $J\subset \{1,...,n\}$, we have
\begin{center}
$\nu(P^{\epsilon})=\nu(Q_0)+\nu(Q^{\epsilon}_1)$. Then $\nu(P^{\epsilon})=\nu(Q^{\epsilon}_1)$.
\end{center}

As $S^{\epsilon}(\alpha)$  is $I^c$-convenient, $Q^{\epsilon}_1$ satisfies the hypotheses of Proposition \ref{pro:p-posit} (to prove this statement use the same idea as in the proof of Proposition \ref{pro:edge-lifting}). Let us consider the sequence
$$
I\cup\{i\}\subset I_1,I_2,....,I_m\subset \{1,...,n\},
$$
and the polyhedra $Z^{\epsilon}_{j}$, $1\leq j\leq m$, such that

\begin{enumerate}
 \item $Q^{\epsilon}_1=\bigcup\limits_{j=1}^{m}Z^{\epsilon}_{j}$
 \item $\nu(Q^{\epsilon}_1)=\sum\limits_{j=1}^{m}\nu\left(Z^{\epsilon}_{j}\right)$
 \item $\nu\left(Z^{\epsilon}_{j}\right)=|I_{j}|!V_{|I_{j}|}\left(\left(Z^{\epsilon}_{j}\right)^{I_{j}}\right)\nu\left(\pi_{I_{j}}\left(Z^{\epsilon}_{j}\right)\right)\geq0$
\end{enumerate}
(the existence of these objects is given by Proposition \ref{pro:p-posit}). For each $j$, $1\le j\le m$, we may choose the family $Z_j^\epsilon$ of polyhedra to vary continuously with $\epsilon$. More precisely, we can choose the $Z_j^\epsilon$ to satisfy the following additional condition: for  each $j$, $1\le j\le m$, either $Z_j^\epsilon=Z_j^0$ for all small $\epsilon$ or $\Ver{Z_j^\epsilon}$ differs from $\Ver{Z_j^0}$ in exactly one element, $\beta'_\epsilon\ne\beta'$, for all small $\epsilon>0$. Since $i\in I_j$, we have
$\pi_{I_j}( \beta'_{\epsilon})=\pi_{I_j}(\beta')$. This implies that $\nu\left(\pi_{I_{j}}\left(Z^{\epsilon}_{j}\right)\right)$ is independent of $\epsilon$ for all $1\leq{j}\leq m$. For $\epsilon=0$, we have
$$
\nu\left(\pi_{I_{j}}\left(Z^{0}_{j}\right)\right)=0.
$$
Hence $\nu(P^{\epsilon})=\nu(Q_1^{\epsilon})=0$ for $\epsilon$ small enough. Then there exists a set $J$,
$\{i\}\cup I\subset J\subset \{1,2,...,n\}$, such that the edge $E^{\epsilon}_{\alpha}$ is strictly $(I,J)$-convenient. By Proposition \ref{pro:edge-lifting}, given
$\alpha'\in\overline{\Gamma_{+}(S^{\epsilon}(\alpha))\setminus\Gamma_{+}(S^\epsilon)}\cap\RR_{>0}^J$ we have
$$
\nu(S^{\epsilon}(\alpha'))=\nu(S^{\epsilon}).
$$

This proves that $|I|<n-1$: indeed, if $|I|=n-1$, then $\alpha'\in \RR_{>0}^n$, which contradicts Proposition \ref{cor:ana-verS'S}.

Let $r$ be the largest element of $\{1,\dots,n-1\}$ such that the Lemma is true for all $I$  such that $|I|>r$. Now let us assume that $|I|=r$. Let us choose $\alpha'$ sufficiently close to $\alpha$ so that for each edge $E_{\alpha'}$ of $\Gamma(S^{\epsilon}(\alpha'))$ and
$$
\beta\in E_{\alpha'}\cap \Ver{S^{\epsilon}}
$$
adjacent to $\alpha'$ in $E_{\alpha'}$, there exists an edge $E_{\alpha}$ of $\Gamma(S^{\epsilon}(\alpha))$ such that $\beta \in E_{\alpha}$. This implies that $\alpha'$ does not have a good apex, which contradicts the choice of $r$, since $|J|>r$. This completes the proof of the Lemma.\end{proof}

Now we can finish the proof of the Theorem. We will prove that if
$$
\nu(S)=\nu(S')
$$
then each $\alpha \in \Verd{S'}{S}$ has a good apex. The proof is by induction on the cardinality of $\Verd{S'}{S}$. Lemma \ref{lem:nu-constant-good-apex-} shows that the implication is true for $|\Verd{S'}{S}|=1$. Let us assume that this is true for every pair $(S,S')$ of convenient closed discrete sets such that $|\Verd{S'}{S}|<m$.  Let us prove the result for
$|\Verd{S'}{S}|=m$. Let $\alpha\in\Verd{S'}{S}$, $R=\Verd{S'}{S}\setminus \{\alpha \}$ and
$\alpha_{\epsilon}=(1+\epsilon)\alpha$, where $\epsilon>0$. Then
$$
\Gamma_{+}(S)\subseteq\Gamma_{+}(S(\alpha_{\epsilon}))\subsetneqq\Gamma_{+}(S(\alpha_{\epsilon})(R))\subseteq\Gamma_{+}(S').
$$
By Corollary \ref{cor:semicont-newton} we have $\nu(S(\alpha_{\epsilon})(R))=\nu(S(\alpha_{\epsilon}))$.  Observe that
$$
|\Verd{S(\alpha_{\epsilon})(R)}{S(\alpha_{\epsilon})}|\leq m-1.
$$
By the induction hypothesis,  each $\alpha'\in R$ has a good apex $\beta\in\Ver{S(\alpha_{\epsilon})}$ for the inclusion
$\Gamma_{+}(S(\alpha_{\epsilon}))\subsetneqq\Gamma_{+}(S(\alpha_{\epsilon})(R))$ of Newton polyhedra. Since all the non-zero coordinates of $\alpha_\epsilon$ are strictly greater than $1$, we have $\beta\neq\alpha_{\epsilon}$, so that $\beta\in\Ver{S}$. We take $\epsilon$ small enough so that for every $\alpha'\in R$ every edge $E_{\alpha'}$ of
$\Gamma(S(\alpha_{\epsilon})(R))$ that connects $\alpha'$ with a vertex in $\Ver{S}$  is an edge of $\Gamma(S')$. Thus every $\alpha'\in R$ has a good apex for the inclusion
$$
\Gamma_{+}(S)\subset\Gamma_{+}(S')
$$
of  Newton polyhedra.

Now it suffices to verify that $\alpha$ has a good apex for the inclusion
\begin{equation}
\Gamma_{+}(S)\subset\Gamma_{+}(S')\label{eq:inclusionpolyhedra}
\end{equation}
of Newton polyhedra. Let $\epsilon>0$ and put $R_{\epsilon}:=\{(1+\epsilon)\alpha':\; \alpha'\in R\}$. Then
$$
\Gamma_{+}(S)\subset\Gamma_{+}(S(R_{\epsilon}))\subsetneqq\Gamma_{+}(S(R_{\epsilon})(\alpha))\subset \Gamma_{+}(S').
$$
By Corollary \ref{cor:semicont-newton} we have $\nu(S(R_{\epsilon}))=\nu(S(R_{\epsilon})(\alpha))=\nu(S)$. Observe that
$$\Verd{S(R_{\epsilon})(\alpha)}{S(R_{\epsilon})}=\{\alpha\}.$$ By Lemma \ref{lem:nu-constant-good-apex-}, $\alpha$  has a good apex
$$
\beta\in\Ver{S(R^{\epsilon})}.
$$
Since every non-zero coordinate of every element of $R_\epsilon$ is strictly greater than $1$, we have $\beta\notin R_{\epsilon}$, so that $\beta\in\Ver{S}$. Take $\epsilon$ small enough so that every edge $E_{\alpha}$ of
$\Gamma(S(R_{\epsilon})(\alpha))$ that connects $\alpha$ with a vertex in $\Ver{S}$  is an edge of $\Gamma(S')$. Then
$\beta$ is a good apex of $\alpha$ for the inclusion (\ref{eq:inclusionpolyhedra}), as desired. This completes the proof of the Theorem.\end{proof}

We end this section by recalling a result that relates the Milnor number to the Newton number.\\

If the  formal power series $g$ is not convenient, we can define the Newton number $\nu(g)$ of $g$ ($\nu(g)$ could be $\infty$) in the following way.  Let $\mathcal{E'}\subset \mathcal{E}$  such that there does not exist $m\in \ZZ_{>0}$,  such that  $me\in \Ver{g}$. We define the Newton number of $g$ as
$$
\nu(g):=\sup_{m\in \ZZ_{>0}}\nu(\supp{g}\cup \mathcal{E'}_m),
$$
where $\mathcal{E'}_m:=\{me:\;e\in \mathcal{E'}\}$.
\begin{theorem}[See \cite{Kou76}] \label{the:mu>nu}Let  $h\in \mathcal{O}_{n+1}^x$. Then  $\mu( h ) \geq  \nu( h )$. Moreover,
$\mu( h ) = \nu( h )$  if $h$ is non-degenerate.
\end{theorem}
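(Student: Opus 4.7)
The plan is to follow Kouchnirenko's original strategy, treating the two assertions in turn: first establish the equality $\mu(h)=\nu(h)$ when $h$ is non-degenerate, and then deduce the general inequality $\mu(h)\ge \nu(h)$ via upper semicontinuity of the Milnor number combined with a generic non-degenerate perturbation.

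First I would reduce to the case of convenient $h$. If $h$ is not convenient, set
\[
h_N \ :=\ h + \sum_{e\in \mathcal{E}'} x^{Ne}
\]
for large $N$, where $\mathcal{E}'\subset \mathcal{E}$ collects those basis vectors corresponding to coordinate axes that $\Gamma_{+}(h)$ fails to meet. The series $h_N$ is convenient, remains non-degenerate whenever $h$ was, and an inspection of $\Gamma_{+}(h_N)$ shows that $\nu(h_N)$ stabilises to the value $\nu(h)=\sup_N \nu(\supp{h}\cup \mathcal{E}'_N)$ defined in the excerpt. For the Milnor number, either $\mu(h)=\infty$ (in which case the inequality is trivial), or $V(h)$ has an isolated singularity and $\mu(h_N)=\mu(h)$ for $N$ sufficiently large, thanks to finite determinacy modulo $J(h)$.

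For the equality in the convenient non-degenerate case, I would build an embedded toric resolution $\pi:X_{\Sigma}\to \CC^{n+1}$ arising from a regular simplicial subdivision $\Sigma$ of $\RR_{\ge 0}^{n+1}$ refining the dual fan of $\Gamma_{+}(h)$. Non-degeneracy guarantees that $\pi^{*}\{h=0\}$ is a simple normal-crossings divisor and that the strict transform is smooth and meets each exceptional component transversally inside the torus orbits attached to compact faces of $\Gamma_{+}(h)$. One then computes $\mu(h)$ from this resolution, for example through the A'Campo--Varchenko formula for the Euler characteristic of the Milnor fibre: the contribution of each compact face $\gamma$ is expressed as a normalised volume of $\gamma$ together with its coordinate subfaces. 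A careful inclusion--exclusion over the face lattice of $\Gamma_{+}(h)$ then telescopes into the alternating sum
\[
n!\,V_n - (n-1)!\,V_{n-1} + \cdots + (-1)^{n}V_{0} \ = \ \nu(h).
\]
This combinatorial telescoping is, in my view, the main obstacle: the formulae from the resolution are naturally indexed by cones of $\Sigma$, while $\nu(h)$ is indexed by coordinate subspaces, and matching the two bookkeepings requires the kind of volume identities developed for Proposition \ref{pro:proj-I} and Proposition \ref{pro:p-posit}.

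Finally, for the inequality in the possibly degenerate case, I would deform $h$ inside the finite-dimensional $\CC$-vector space of formal series supported on $\supp{h}\cup(\Gamma_{+}(h)\cap \ZZ_{\ge 0}^{n+1})$. A generic element $h_t$ of this space has precisely the same Newton polyhedron (hence the same Newton number $\nu(h_t)=\nu(h)$) and is non-degenerate with respect to its Newton boundary. Upper semicontinuity of the Milnor number in flat families of isolated hypersurface singularities yields $\mu(h)\ge \mu(h_t)$, and the equality case established above gives $\mu(h_t)=\nu(h_t)=\nu(h)$. Combining these inequalities completes the proof.
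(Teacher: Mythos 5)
The paper does not prove this statement: it is recalled from Kouchnirenko \cite{Kou76} and used as a black box, so there is no internal argument to compare yours with. Your outline is the classical route (reduction to the convenient case, equality for convenient non-degenerate germs via a toric embedded resolution together with the Varchenko/A'Campo computation of the Euler characteristic of the Milnor fibre, then genericity of non-degeneracy plus upper semicontinuity of $\mu$ for the inequality), and that route is known to work.

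As written, however, the proposal defers exactly the substantive points. First, the identification of the resolution data with the alternating sum of normalized volumes defining $\nu(h)$ is the core of Kouchnirenko's theorem; labelling it ``the main obstacle'' does not discharge it. Second, the claim that $h_N=h+\sum_{e\in\mathcal{E}'}x^{Ne}$ ``remains non-degenerate whenever $h$ was'' is not automatic: a new compact face through the vertex $Ne_i$ has face polynomial of the form $h_{\gamma'}+x_i^{N}$, and non-degeneracy of $h$ only forbids common torus zeros of \emph{all} the partials of $h_{\gamma'}$, not of the subsystem omitting $\partial_{x_i}h_{\gamma'}$; one needs $N$ large and suitably chosen (or a generic coefficient on $x_i^{N}$) together with an argument — this is a known delicate step in the reduction to the convenient case. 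Third, the deformation space you describe is not finite-dimensional, since $\Gamma_{+}(h)\cap\ZZ_{\geq 0}^{n+1}$ is infinite; you should perturb only along lattice points of the Newton boundary $\Gamma(h)$, and you must invoke (or prove) that non-degeneracy is generic among series with a fixed Newton polyhedron before applying semicontinuity. Finally, the equality assertion also covers the case $\mu(h)=\infty$, where one must check that $\nu(h)=\infty$ as well; your reduction, which assumes finite determinacy, does not address this. None of these is a wrong turn, but together they constitute the actual content of the theorem, so what you have is a correct plan rather than a proof.
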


\begin{remark}
\label{rem:isol}
 Let $h\in \mathcal{O}_{n+1}^x$ be non-degenerate and convenient.  Then $\mu(h)<\infty$, which implies that $h$ has, at most, an isolated singularity in the origin $o$.
\end{remark}

\begin{example}
Consider the following families of non-degenerate deformations:
$$
F^{\lambda}(x,y,z,s):= x^{5\lambda}+y^{7\lambda}z+z^{15}+y^{8\lambda}+sx^{\lambda}y^{6\lambda},\; \lambda\geq 1.
$$
Observe that $F^1$  is a $\mu$-constant deformation of Brian\c{c}on-Speder (convenient version), see \cite{BrSp75}.  By virtue of Theorem \ref{the:mu>nu} and Proposition \ref{pro:homot}, for each $\lambda\geq 1$ the deformation $F^{\lambda}$  is $\mu$-constant.
\end{example}

\section{Characterization of Newton non-degenerate $\mu$-constant deformations}
\label{sec:Main}
First, let us recall some information regarding the Newton fan and toric varieties.  Given $S\subset \ZZ_{\geq 0}^{n+1}\setminus \{o\}$, consider the support function
\begin{center}
 $\suppmaps{S}:\Delta\rightarrow \RR;\;\alpha \mapsto \suppmaps{S}(\alpha):=\inf \{\langle \alpha,p\rangle\mid p\in\Gamma_{+}(S)\},$
\end{center}
where $\Delta:=\RR^{n+1}_{\geq 0}$ is the standard cone and $\langle \cdot ,\cdot\rangle$ is the standard scalar product. Let  $1\leq i\leq n$ and let $F$ be an $i$-dimensional face of the Newton polyhedron $\Gamma_{+}(S)$.  The set
$\sigma_F := \{\alpha \in \Delta:\; \langle \alpha, p\rangle  =\suppmaps{S}(\alpha),\; \forall p \in F \}$ is a cone, and
$\Gamma^{\star}(S):=\{\sigma_F:\; F\; \mbox{is a face of}\; \Gamma_{+}(S)\}$ is a subdivision of the fan $\Delta$ (by abuse of notation we will denote by $\Delta$ the fan induced by the standard cone $\Delta$). The fan $\Gamma^{\star}(S)$ is called the Newton fan  of $S$. Given a formal power series $g$, we define  $\Gamma^{\star}(g):= \Gamma^{\star}(\supp{g}).$

Let $\Delta'\precneq\Delta$ be a strict face of the standard cone $\Delta$ and $(\Delta')^{\circ}$ its interior relative to $\Delta'$. Observe that if there exists  $\alpha\in (\Delta')^{\circ}$  such that $\suppmaps{\alpha}=0$ then
$\Delta'$ is a cone of the fan $\Gamma^{\star}(S)$.  We will say that $\Sigma$ is an {\it admissible subdivision} of
$\Gamma^{\star}(S)$ if $\Sigma$ is a subdivision that preserves the above property, which is to say that if there exists
$\alpha\in(\Delta')^{\circ}$  such that $\suppmaps{S}(\alpha)=0$, then $\Delta'\in \Sigma$.  In the case that the closed discrete set  $S$ is convenient, an admissible subdivision of $\Gamma^{\star}(S)$  is a fan where there are no subdivisions of
strict faces of $\Delta$.

Given a fan  $\Sigma$, we denote by $X_{\Sigma}$ the toric variety associated to it. Given $\sigma\in\Sigma$, we denote by $X_{\sigma}$ the open affine of $X_{\Sigma}$ associated to the cone $\sigma$.  Let $\Sigma'$ be a subdivision of
$\Sigma$. It is known that there exists a proper, birational and equivariant morphism $\pi:X_{\Sigma'}\rightarrow X_{\Sigma}$, induced by the subdivision. Given $\sigma'\in \Sigma'$, we write $\pi_{\sigma'}:=\pi|_{X_{\sigma'}}$.

Now we will use the notation from Section \ref{subsec:mu-def}. Let $V$ be a hypersurface  of $\CC^{n+1}_o$ having a unique isolated singularity at the point $o$.  Let us assume that $V$ is given by the equation $f(x)=0$, where $f\in \CalO{n+1}{x}$ is irreducible, and let $\varrho:W\rightarrow \CC^{m}_o$ be a deformation of $V$ given by $F(x,s)\in \CC\{x_1,...,x_{n+1},s_1,...,s_m\}$.

Without loss of generality we may assume that the germ of analytic function $f$ is convenient. In effect, the Milnor number $\mu(f):=\dim_{\CC} \CalO{n+1}{x}/ J(f)$ is finite. Hence for each $e\in \mathcal{E}$ there exists $m>>0$ such that  $x^{me}$ belongs to the ideal $J(f)$. This implies that the singularities of $f$ and of $f+x^{me}$ have the same analytic type.

Let $s$ be a general point of $\CC^m_o$, and let $\Sigma$  be an admissible subdivision of $\Gamma^{\star}(F_s)$ (not necessarily regular). Denote by $\pi:X_{\Sigma}\rightarrow \CC^{n+1}$ the morphism given by the subdivision of $\Delta$.  Using the morphism $\CC_{o}^{n+1}\rightarrow \CC^{n+1}$  we can consider the base change of $\pi$ and $X_{\Sigma}$ to the base $\CC_{o}^{n+1}$.  By abuse of notation we will denote by $\pi: X_{\Sigma}\rightarrow \CC_0^{n+1}$ the resulting morphism after the base change.

Let us recall the following known fact. Let $V'$ be a hypersurface  of $\CC^{n+1}_o$, $n\geq 1$, having a unique isolated singularity at the point $o$.  Let us assume that $V'$ is given by the equation $g(x)=0$, where $g\in \CalO{n+1}{x}$.  Let us assume that $\Sigma$ is a regular admissible subdivision of a Newton fan $\Gamma^{\star}(g)$.  If $g$ is
non-degenerate with respect to the Newton boundary, then the morphism $\pi:X_{\Sigma}\rightarrow \CC^{n+1}_o$ of toric varieties defines an  embedded  resolution of $V'$ in a neighborhood of $\pi^{-1}(o)$ (see \cite{Var76}, \cite{Oka87} or \cite{Ish07a}). This shows that if
$\Gamma_{+}(F_s)=\Gamma_{+}(f)$, where $s$ is a general point of $\CC^m_o$, and $F$ is a Newton non-degenerate deformation of $f$ (in particular, a $\mu$-constant deformation of $f$ by Theorem \ref{the:mu>nu}), a regular admissible resolution of the Newton fan defines a  simultaneous embedded resolution of $W$. In view of this, for the rest of this section we will assume:

 \begin{enumerate}
\item $F(x,s)\in \CC\{x_1,...,x_{n+1},s_1,...,s_m\}$  is a Newton non-degenerate $\mu$-constant deformation of $f$.
\item $\Gamma_{+}(F_s)\neq \Gamma_{+}(f)$. In particular, $\Verd{F_s}{f}:=\Ver{F_s} \setminus \Ver{f}\neq \emptyset$.
 \end{enumerate}

Let $\varphi:X_{\Sigma}\times \CC^m_o\rightarrow \CC^{n+1}_o\times \CC^{m}_o$ be the morphism induced by $\pi$.  Let $s$ be a general point of $\CC^{m}_{o}$. Given  $\alpha\in \Ver{F_s}$ we denote by $\sigma_{\alpha}$ the
$(n+1)$-dimensional cone of $\Gamma^{\star}(F_s)$ generated by all the non-negative normal vectors to faces of
$\Gamma_{+}(F_s)$ which contain $\alpha$. Denote by $\widetilde{W}^{t}$  the total transform of $W$ under $\varphi$.

 \begin{proposition}
 \label{prop:mu-ND=ESR}
  Let $s$ be a general point of $\CC^m_o$, and assume that

 $$
 \nu(F_{s})=\nu(f).
 $$
 There exists an admissible subdivision, $\Sigma$, of $\Gamma^{\star}(F_s)$
  having the following properties.

  \begin{enumerate}
   \item For each $\alpha\in \Verd{F_s}{f}$, the fan $\Sigma$ defines a subdivision, $\{\sigma_{\alpha}^1,....,\sigma_{\alpha}^r\}$, regular to $\sigma_{\alpha}$.
   \item  For each  $j\in\{1,...,r\}$, $\widetilde{W}^t\cap (X_{\sigma_{\alpha}^j}\times \CC^m_o)$ is a  normal crossings  divisor  relative to $\CC^m_o$.
  \end{enumerate}

 \end{proposition}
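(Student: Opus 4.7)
The plan is to use Theorem~\ref{the:car-nu-constant} to describe each cone $\sigma_\alpha$ explicitly and then verify the normal crossings condition in each chart via the implicit function theorem.

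Since $F$ is Newton non-degenerate and $\mu$-constant, Theorem~\ref{the:mu>nu} gives $\nu(f) = \mu(f) = \mu(F_s) = \nu(F_s)$, so Theorem~\ref{the:car-nu-constant} applies to the inclusion $\Gamma_{+}(f) \subsetneqq \Gamma_{+}(F_s)$: every $\alpha \in \Verd{F_s}{f}$ has a good apex. Note that such $\alpha$ must lie outside $\Gamma_{+}(f)$, since any point of $\Gamma_{+}(f)$ that is extreme in $\Gamma_{+}(F_s) \supseteq \Gamma_{+}(f)$ is also extreme in $\Gamma_{+}(f)$ and hence belongs to $\Ver{f}$. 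Writing $\alpha \in \RR^I_{>0}$, the good-apex data provides $i \in I^c$ and $\beta \in \Ver{f} \cap \RR^{I\cup\{i\}}$ with $\beta_i = 1$ and $\beta_j = 0$ for $j \in I^c \setminus \{i\}$, such that locally at $\alpha$ the polyhedron $P = \overline{\Gamma_{+}(F_s) \setminus \Gamma_{+}(f)}$ is a pyramid with base in $\{x_i = 0\}$ and apex $\beta$.

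The rays of $\sigma_\alpha$ correspond to the facets of $\Gamma_{+}(F_s)$ containing $\alpha$: the base facet of $P$ (inward normal $e_i$) and the lateral facets of $P$ (each containing the edge $E_\alpha = [\alpha,\beta]$, hence with normal annihilating $\beta - \alpha$). Therefore every ray of $\sigma_\alpha$ other than $e_i$ lies in the facet $\tau := \{v \in \sigma_\alpha : \langle v, \beta - \alpha \rangle = 0\}$, and $\sigma_\alpha = \RR_{\geq 0} e_i + \tau$. Setting $L := \{v \in \ZZ^{n+1} : \langle v, \beta - \alpha \rangle = 0\}$ and using $\langle e_i, \beta - \alpha \rangle = 1$, we obtain $\ZZ^{n+1} = L \oplus \ZZ e_i$. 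I would choose any regular simplicial subdivision of the $n$-dimensional cone $\tau$ in the lattice $L$; joining each piece $\tau^j$ with $\RR_{\geq 0} e_i$ yields regular simplicial cones $\sigma_\alpha^j$ that cover $\sigma_\alpha$. These local constructions glue, with standard care at the facets shared with adjacent maximal cones of $\Gamma^{\star}(F_s)$, into a global admissible regular subdivision $\Sigma$.

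To verify condition~(2), fix a chart $\sigma_\alpha^j$ with generators $v_1, \ldots, v_n \in \tau^j$ and $v_{n+1} = e_i$, and coordinates $y_1, \ldots, y_{n+1}$. Then $F \circ \varphi = y^{\alpha'}\tilde F(y, s)$ with $\alpha'_{n+1} = \alpha_i = 0$. Because $\langle v_k, \beta - \alpha\rangle = 0$ for $k \le n$ and $\langle v_{n+1}, \beta - \alpha\rangle = 1$, the monomial $x^\beta$ contributes exactly $c_\beta\, y_{n+1}$ to $\tilde F$ with $c_\beta \neq 0$ (coefficient of $x^\beta$ in $f$), while $x^\alpha$ contributes $c_\alpha(s)$, which vanishes at $s = 0$ but is generically nonzero. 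Hence
\[
\tilde F(y, s) \;=\; c_\alpha(s) + c_\beta\, y_{n+1} + \text{(higher-order terms in } y\text{)},
\]
so $\partial_{y_{n+1}}\tilde F|_{(0, 0)} = c_\beta \neq 0$. The implicit function theorem produces a factorization $\tilde F = u(y, s)\bigl(y_{n+1} - \psi(y_1, \ldots, y_n, s)\bigr)$ with $u$ a unit and $\psi(0, 0) = 0$. The map $\phi(y, s) := (y_1, \ldots, y_n,\, y_{n+1} - \psi(y, s),\, s)$ is then a biholomorphism over $S$ that transforms the ideal of $\widetilde W^t$ into $\phi^{\star}(y_1^{\alpha'_1}\cdots y_n^{\alpha'_n} y_{n+1}'^{\,1})$, matching Definition~\ref{def:NCDR} with $a_{n+1} = 1$ and the strict transform cut out by $\phi^{\star}(y_{n+1}')$. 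The main obstacle is the global consistency of the gluing, together with the fact that this implicit function argument must be carried out not only at the origin of each chart but uniformly at every point of $\varphi^{-1}(o, o)$; this uniformity rests on the Newton non-degeneracy of $F_{s'}$ for all small $s'$ combined with the factorization structure above.
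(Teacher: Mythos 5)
Your treatment of condition (1) is essentially the paper's argument in a slightly cleaner form: the paper also reduces to cones $\cone{q_1,\dots,q_n,e_i}$ with $q_1,\dots,q_n$ lying in the hyperplane $\langle\,\cdot\,,\beta-\alpha\rangle=0$, and proves regularity by a gcd-of-minors computation that is exactly your observation that $\langle e_i,\beta-\alpha\rangle=1$ splits off $\ZZ e_i$ from $\ZZ^{n+1}$. (Both you and the paper use, essentially without proof, that every facet of $\Gamma_{+}(F_s)$ through $\alpha$ other than the one with normal $e_i$ contains the edge $E_\alpha$, i.e.\ $\sigma_\alpha=\cone{e_i}+\sigma_{E_\alpha}$; and the gluing/admissibility bookkeeping you defer is handled in the paper by first fixing a compatible simplicial subdivision of the skeleton. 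These are minor points.)

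The genuine gap is in condition (2). Definition \ref{def:NCDR} must be verified at \emph{every} point of $\varphi^{-1}(o,o)$ in the chart $X_{\sigma_\alpha^j}\times\CC^m_o$, and your implicit-function-theorem argument only works at the chart origin, where the expansion of $\overline{F}$ reduces to $c_\alpha(s)+c_\beta y_{n+1}+\cdots$. Your closing appeal to ``Newton non-degeneracy of $F_{s'}$ for all small $s'$'' cannot close this: the fan $\Sigma$ refines $\Gamma^{\star}(F_s)$ for the \emph{generic} $s$, and since $\Gamma_{+}(f)\subsetneqq\Gamma_{+}(F_s)$ the fan $\Gamma^{\star}(F_s)$ is not a refinement of $\Gamma^{\star}(f)$; hence non-degeneracy of the special fibre $f=F_0$ with respect to its own Newton boundary does not directly make the total transform of $\{f=0\}$ a normal crossings divisor in these charts, and for $s'\neq 0$ non-degeneracy says nothing about the fibre $s'=0$ where the resolution could fail. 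This is precisely where most of the paper's proof is spent: assuming a singular point $r$ of $\overline{F}(y,o)$ with some $r_j=0$, it writes $\overline{F}=c_0(s)+\overline{H}(\overline{y},s)+\overline{K}(y_{n+1},s)+y_n\overline{G}(y,s)$ using the good apex, disposes of the case $E_\alpha\cap\Ver{f}=\{\beta\}$ (where $\overline{K}$ is linear, so $r$ cannot exist), and in the case $|E_\alpha\cap\Ver{f}|>1$ (which forces $r_{n+1}\neq 0$) introduces the auxiliary deformation $F'$ supported on $\Verd{F_s}{f}\cap\RR_{n+1}$, observes that the cone $\sigma_{\beta'}$ at the opposite endpoint $\beta'$ of $E_\alpha$ already belongs to $\Gamma^{\star}(f)$, and uses non-degeneracy of $f$ in a resolution chart at $\beta'$ to exclude $r$. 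Your proposal has no substitute for this analysis, so as written it does not establish property (2) of the Proposition.
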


\begin{proof}
Let us recall that  $\mathcal{E}:=\{e_1,e_2,...,e_{n+1}\}\subset \ZZ_{\geq 0}^{n+1}$ is the standard basis of $\RR^{n+1}$.  First we will construct a simplicial subdivision of $\Gamma^{\star}(F_s)$.  Let $\Gamma^{\star}(F_s)(j)$ be the set of all the
$j$-dimensional cones of $\Gamma^{\star}(F_s)$.  Let us consider a compatible simplicial subdivision, $\Sigma S$, of $\bigcup\limits_{j=1}^{n}\Gamma^{\star}(F_s)(j)$, such that if $\sigma'$ is a simplicial $j$-dimensional cone of $\Gamma^{\star}(F_s)(j)$, $1\leq j\leq n$, then $\sigma'\in \Sigma S$ and $\Sigma S(1)=\Gamma^{\star}(F_s)(1)$, where $\Sigma S(1)$ is the set  of all the $1$-dimensional cones of $ \Sigma S$.

Let us consider the case

\begin{center} $\alpha\in \Verd{F_s}{f}$. \end{center} By  Theorem \ref{the:car-nu-constant}, $\alpha$ has a good apex. Then there exists
$I\subsetneqq\{1,...,n+1\}$  such that $\alpha\in \RR^I_{>0}$  and $i\in I^c$ such that there  exists a single edge  $E_{\alpha}\ni \alpha$, of $\Gamma(F_s)$ not contained in $\RR_{\{i\}}$.  Let
$ \beta=(\beta_1,...,\beta_{n+1})\in \Ver{F_s}\cap E_{\alpha} $
be the good apex, which is to say  $\beta_i=\delta_{ij}$, $j\in I^c$.

Observe that   $e_{i}\in \mathcal{E}$, is an extremal vector of $\sigma_{\alpha}$.   Let us consider the following simplicial subdivision of $\sigma_{\alpha}$:
$$
\Sigma^s(\sigma_{\alpha}):=\{\cone{e_{i},\tau}:\;\tau \in \Sigma S\;\mbox{and}\; \tau \subset \sigma_{\alpha}\}\cup\{\tau \in  \Sigma S:\; \tau\subset \sigma_{\alpha}\},
$$
where cone  $\cone{\{\cdot\}}$ is the cone generated by $\{\cdot\}$.  Now let us consider the case

\begin{center}
 $
\alpha\in \Ver{F_s}\setminus \Verd{F_s}{f}=\Ver{F_s}\cap\Ver{f}.
$
\end{center}

let $\Sigma^s(\sigma_{\alpha})$ be an arbitrary simplicial subdivision of $\sigma_{\alpha}$  that is compatible with $\Sigma S$.  Then

\begin{center}
 $\Sigma^s:=\bigcup\limits_{\alpha\in \Ver{F_s}}\Sigma^s(\sigma_{\alpha})$
\end{center}
is a simplicial subdivision of
$\Gamma^{\star}(F_s)$. As $F_s$ is convenient, the faces of $\sigma_{\alpha}$, $\alpha\in \Ver{F_s}$, contained in a coordinate plane are simplicial cones, then $\Sigma^s$ is an admissible subdivision.  \vspace{0.5cm}

Now we will define a subdivision of $\Sigma^s$ to obtain the sought after fan.  \vspace{0.5cm}

Let $\alpha\in \Verd{F_s}{f}$. By abuse of notation we will denote for  $\sigma_{\alpha}$ a cone in $ \Sigma^s(\sigma_{\alpha})(n+1)$.  Without loss of generality we can suppose $i=n+1$, in this manner we have that $\sigma_{\alpha}=\cone{e_{n+1},\tau}$ with $\tau\in \Sigma S$.  We denote $H_{0}=\RR_{\{n+1\}}\cap \Gamma_{+}(F_s)$ and $H_1$,....,$H_n$  the $n$-dimensional faces of $\Gamma_{+}(F_s)$ that define $\sigma_{\alpha}$, then $\bigcap\limits_{j=0}^{n}H_j=\{\alpha\}$. Then $E_{\alpha}:=\bigcap\limits_{j=1}^{n}H_j$

Let $p_1, ...,p_n$ be non-negative normal vectors to the faces $H_1,...,H_n$.  Then $$\sigma_{\sigma}:=\cone{p_1,...,p_n,e_{n+1}}.$$  Now we will construct a regular subdivision of $\sigma_{\alpha}$.  Let us consider the cone $$\tau:=\cone{p_1,...,p_n}\subset \sigma_{\alpha},$$ and a regular subdivision $\rs{\tau}$ of $\tau$ that does not subdivide regular faces of $\tau$.  Then $\rs{\tau}$ does not subdivide faces $\Delta'\precneq \Delta$.  Let $\tau'\in \rs{\tau}$, then there exists  $q_1,...,q_n\in \cone{p_1,...,p_n}$ such that $\tau':=\cone{q_1,...,q_n}$.  Observe that the cones

\begin{center}
 $ (\star) \hspace{0.5cm}  \sigma'_{\alpha}:=\cone{q_1,...,q_n,\e_{n+1}}$
\end{center} \hspace{-0.55cm} define a subdivision of the cone $\sigma_{\alpha}$ that can be extended to a subdivision $\Sigma$ of  $\Sigma^s$ that does not subdivide faces $\Delta'\precneq \Delta$, which implies that $\Sigma$ is admissible.

Now we will prove that $\sigma'_{\alpha}:=\cone{q_1,...,q_n,e_{n+1}}$ is regular.  Looking at $q_j$ as column vectors, and consider the matrix of the size $(n+1)\times n$:

\begin{center}
  $
A:=\left (\begin{array}{ccc} q_1& \cdots &q_{n}\\
                \end{array} \right ) =\left (\begin{array}{ccc} q_{1\;1}&\cdots & q_{n\;1}\\ \vdots &\vdots &\\ q_{1\;n+1}& \cdots & q_{n\;n+1} \\
                \end{array} \right )$
\end{center}

For each $j\in \{1,...,n+1\}$ let $A_j$ be the matrix of the size $n\times n$ obtained by deleting the row $j$ of the matrix $A$.  As $\tau':=\cone{q_1,...,q_n}$ is regular, we have that the greatest common divisor, $\gcd(d_1,...,d_{n+1})$, where
$$
d_j=|\det(A_j)|,
$$
is  equal to $1$. Let us suppose that the cone
$$
\sigma'_{\alpha}:=\cone{q_1,...,q_n,e_{n+1}}
$$
is not regular, then $|\det(q_1,...,q_n,e_{n+1})|=d_{n+1}\geq 2$. For each $H_j$, $1\leq j \leq n$ we have that $\alpha,\beta\in H_j$, then $\langle \alpha, p_j \rangle =\langle \beta, p_j\rangle$ for all $1\leq j \leq n$, which implies that $\langle \alpha, q_j \rangle =\langle \beta, q_j\rangle$ for all  $1\leq j \leq n$. From this we obtain that
$$q_{j\;n+1}=\sum_{k=1}^{n} (\alpha_k-\beta_k)q_{j k}$$ for all $1\leq i \leq n$ (remember that $\beta$ is the good apex of $\alpha$).  Then $d_{n+1}$ divides to $d_j$ for all $1\leq j\leq n$, which contradicts the fact that $\gcd(d_1,...,d_{n+1})=1$.  This implies that $\sigma'_{\alpha}$ is regular.

Observe that there exist coordinates $y_1,...,y_{n+1}$ of $X_{\sigma'_{\alpha}}\cong \CC^{n+1}$ (before the base change) such that the morphism
$$
\pi_{\sigma'_{\alpha}}(y):=\pi_{\sigma'_{\alpha}}(y_1,...,y_{n+1})=(x_1,...,x_{n+1})
$$
is defined by:
\begin{center}
   $x_{n+1}:= y_1^{q_{1\;n+1}}\cdots y_{n}^{q_{n\;n+1}}y_{n+1}$  and
   $x_i:=y_1^{q_{1i}}\cdots y_{n}^{q_{n i}}$, $1\leq i\leq n$.

   \end{center}

From this we obtain
$$
F(\pi_{\sigma'_{\alpha}}(y),s)=y_1^{m_1}\cdots y_{n}^{m_{n}}\overline{F}(y,s),\;\; m_i=\langle q_i,\alpha\rangle,\; 1\leq i\leq n.
$$

Let us assume that $r=(r_1,...,r_{n+1})$  is a singular point of $\overline{F}(y,o)$. Then there exists $1\leq j \leq n$ such that $r_j=0$.  Without loss of generality we can suppose that $r_n=0$.  We know that for each $\beta'\in  E_{\alpha}\cap\Ver{f}$ we have that $\langle \alpha, q_i \rangle =\langle \beta', q_i\rangle$, for all $1\leq i \leq n$, and as $\alpha$ has a good apex, we obtain

\begin{center}
$\overline{F}(y,s)=c_0(s)+\overline{H}(\overline{y},s)+\overline{K}(y_{n+1},s)+ y_{n}\overline{G}(y,s),$
\end{center}
where $\overline{y}=(y_1,...,y_{n-1})$, $c_0(o)=0$, and
\begin{center}
 $\overline{K}(y_{n+1},s)=c_1(s)y_{n+1}+\cdots +c_{l}(s)y_{n+1}^l$, $c_1(o)\neq 0$.
\end{center}

If $E_{\alpha}\cap \Ver{f}=\{\beta\}$, then $\overline{K}(y_{n+1},s)=c_1(s)y_{n+1}$. This shows that $r$ cannot be a singular point of $\overline{F}$.  If $|E_{\alpha}\cap \Ver{f}|>1$, then the singular point $r=(r_1,...,r_{n+1})$ satisfies
$$
\dfrac{d\overline{K}(r_{n+1},0)}{dy_{n+1}}=0.
$$
This implies that $r_{n+1}\neq 0$. We will prove that this is contradiction.

Let $W=\Verd{F_s}{f}\cap \RR_{n+1}$ and we define

  \begin{center}
    $F'(x,s)=f(x)+\sum_{\gamma\in W}d_{\gamma}(s)x^{\gamma}$, $d_{\gamma}(o)=0$ for all $\gamma\in W$.

  \end{center}
We may assume that $F'$ is a non-degenerate deformation of $f$.  As
$$
\Gamma_{+}(f)\subset \Gamma_{+}(F'_s)\subset \Gamma_{+}(F_s),
$$
we have $\nu(F'_s)=\nu(f)$  (see corollary \ref{cor:semicont-newton}). By definition of $F'$, the point $\alpha$ belongs to
$\Verd{F'_s}{f}=W$.  We note $\sigma_{\alpha}$ the cone of $\Gamma^{\star}(F'_s)$ associated to $\alpha$.  By construction the cone $\sigma_{\alpha}$  of  $\Gamma^{\star}(F'_s)$ is the cone $\sigma_{\alpha}$ of $\Gamma^{\star}(F_s)$  previously defined.  Using the same regular subdivision of $\sigma_{\alpha}$ we can define a regular admissible subdivision  $\Sigma'$ of the fan $\Gamma^{\star}(F'_s)$.

 Let $\sigma'_{\alpha}$ be  one of  the two regular cones of the subdivision of  $\sigma_{\alpha}$ (see $(\star)$).  As we previously obtained
$$
F'(\pi_{\sigma'_{\alpha}}(y),s)=y_1^{m_1}\cdots y_{n}^{m_{n}}\overline{F'}(y,s),\;\; m_i=\langle q_i,\alpha\rangle,\; 1\leq i\leq n.
$$

	Then $r$ is a singular point of $\overline{F}(y,o)$  if and only if $r$ is a singular point of $\overline{F'}(y,o)$ (in fact $\overline{F}(y,o)=\overline{F'}(y,o)$).  We recall that $|E_{\alpha}\cap \Ver{f}|>1$, and that $E_{\alpha}$ is the only edge of $\Gamma_{+}(F_s)$ not contained in $\RR_{i}$, which contains $\alpha$ and its good apex.  Observe that $E_{\alpha}$ also is the unique edge  $\Gamma_{+}(F'_s)$ which satisfies the previous properties.
	Let  $\beta'\neq \alpha$ an end point of $\E_{\alpha}$, and $\sigma_{\beta'}\in \Gamma^{\star}(F_s)$ the cone associated to $\beta'$.  As $|E_{\alpha}\cap \Ver{f}|>1$, and  $\Verd{F'_s}{f}\subset \RR_{n+1}$, we have that the cone $\sigma_{\beta'}$ belongs to $\Gamma^{\star}(f)$.  Then the regular subdivision  $\sigma^1_{\beta'}$, ...,$\sigma^t_{\beta'}$  of $\sigma_{\beta'}$ defined by the regular admissible subdivision $\Sigma'$ can be extended to regular admissible subdivision $\Sigma''$ of  $\Gamma_{+}(f)$.  By construction there exists  $1\leq j\leq t $ such that $r\in X_{\sigma^{j}_{\beta'}}\cong \CC^{n+1}$.   But $f$ is non degenerate, which  implies that $\widetilde{V}^{s}\cap X_{\sigma^{i}_{\beta'}}$ is smooth, from where we obtain the sought after contradiction.
This implies that $F(\pi_{\sigma'}(y),s)$, which is a  normal crossings divisor relative to $\CC^m_o$   around $\pi^{-1}_{\sigma'_{\alpha}}(o)\times \CC^m_o$.
\end{proof}

 The following theorem is the main result of this article.  Let $s$ be a general point of $\CC^m_{o}$.  We will construct a regular admissible subdivision, $\Sigma$, of  $\Gamma^{\star}(F_s)$  in the manner that $\rho:X_{\Sigma}\times (\CC^m,o)\rightarrow \CC^{n+1}_o\times \CC^m_o$ is the sought after simultaneous embedded resolution.  Observe that for the result commented upon previously, $\pi:X_{\Sigma}\rightarrow \CC^{n+1}_o$  defines an embedded resolution of $W_s$.\\

\begin{theorem}
\label{the:mu-ND=ESR}  Assume that $W$ is a Newton non-degenerate deformation.  The deformation $W$ is $\mu$-constant if and only if $W$ admits a simultaneous embedded resolution.
\end{theorem}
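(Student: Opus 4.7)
The forward implication is Proposition~\ref{pro:SER->mu-const}(2), so I focus on the converse. Assume $W$ is Newton non-degenerate and $\mu$-constant, and let $s$ denote the generic point of $\CC^m_o$. Since both $F_s$ and $f$ are Newton non-degenerate and (after the standard modification explained in the paragraph before the statement) convenient, Theorem~\ref{the:mu>nu} gives $\nu(F_s)=\mu(F_s)=\mu(f)=\nu(f)$. This reduces the problem to a purely combinatorial setting about pairs of Newton polyhedra with equal Newton numbers, which is exactly the framework developed in Section~\ref{sec:newton-poly}.

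I would next invoke Proposition~\ref{prop:mu-ND=ESR} to produce an admissible subdivision $\Sigma_0$ of $\Gamma^{\star}(F_s)$ which is regular on each cone $\sigma_\alpha$ with $\alpha\in\Verd{F_s}{f}$, and for which the total transform of $W$ is a normal crossings divisor relative to $S$ above these cones. A standard toric regularization then produces a regular admissible refinement $\Sigma$ of $\Sigma_0$ that does not touch the cones already made regular over the new vertices. Let $\pi:X_\Sigma\to\CC^{n+1}_o$ denote the resulting toric morphism after base change to the germ, and $\varphi:X_\Sigma\times S\to\CC^{n+1}_o\times S$ its product with $\mathrm{id}_S$.

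Two things remain to be verified. First, for every $s'$ in a sufficiently small neighborhood $\Omega$ of $o$, the restriction of $\pi$ gives an embedded resolution of $V_{s'}$: since $\Gamma_+(F_{s'})\subseteq\Gamma_+(F_s)$ and $F_{s'}$ is Newton non-degenerate, $\Sigma$ is in particular a regular refinement of $\Gamma^{\star}(F_{s'})$, so the classical Varchenko--Oka theorem (see \cite{Var76, Oka87, Ish07a}) applies and yields the very weak simultaneous resolution. Second, at each point $p\in\varphi^{-1}(o,o)$ lying in a chart $X_{\sigma'}\times S$ with $\sigma'\subset\sigma_\alpha$, the total transform must be a normal crossings divisor relative to $S$. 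When $\alpha\in\Verd{F_s}{f}$, this is exactly the content of Proposition~\ref{prop:mu-ND=ESR}(2). When $\alpha\in\Ver{f}$, the toric computation factors $F(\pi(y),s)=y^{\mathbf{m}}\,\overline{F}(y,s)$ with $\overline{F}(y,o)$ either a unit at $p$ or cutting out a smooth hypersurface transverse to the exceptional divisor by non-degeneracy of $f$; small perturbations in $s$ preserve this picture, providing the local embedded trivialization $\phi$ required by Definition~\ref{def:NCDR} and the flatness of $\widetilde{W}^t\to S$.

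The main obstacle is case~(b) at an old vertex $\alpha\in\Ver{f}$ for which some new vertex $\alpha'\in\Verd{F_s}{f}$ shares a face of $\Gamma_+(F_s)$ with $\alpha$, so that new monomials $x^{\alpha'}$ with $s$-dependent coefficients may appear in $\overline{F}(y,s)$ and could, a priori, spoil the local product structure. The good-apex structure delivered by Theorem~\ref{the:car-nu-constant}---that is, the fact that the unique edge $E_{\alpha'}$ of $\Gamma(F_s)$ not contained in some coordinate hyperplane $\RR_{\{i\}}$ connects $\alpha'$ to a vertex $\beta$ of $f$ with $\beta_j=\delta_{ij}$ for $j\in I^c$---controls precisely how these corrections enter the chart: they appear only as higher-order perturbations compatible with the toric coordinates chosen at $\alpha$, so after an explicit computation (of the same flavor as the one carried out in the proof of Proposition~\ref{prop:mu-ND=ESR}) the biholomorphic trivialization $\phi$ exists, and Definition~\ref{def:NCDR} is verified everywhere on $\varphi^{-1}(o,o)$. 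This completes the construction of the simultaneous embedded resolution.
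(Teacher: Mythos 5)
Your skeleton matches the paper's (forward direction via Proposition \ref{pro:SER->mu-const}, the equality $\nu(F_s)=\nu(f)$ via Theorem \ref{the:mu>nu}, Proposition \ref{prop:mu-ND=ESR} over the new vertices, then a regularization that leaves the already-regular cones untouched), but the two steps where you go beyond Proposition \ref{prop:mu-ND=ESR} contain genuine gaps. First, the claim that $\Sigma$ is ``in particular a regular refinement of $\Gamma^{\star}(F_{s'})$'' because $\Gamma_{+}(F_{s'})\subseteq\Gamma_{+}(F_s)$ is false: inclusion of Newton polyhedra does not make the Newton fan of the larger one refine that of the smaller one. Already in the Brian\c{c}on--Speder example $f=x^5+y^7z+z^{15}+y^8$, $F=f+sxy^6$, the maximal cone of $\Gamma^{\star}(F_s)$ at the new vertex $\alpha=(1,6,0)$ is generated by $(3,2,1)$, $(2,1,1)$, $(0,0,1)$ and contains interior functionals whose minimum over $\Gamma_{+}(f)$ is attained at $(5,0,0)$ and others where it is attained at $(0,7,1)$; hence this cone lies in no single cone of $\Gamma^{\star}(f)$, and nothing in the construction of $\Sigma$ forces compatibility with $\Gamma^{\star}(F_{s'})$ for special $s'$. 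So you cannot invoke Varchenko--Oka fiberwise this way; the fiberwise statement over the cones at vertices of $\Verd{F_s}{f}$ is exactly the hard content of Proposition \ref{prop:mu-ND=ESR} and admits no such shortcut.

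Second, your treatment of the charts at an old vertex $\alpha\in\Ver{F_s}\setminus\Verd{F_s}{f}$ is not a proof: non-degeneracy of $f$ says nothing about the new $s$-dependent monomials appearing in $\overline{F}(y,s)$, ``small perturbations in $s$ preserve this picture'' is precisely what has to be shown, and the good apex of a new vertex $\alpha'$ plays no role in the chart at $\alpha$ (the ``explicit computation of the same flavor'' is never carried out). The paper's mechanism here is different and you would need to supply it or an equivalent: since $\alpha\in\Ver{f}$, the degree-zero coefficient $c(s')$ of $\overline{F}$ is non-zero for all $s'$ in a small $\Omega$, so $\alpha$ remains a vertex of $\Gamma_{+}(F_{s'})$; because $\Gamma_{+}(F_{s'})\subseteq\Gamma_{+}(F_s)$, the cone $\sigma_{\alpha}$ taken with respect to $F_s$ is contained in the cone $\sigma_{\alpha,s'}$ taken with respect to $F_{s'}$, so each regular chart cone $\sigma^i\subset\sigma_{\alpha}$ extends to an admissible subdivision of $\Gamma^{\star}(F_{s'})$ without subdividing strict faces of $\Delta$; then the Newton non-degeneracy of the fiber $F_{s'}$ \emph{for every} $s'\in\Omega$ (which is what ``Newton non-degenerate deformation'' provides, and which you never use) gives that $F_{s'}(\pi_{\sigma^i}(y))$ is a normal crossings divisor, and from this one deduces the normal crossings property relative to $\CC^m_o$. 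As written, both the very weak simultaneous resolution and the relative normal crossings condition away from the new vertices are asserted rather than proved.
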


\begin{proof} The ``if'' part is given by Proposition \ref{pro:SER->mu-const}. We will prove ``only if''.\\

By Proposition \ref{prop:mu-ND=ESR} there exists an admissible subdivision,  $\Sigma$, of $\Gamma^{\star}(F_s)$  (where $s$ is a general point of $\CC^{m}_{o}$)  such that for each $\alpha\in \Verd{F_s}{f}$, the fan $\Sigma$ defines a subdivision $\sigma_{\alpha}^1,....,\sigma_{\alpha}^r$, regular of $\sigma_{\alpha}$, such that $\widetilde{W}^t\cap X_{\sigma_{\alpha}^i}\times\CC^m_o$ is a  normal crossings divisor relative to $\CC^m_o$  for $i\in\{1,...,r\}$.  Consider the set, $\Sigma(j)$, of all the cones of dimension $j$ of $\Sigma$.  Observe that given a regular admissible subdivision of $\Sigma(j)$, there exists a regular admissible subdivision of $\Sigma(j+1)$ compatible with the given subdivision.  Using recurrence we have that there exists a regular admissible subdivision of $\Sigma$ that does not subdivide its regular cones.  By abuse of notation we will denote for $\Sigma$ the regular admissible subdivision.
To finish the proof we still need to consider $\alpha\in \Ver{F_s}\cap \Ver{f}$.  Let us consider the cone $\sigma\subset \RR^{n+1}_{\geq 0}$ generated by all the non-negative normal vectors to faces of $\Gamma_{+}(F_s)$ which contain a $\alpha$, and let $\sigma^1, ...,\sigma^r$ be the regular subdivision defined by $\Sigma$.  Let us suppose that $p_{1}^i,...,p_{n+1}^i$  are the extremal vectors of  $\sigma^i$.  As $\sigma^i$ is regular, we have that $X_{\sigma^i}\cong \CC^{n+1}$ (before the base change).  Then we can associate the coordinates $y_1,...,y_{n+1}$  to $X_{\sigma^i}$ such that $\pi_{\sigma^i}:=\pi|_{X_{\sigma^i}}$  is defined by   $$\pi_{\sigma^i}(y):=\pi_{\sigma^i}(y_1,...,y_{n+1})=x:=(x_1,...,x_{n+1}),$$ where $x_j:=y_1^{p_{1j}^i}\cdots y_{n+1}^{p_{n+1\;j}^i}$,  $p_{j}^i:=(p^i_{j 1},...,p^i_{j \;n+1})$, $1\leq j\leq n+1$.
Then
$$
F(\pi_{\sigma^i}(y),s)=y_1^{m_1}\cdots y_{n+1}^{m_{n+1}}\overline{F}(y,s),\;\; m_j=\langle p_j^i,\alpha\rangle,\; 1\leq j\leq n+1.
$$
 Let $c(s)$ be the degree zero term of $\overline{F}(y,s)$. As $\alpha\not \in \Verd{F_s}{f}$, there exits a sufficiently small open set $0\in \Omega\subset \CC^{m}$ such that $c(s')\neq 0$ for all $s'\in \Omega$. Moreover, for each
$s'\in\Omega$, we have  $\sigma^i\subset \sigma_{\alpha,s'}$, $1\leq i\leq r$, where $\sigma_{\alpha, s'}$ is the
$(n+1)$-dimensional cone of $\Gamma^{\star}(F_{s'})$ generated by all the non-negative normal vectors to faces of
$\Gamma_{+}(F_{s'})$ which contain $\alpha$. Observe that for each $s'\in \Omega$ we can extend the fan formed by the cones $\sigma^i$, $1\leq i\leq r$, to a subdivision of the cone $\sigma_{\alpha,s'} \in \Gamma^{\star}(F_{s'})$ without subdivisions of strict faces of $\Delta$ (this allows us to return  to the classical case of non-degenerate hypersurface for each $s'\in \Omega$). Then the property of non-degeneracy of $F$ implies that $F_{s'}(\pi_{\sigma^i}(y))$  is a  normal crossings divisor for each $s'\in \Omega$. This implies that $F(\pi_{\sigma^i}(y),s)$   is a  normal crossings divisor relative to $\CC^m_o$ around $\pi^{-1}_{\sigma^i}(o)\times\CC^m_o$.
\end{proof}

\section{The Degenerate Case}
\label{sec:CasDeg}
Let us recall that  $V$ is a hypersurface of $C_{o}^{n+1}$, $n\geq 1$, given by  $f\in \CalO{n+1}{x}$ irreducible such that  $V$ has an isolated singularity at $o$. Let $F\in \CC\{x_1,...,x_{n+1},s_1,....,s_m\}$ be a deformation of $f\in \CC\{x_1,...,x_{n+1}\}$:

\begin{center}
 $F(x,s):=f(x)+\sum_{i=1}^{\infty} h_i(s)g_i(x),$
\end{center}
where  $h_i\in \CalO{m}{s}:=\CAR{s}{m}$, $m\geq 1$, and  $g_i\in \CalO{n+1}{x}$ such that
$
h_i(o)=g_i(o)=0.
$
Consider the relative Jacobian ideal
$$
J_x(F):=\left(\partial_{x_1}F,...,\partial_{x_{n+1}}F\right)\subset \CC\{x_1,...,x_{n+1},s_1,...,s_m\}.
$$
The following theorem gives a valuative criterion for the $\mu$-constancy of a deformation.

\begin{theorem}[See \cite{Gre86},  \cite{LDSa73} and \cite{Tei73}]
\label{the:val-crit} The following are equivalent:

\begin{enumerate}
 \item $F$ is a $\mu$-constant deformation of $f$.
 \item For all $i\in{1,....,m}$ we have that  $\partial_{s_{i}}F\in \overline{J_x(F)}$, where  $\overline{J_x(F)}$ denotes the integral closure of the ideal  $J_x(F)$.
 \item For all analytic curve $\gamma:(\CC,o)\rightarrow (\CC^{n+1}\times \CC^{m},o)$, $\gamma(o)=o$,
 and for all  $i\in\{1,...,m\}$ we have that:

 $\ord_t(\partial_{s_i}F\circ \gamma(t))> \min\{ \ord_t(\partial_{x_j}F\circ \gamma(t))\;|\; 1\leq j\leq n+1\}.$
 \item Same statement as in $(3)$ with ``$>$'' replaced by ``$\geq$''.
\end{enumerate}

\end{theorem}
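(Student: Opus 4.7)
The plan is to establish the chain of implications $(1)\Rightarrow(2)\Leftrightarrow(4)\Leftrightarrow(3)\Rightarrow(1)$. The two equivalences between $(2)$, $(3)$ and $(4)$ are formal consequences of Rees's valuative criterion for integral dependence: for an ideal $I$ in a local analytic $\CC$-algebra $A$ and $h\in A$, $h\in\overline{I}$ if and only if $\ord_t(h\circ\gamma)\geq \min_g \ord_t(g\circ\gamma)$ for every analytic arc $\gamma:(\CC,o)\to (\spec A,o)$, where $g$ runs over a generating set of $I$. Applied to $A=\CC\{x,s\}$, $h=\partial_{s_i}F$ and $I=J_x(F)$, this gives $(2)\Leftrightarrow(4)$ at once. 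Since $(3)\Rightarrow(4)$ is trivial, the remaining point for $(4)\Rightarrow(3)$ is to upgrade $\geq$ to $>$: because $h_i(o)=g_i(o)=0$, the derivative $\partial_{s_i}F$ vanishes along the loci $\{s=0\}$ and $\{x=o\}$, and one checks that $\partial_{s_i}F$ in fact lies in $\mathfrak{m}\cdot\overline{J_x(F)}$ where $\mathfrak{m}$ is the maximal ideal of $\CC\{x,s\}$; applying Rees's criterion to the ideal $\mathfrak{m}\cdot\overline{J_x(F)}$ then produces the strict inequality in $(3)$.

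The substantive content is the equivalence $(1)\Leftrightarrow(2)$. For $(1)\Rightarrow(2)$ I would invoke Teissier's principle of specialization of integral dependence. The $\mu$-constant hypothesis, together with upper semicontinuity of $\mu$ and conservation of total Milnor number, forces the relative critical scheme $C:=\specrel(\CC\{x,s\}/J_x(F))$ to be finite and flat over $\spec\CC\{s\}$ with constant fibre length $\mu(f)$; in particular no critical points of $F_{s}$ escape the origin. Since $f$ has an isolated singularity and $F_{s'}(o)=f(o)+\sum h_i(s')g_i(o)=0$, the unique critical value of $F_{s'}$ near the origin is $0$; differentiating the identity $F(\sigma(s),s)\equiv 0$ along any analytic section $\sigma$ of $C_{red}\to\spec\CC\{s\}$ shows that $\partial_{s_i}F$ vanishes on $C_{red}$. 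Promoting this set-theoretic vanishing to integral dependence on $J_x(F)$ uses the flatness of $C\to\spec\CC\{s\}$: by a Nakayama-type argument on the $\CC\{s\}$-module $\CC\{x,s\}/J_x(F)$, an element of $\CC\{x,s\}$ vanishing on the reduced relative critical locus of a flat family of finite length equal to the generic one must lie in $\overline{J_x(F)}$.

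For the converse $(2)\Rightarrow(1)$, assume $\partial_{s_i}F\in\overline{J_x(F)}$ for all $i$. The valuative criterion $(4)$ prevents critical points from bifurcating: if $F_{s'}$ had a critical point $x(s')$ drifting away from the origin, the curve $\gamma(t)=(x(s(t)),s(t))$ in $\spec\CC\{x,s\}$ would satisfy $\partial_{x_j}F\circ\gamma\equiv 0$ while $\partial_{s_i}F\circ\gamma$ would not vanish to sufficiently high order, contradicting $(4)$. This ensures that the projection $C\to\spec\CC\{s\}$ is quasi-finite, and since $\CC\{x,s\}/J_x(F)$ is then $\CC\{s\}$-flat by miracle flatness combined with the constancy of fibre dimension, one gets $\mu(F_{s'})=\mu(f)$.

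I expect the main obstacle to be the implication $(1)\Rightarrow(2)$: the leap from vanishing-on-$C_{red}$ to integral dependence on $J_x(F)$ requires either Teissier's relative polar curve machinery or the full weight of the principle of specialization of integral dependence, which is precisely the non-trivial content of the results of Greuel, L\^e--Saito, and Teissier cited in the statement. The other steps are either formal valuative manipulations or relatively standard semicontinuity/flatness arguments.
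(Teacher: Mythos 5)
A preliminary remark: the paper does not prove Theorem \ref{the:val-crit} at all --- it quotes it from Greuel, L\^e--Saito and Teissier --- so there is no internal proof to compare with, and your proposal has to stand on its own. It does not: the two steps that carry all the weight are backed by arguments that fail. For $(1)\Rightarrow(2)$, the principle you invoke --- ``an element vanishing on the reduced relative critical locus of a finite flat family of constant fibre length lies in $\overline{J_x(F)}$'' --- is false. Vanishing on the reduced critical locus only places $\partial_{s_i}F$ in the radical $\sqrt{J_x(F)}$, which (under non-splitting) is $(x_1,\dots,x_{n+1})$, and the radical is in general much larger than the integral closure: for $f=x^3+y^3$ one has $\overline{J(f)}=(x,y)^2\subsetneq (x,y)$. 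In fact $\partial_{s_i}F=\sum_j\partial_{s_i}h_j(s)\,g_j(x)$ lies in $(x_1,\dots,x_{n+1})$ for trivial reasons (all $g_j(o)=0$), with no hypothesis on $\mu$, so no ``Nakayama-type'' upgrade of this vanishing can yield integral dependence. A correct use of the principle of specialization of integral dependence needs the fibrewise input $\partial_{s_i}F|_{s=s'}\in\overline{J(F_{s'})}$ for generic $s'$ together with the constancy of $e(J(F_{s'}))=\mu(F_{s'})$; the fibrewise statement is neither automatic nor addressed, and supplying it is essentially the content of the cited theorems --- as your own closing paragraph concedes, which means the proposal ultimately defers to the result it is meant to prove.

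The other shortcuts also break down. For $(4)\Rightarrow(3)$: $\partial_{s_i}F$ does \emph{not} vanish on $\{s=0\}$ in general (the coefficients $\partial_{s_i}h_j(0)$ need not vanish), and what is true, namely $\partial_{s_i}F\in\mathfrak{m}\cap\overline{J_x(F)}$, does not imply $\partial_{s_i}F\in\mathfrak{m}\cdot\overline{J_x(F)}$: intersecting with the maximal ideal is not the same as multiplying by it (compare $x^2\in\mathfrak{m}\cap\overline{(x^2)}$ while $x^2\notin\mathfrak{m}\cdot(x^2)$). The equivalence of the strict and non-strict arc inequalities is not a formal consequence of the Rees/curve criterion; it encodes the L\^e--Saito non-splitting statement and is part of the substance of the theorem. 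Finally, in $(3)/(4)\Rightarrow(1)$ your contradiction does not materialize: along an arc of relative critical points one has $\partial_{x_j}F\circ\gamma\equiv 0$ for all $j$, so the minimum of the orders is infinite and condition $(4)$ merely forces $\partial_{s_i}F\circ\gamma\equiv 0$ (hence $F\circ\gamma\equiv 0$), which is not yet absurd; moreover finiteness and flatness of $C\to\spec\CC\{s\}$ hold automatically (the special fibre is Artinian and miracle flatness applies, without using $(2)$), and they only give constancy of the \emph{total} Milnor number of the fibre, not of $\mu$ at the origin. The missing point in each of these places is the same genuinely non-trivial fact --- non-splitting of the critical locus with constant critical value --- which your outline assumes rather than proves.
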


Next, we state and prove an analogue of Corollary \ref{cor:(b,1)-1} for deformations that do not satisfy the
non-degeneracy assumption. Let $s$ be a general point of $\CC_{o}^m$, $I$ a proper subset of $\{1,\dots n+1\}$  and consider
$$
\Gamma_{+}(f)\subsetneq\Gamma_{+}(F_s)
$$
such that $\Verd{F_s}{f}\cap\RR^I_{>0}\neq\emptyset$.  If $F$ is a $\mu$-constant non-degenerate deformation of $f$, then by virtue of Corollary \ref{cor:(b,1)-1} there exists $i\in I^c$ such that  $\beta_i=\delta_{ij}$ for $j\in I^c$, which is analogous to the statement ${\it (2)}$  of the following Proposition.

\vspace{0.1cm}

In the rest of the section, we will use the following notation.

\vspace{0.1cm}

Given that $J\subsetneqq\{1,...,n+1\}$, we denote by $\CC_o^J$ the complex-analytic germs at the origin  of
$\CC^J:=\{(x_1,...,x_{n+1})\in \CC^{n+1}:\;x_i=0\;\mbox{if}\;i\notin J\}$  and by $f_J$ (resp. $F_J$) the natural restriction of $f$ (resp. $F$) to $\CC^J_o$ (resp.   $\CC_0^J\times \CC_{o}^{m}$). Let $V_J$ be the subset of $\CC_{o}^J$ defined by the equation $f_J(x)=0$.

Let $\suppd{F}{f}:=\supp{F}\setminus \supp{f}$.

\begin{proposition} \label{pro:(b,1)-d} Fix a set $I\subsetneqq \{1,...,n+1\}$. Let us assume that $F$ is a
$\mu$-constant deformation of $f$,  and that $\suppd{F_s}{f}\cap \RR^{I}_{>0}\neq \emptyset$.

Then given
$$
I\subset J\subsetneqq \{1,..,n+1\},
$$
at least one of the following conditions is satisfied.

\begin{enumerate}
 \item $f_J$ is reduced, $V_J$  is a hypersurface of $\CC_{0}^J$ with an isolated singularity at $o$ and $F_J$ is a $\mu$-constant deformation of $f_J$.

 \item  There exists  $i\in J^c$ and  $\beta:=(\beta_1,...,\beta_{n+1})\in\supp{F}$ such that  $\beta_i=\delta_{ij}$,  for $j\in J^c$.
\end{enumerate}

 \end{proposition}

A difference between the degenerate {and the non-degenerate cases is that the point $\beta\in \supp{F_s}$ of the previous proposition need not, in general, belong to the set  $\supp{f}$.

 \begin{example}
  Consider the following deformation

  $$F(x_1,x_2,x_3,s):=x_1^5+x_2^6+x_3^5+x_2^3x_3^2+2sx_1^2x_2^2x_3+s^2x_1^4x_2.$$
  In the article  \cite{Alt87} it is shown that $F$ is a $\mu$-constant degenerate deformation of the non-degenerate polynomial $f(x_1,x_2,x_3):=x_1^5+x_2^6+x_3^5+x_2^3x_3^2$ . In this example we have that $\Verd{F_s}{f}:=\{(4,1,0)\}\subset \RR_{>0}^{\{1,2\}}$ and $\beta:=(2,2,1)$.  Observe that $\beta\notin \supp{f}$.
  \end{example}

\begin{proof}[Proof of the Proposition \ref{pro:(b,1)-d}] There is not loss of generality in supposing that $J= \{1,...,k\}$, $k\leq n$.  We can always write $F$ in the following manner:

 \begin{center}
 $F(x_1,...,x_{n+1},s)=    G(x_1,..,x_{k},s)+\sum_{k<i}x_iG_{i}(x_1,...,x_k,s)+\sum_{k<i\leq j}x_ix_jG_{ij}(x_1,...,x_{n+1},s),$
 \end{center}
 where $s=(s_1,...,s_m)$. Observe that $F_J=G$, and  let $g:=f_J=G|_{s=0}$. Let us suppose that  $(2)$ is not satisfied, then   $G_{i}(x_1,...,x_k,s)\equiv 0$ for all $k<i\leq n+1$, then

 \begin{center}
  $F(x_1,...,x_{n+1},s)=    G(x_1,..,x_{k},s)+\sum_{k<i\leq j}x_ix_jG_{ij}(x_1,...,x_{n+1},s).$
 \end{center}

So we obtain that:

\begin{enumerate}
 \item $\partial_{l}F=\partial_{l}G+ \sum_{k<i\leq j}x_ix_j\partial_lG_{ij},$ for $1\leq l\leq k$,
 \item $\partial_{l}F=\sum_{k<i\leq l}x_iG_{il}+\sum_{l\leq j}x_jG_{lj}+\sum_{k<i\leq j}x_ix_j\partial_lG_{ij},$ for
  $k<l$,
  \item $\partial_{s_{j'}}F=\partial_{s_{j'}}G+ \sum_{k<i\leq j}x_ix_j\partial_{s_{j'}}G_{ij},$ for $1\leq j'\leq m$.
\end{enumerate}
 Let us suppose that the singularity of $g(x)=G(x_1,..,x_{k},0)$  is not   isolated  in the origin $o$,  or  $g\equiv 0$, or $g$ not reduced.  Then for each open set  $o\in \Omega\subset \CC^{k}$ there exists $(p_1,...,p_{k})\in \Omega$ such that:

\begin{enumerate}
 \item[(i)] $g(p_1,...,p_{k})=0,$
 \item[(ii)]
$\partial_{l}g(p_1,...,p_{k})=0$, for $1\leq l\leq k$.
\end{enumerate}

Then $(p_1,...,p_{k},0,...,0)\in  \CC^{n+1}$  is a singularity of $f$, which is a contradiction.

Let us suppose that $G(x_1,...,x_{k},s)$  is not a $\mu$-constant deformation of $g$.  Then by virtue of theorem \ref{the:val-crit} there exists $1\leq j\leq m$, and an analytic curve

 \begin{center}
  $\gamma(t):=(t^{r_1}a_1(t),...,t^{r_{k}}a_{k}(t),t^{q_1}b_1(t), ...,t^{q_m}b_m(t)),\hspace{0.1cm}r_i, q_i\in \ZZ_{>0},$
 \end{center}
such that:
\begin{center}
 $\ord_t\partial_{s_j}G\circ \gamma(t)\leq \min_{1\leq i\leq k}\{\ord_t\partial_{i}G\circ\gamma(t)\}.$

\end{center}

Let us consider the following analytic curve:
\begin{center}
 $\beta(t):=(t^{r_1}a_1(t),...  ,t^{r_{n+1}}a_{n+1}(t),t^{q_1}b_1(t), ...,t^{q_m}b_m(t)).$
\end{center}

Using the equations $\rm (1),\; (2)$ and $\rm (3)$, we observe that we can choose the large enough $r_{k+1}, ...r_{n+1}$, and the $a_{k+1}(t),...a_{n+1}(t)$, which are general enough in the manner that:

\begin{enumerate}
 \item $\ord_t \partial_{s_j}F\circ \beta(t)=\ord_t\partial_{s_j}G\circ \gamma(t)$ for $1\leq j\leq m$,
 \item $\ord_t\partial_{i}F\circ\beta(t)=\ord_t\partial_{i}G\circ\gamma(t)$ for $1\leq i\leq k$,
 \item $\ord_t \partial_{l}F\circ\beta(t)\geq \max_{1\leq i\leq k}\{\ord_t \partial_{i}F\circ\beta(t)\}$ for $k<l$.
\end{enumerate}
This implies that
\begin{center}
 $\displaystyle \ord_t\partial_{s_j}F\circ \beta(t)\leq \min_{1\leq i\leq n+1}\{\ord_t\partial_{i}F\circ\beta(t)\}$.
\end{center}
This contradicts Theorem \ref{the:val-crit} since $F$ defines a $\mu$-constant deformation. Then  $G(x_1,...,x_{k},s)$  is  a $\mu$-constant deformation of $g$ or there exists at least one non-zero $G_i$.
\end{proof}

\section*{Acknowledgments}
The authors would like to thank the anonymous referees for their valuable comments which helped to improve the manuscript. The first author would also to thank Rachel
Rogers for her unconditional support during this time.


\begin{thebibliography}{PTW02} 
\bibitem[Arn04]{Arn04}
Vladimir I. Arnold.
\newblock Arnold's Problems.
\newblock {\em Springer-Verlag, Berlin-Heidelberg-New York {\&} PHASIS},
ISBN:3-540-20748-1: 639 p, 2005.

\bibitem[Abd16]{Abd16}
Ould~M. Abderrahmane.
\newblock On deformation with constant {M}ilnor number and {N}ewton polyhedron.
\newblock {\em Math. Z.}, 284(1-2):167--174, 2016.


\bibitem[Alt87]{Alt87}
Klaus Altmann.
\newblock Equisingular deformations of isolated {$2$}-dimensional hypersurface
  singularities.
\newblock {\em Invent. Math.}, 88(3):619--634, 1987.

\bibitem[BA07]{BA07}
C. Bivia-Ausina.
\newblock Mixed Newton numbers and isolated complete intersection singularities. \newblock {\em Proc. Lond. Math. Soc.}, 94(3): 749--771, 2007.

\bibitem[BS75]{BrSp75}
J.~Brian\c{c}on and J.P. Speder.
\newblock {La trivialit{\'e} topologique n'implique pas les conditions de
  Whitney}.
\newblock {\em C. R. Acad. Sci. Paris S{\'e}r. I Math.}, A-B280(6):365--367,
  1975.

  \bibitem[BKW19]{BKW19}
  S. Brzostowski, T. Krasi\'{n}ski and J. Walewska.
\newblock {Arnold’s problem on monotonicity of the Newton number for surface singularities}.
\newblock {\em J. Math. Soc. Japan}, 71(4): 1257--1268, 2019.

\bibitem[Fur04]{Fur04}
Masako Furuya.
\newblock Lower bound of {N}ewton number.
\newblock {\em Tokyo J. Math.}, 27(1):177--186, 2004.

\bibitem[Gre86]{Gre86}
Gert-Martin Greuel.
\newblock {Constant {M}ilnor number implies constant multiplicity for
  quasihomogeneous singularities}.
\newblock {\em Manuscripta Math.}, 56(2):159--166, 1986.

\bibitem[Ish07]{Ish07a}
Shihoko Ishii.
\newblock {A Resolution of Singularities of toric variety and non-degenerate
  hypersurface}.
\newblock In {\em {Singularities in Geometry and Topology, Proceedings of the
  Trieste Singularity Summer School and Workshop, ICTP, Trieste, Italy, 2005}},
  pages 354--369. 2007.

\bibitem[Kou76]{Kou76}
A.~G. Kouchnirenko.
\newblock {Poly{\`e}dres de {N}ewton et nombres de {M}ilnor}.
\newblock {\em Invent. Math.}, 32(1):1--31, 1976.

\bibitem[LA18]{Ley18}
Maximiliano Leyton-\'Alvarez.
\newblock Deforming spaces of {$m$}-jets of hypersurfaces singularities.
\newblock {\em J. Algebra}, 508:81--97, 2018.

\bibitem[LDR76]{LDRa76}
Tr{\'a}ng L{\^e}~D{\~u}ng and C.~P. Ramanujam.
\newblock {The invariance of {M}ilnor's number implies the invariance of the
  topological type}.
\newblock {\em Amer. J. Math.}, 98(1):67--78, 1976.

\bibitem[LDS73]{LDSa73}
Tr\'{a}ng L\^{e} D\~{u}ng and Kyoji Saito.
\newblock La constance du nombre de {M}ilnor donne des bonnes stratifications.
\newblock {\em C. R. Acad. Sci. Paris S\'{e}r. A-B}, 277:A793--A795, 1973.

\bibitem[Mat12]{Mat12}
John Mather.
\newblock Notes on topological stability.
\newblock {\em Bull. Amer. Math. Soc. (N.S.)}, 49(4):475--506, 2012.

\bibitem[Mil68]{Mil68}
John Milnor.
\newblock {\em Singular points of complex hypersurfaces}.
\newblock Annals of Mathematics Studies, No. 61. Princeton University Press,
  Princeton, N.J.; University of Tokyo Press, Tokyo, 1968.

\bibitem[Mou17]{Mou17}
Hussein Mourtada.
\newblock Jet schemes and generating sequences of divisorial valuations in
  dimension two.
\newblock {\em Michigan Math. J.2}, 66(1):155--174, 2017.

\bibitem[Ngu20]{Ngu20}
Tat Thang Nguyen.
\newblock Uniform stable radius and Milnor number for non-degenerate isolated complete intersection singularities.
\newblock {\em arXiv:1912.10655}, 2020.

\bibitem[Oka87]{Oka87}
Mutsuo Oka.
\newblock {On the resolution of the hypersurface singularities}.
\newblock In {\em {Complex analytic singularities}}, volume~8 of {\em {Adv.
  Stud. Pure Math.}}, pages 405--436. North-Holland, Amsterdam, 1987.

\bibitem[Oka89]{Oka89}
Mutsuo Oka.
\newblock On the weak simultaneous resolution of a negligible truncation of the
  {N}ewton boundary.
\newblock In {\em Singularities ({I}owa {C}ity, {IA}, 1986)}, volume~90 of {\em
  Contemp. Math.}, pages 199--210. Amer. Math. Soc., Providence, RI, 1989.


  \bibitem[Par99]{Par99}
Adam Parusinski.
\newblock  Topological Triviality of $\mu$-constant deformations of type $f(x)+tg(x)$.
\newblock {\em Bull. London Math. Soc.} 31, 686--692, 1999.




  \bibitem[Tei73]{Tei73}
Bernard Teissier.
\newblock {Cycles {\'e}vanescents, sections planes et conditions de {W}hitney}.
\newblock In {\em {Singularit{\'e}s {\`a} {C}arg{\`e}se ({R}encontre
  {S}ingularit{\'e}s {G}{\'e}om. {A}nal., {I}nst. {\'E}tudes {S}ci.,
  {C}arg{\`e}se, 1972)}}, pages 285--362. Ast{\'e}risque, Nos. 7 et 8. Soc.
  Math. France, Paris, 1973.




  \bibitem[Tei14]{Tei14}
Bernard Teissier.
\newblock Overweight deformations of affine toric varieties and local
  uniformization.
\newblock In {\em Valuation theory in interaction}, EMS Ser. Congr. Rep., pages
  474--565. Eur. Math. Soc., Z\"{u}rich, 2014.

\bibitem[Tev14]{Tev14}
Jenia Tevelev.
\newblock On a question of {B}. {T}eissier.
\newblock {\em Collect. Math.}, 65(1):61--66, 2014.

\bibitem[Var76]{Var76}
A.~N. Varchenko.
\newblock {Zeta-function of monodromy and {N}ewton's diagram}.
\newblock {\em Invent. Math.}, 37(3), pages 253--262, 1976.

\bibitem[Voj07]{Voj07}
Paul Vojta.
\newblock {Jets via {H}asse-{S}chmidt derivations}.
\newblock In {\em {Diophantine geometry}}, volume~4 of {\em {CRM Series}},
  pages 335--361. Ed. Norm., Pisa, 2007.
\end{thebibliography}
\end{document}